\newcommand\bcdot{\ensuremath{%
  \mathchoice%
   {\mskip\thinmuskip\lower0.2ex\hbox{\scalebox{1.5}{$\cdot$}}\mskip\thinmuskip}}%
   {\mskip\thinmuskip\lower0.2ex\hbox{\scalebox{1.5}{$\cdot$}}\mskip\thinmuskip}%
   {\lower0.3ex\hbox{\scalebox{1.2}{$\cdot$}}}%
   {\lower0.3ex\hbox{\scalebox{1.2}{$\cdot$}}}%
}
\newtheorem{thm}{Theorem}\numberwithin{thm}{section}
\newtheorem{prop}[thm]{Proposition}
\newtheorem{lem}[thm]{Lemma}
\newtheorem{cor}[thm]{Corollary}
\theoremstyle{definition}
\newtheorem{rem}[thm]{Remark}
\newtheorem{dfn}[thm]{Definition}
\newcommand{\spa}{\hspace{7pt}}
\newcommand{\dd}[3]{\frac{d^{#1}#2}{d#3^{#1}}}
\newcommand{\pa}[3]{\frac{\partial^{#1} #2}{\partial #3^{#1}}}
\newcommand{\R}{\mathbb{R}}
\newcommand{\N}{\mathbb{N}}
\newcommand{\CF}{\mathbb{C}}
\newcommand{\e}{\varepsilon}
\renewcommand{\l}{\lambda}
\newcommand{\pr}[1]{#1^\prime}
\newcommand{\Norm}[2]{\left\| #2\right\|_{#1}}
\renewcommand{\L}[2]{L^{#1}(#2)}
\newcommand{\LW}[3]{L^{#1}_{#3}(#2)}
\newcommand{\Sum}[3]{\sum_{#1 = #2}^{#3}}
\renewcommand{\H}{\mathbf{H}_n}
\newcommand{\Z}{\mathbb{Z}}
\newcommand{\bracket}[2]{\left\langle #1 , #2\right\rangle}
\newcommand{\nn}{\nonumber}
\newcommand{\h}{\mathfrak{h}_n}
\newcommand{\Abs}[1]{\left| #1\right|}
\newcommand{\subbracket}[3]{{\left\langle #2, #3\right\rangle}_{ #1}}
\newcommand{\supp}{\mathop{\mathrm{supp}}}
\newcommand{\F}{\mathcal{F}}
\newcommand{\SF}[1]{\mathscr{S}(\mathbb{R}^{#1})}
\newcommand{\SD}[1]{\mathscr{S}^{\pr{}}(\mathbb{R}^{#1})}
\newcommand{\SFG}[1]{\mathscr{S}(#1)}
\newcommand{\SDG}[1]{\mathscr{S}^{\pr{}}(#1)}
\newcommand{\HG}[2]{\mathbf{H}_{#1, #2}}
\newcommand{\HA}[2]{\mathfrak{h}_{#1, #2}}
\newcommand{\HP}{\udot}
\newcommand{\Lie}[1]{\frak{#1}}
\newcommand{\Ad}{\mathop{\mathrm{Ad}}}
\newcommand{\coAd}{\mathop{\mathrm{Ad}^*}}
\newcommand{\coad}{\mathop{\mathrm{ad}^*}}
\newcommand{\MOP}[1]{\mathop{\mathrm{#1}}}
\newcommand{\I}{{\rm I}}
\newcommand{\scrX}{\mathscr{X}}
\newcommand{\scrD}{\mathscr{D}}
\newcommand{\cP}{\mathcal{P}}
\newcommand{\cQ}{\mathcal{Q}}
\newcommand{\cS}{\mathcal{S}}
\def\bfL{\mathbf{L}}
\def\cF{\mathcal{F}}
\newcommand{\Orbit}{\mathcal{O}}
\newcommand{\HS}{\mathcal{H}}
\newcommand{\Co}{\mathrm{Co}}
\newcommand{\Rspan}[1]{\mathbb{R}\text{-}\mathrm{span}\{ #1\}}
\newcommand{\subgr}{\leq}
\newcommand{\dimG}{d}
\newcommand{\M}[3]{M^{#1}_{#2}(\R^{#3})} 
\newcommand{\B}[3]{B^{#1}_{#2}(\R^{#3})}	 
\newcommand{\hB}[3]{\dot{B}^{#1}_{#2}(\R^{#3})} 
\newcommand{\DS}[5]{\mathcal{D}(\mathscr{#1}^{#2}, L^{#3}, \ell^{#4}_{#5})} 
\newcommand{\E}[3]{E^{#1}_{#2}(\R^{2 #3 + 1})}	 
\newcommand{\EE}[3]{\tilde{E}^{#1}_{#2}(\R^{2 #3 + 1})}	 
\newcommand{\wt}{\mathbf{w}}	 
\newcommand{\vt}{\mathbf{v}} 
\newcommand{\ut}{\mathbf{u}} 
\newcommand{\pt}{\mathbf{p}} 
\newcommand{\qt}{\mathbf{q}} 
\newcommand{\st}{\mathbf{s}} 
\newcommand{\hvt}{\mathbf{v}^{\H}} 
\newcommand{\hut}{\mathbf{u}^{\H}} 
\newcommand{\dut}{\mathbf{u}^D} 
\newcommand{\hdut}{\mathbf{u}^{hD}} 
\begin{document}
\title[$\H$-Modulation Spaces]{Heisenberg-Modulation Spaces \\ at the Crossroads of \\ Coorbit Theory and Decomposition Space Theory}

\author[V. Fischer]{V\'{e}ronique Fischer}
\address{Department of Mathematical Sciences, University of Bath, North Rd, Bath BA2 7AY, UK}
\email{v.c.m.fischer@bath.ac.uk}

\author[D. Rottensteiner]{David Rottensteiner}
\address{Faculty of Mathematics, University of Vienna,
Oskar-Morgenstern-Platz 1, 1090 Vienna, Austria}
\email{david.rottensteiner@univie.ac.at}

\author[M. Ruzhansky]{Michael Ruzhansky}
\address{Department of Mathematics, Imperial College London, London, 180 Queen's Gate, 
SW7 2AZ, United Kingdom}
\email{m.ruzhansky@imperial.ac.uk}

\subjclass[2010]{42B35, 22E25, 22E27}
\keywords{Nilpotent Lie group, Heisenberg group, meta-Heisenberg group, Dynin-Folland group, square-integrable representation,
 Kirillov theory, flat orbit condition, modulation space, Besov space, coorbit theory, decomposition space}

\begin{abstract}

We show that generalised time-frequency
shifts on the Heisenberg group $\mathbf{H}_n \cong \mathbb{R}^{2n+1}$, realised as a unitary
irreducible representation of a nilpotent Lie group acting on $L^{2}(\mathbf{H}_n)$, give rise to
a novel type of function spaces on $\mathbb{R}^{2n+1}$.
The representation we employ is the generic unitary
irreducible representation of
the $3$-step nilpotent Dynin-Folland group.

In doing so, we answer the question
whether representations of nilpotent Lie groups ever yield coorbit spaces distinct
from the classical modulation spaces $M^{\mathbf{p}, \mathbf{q}}_{\mathbf{v}}(\mathbb{R}^{2n+1}), n \in \mathbb{N}$, and also introduce
a new member to the zoo of decomposition spaces.

As our analysis and proof of novelty make heavy use of coorbit theory and decomposition space
theory, in particular Voigtlaender's recent contributions to the latter,
we give a complete classification of the unitary
irreducible representations of the Dynin-Folland group and also
characterise the coorbit spaces related to the non-generic unitary irreducible representations.

\end{abstract}

\maketitle


\section{Introduction}

The modulation spaces $\M{\pt, \qt}{\vt}{n}$, introduced in Feichtinger~\cite{Fei83}, are the prototypical and by far most well-studied examples of function spaces induced by a square-integrable unitary irreducible representation of a nilpotent Lie group. The representation which is used is the Schr\"{o}dinger representation of the Heisenberg group $\H$, but it often comes in disguise as the so-called time-frequency shifts $f \mapsto e^{2 \pi i \omega t} \hspace{1pt} f(t + x)$ on $\R^n$. Other frequent function spaces like the homogeneous Besov spaces $\hB{\pt, \qt}{\st}{n}$ also permit a representation-theoretic description, and a common framework for such representation-theoretic descriptions, the so-called coorbit theory, was developed in Feichtinger and Gr\"{o}chenig~\cite{fegr89}.

This paper is motivated by the following questions. Are there any square-integrable unitary irreducible representations of connected, simply connected nilpotent Lie groups, realised as some kind of generalised time-frequency shifts on $\R^n$, which give rise to function spaces, specifically coorbit spaces, on $\R^n$ that are different from the modulation spaces $\M{\pt, \qt}{\vt}{n}$? (As one may suspect, they should also be different from the homogeneous Besov spaces and other wavelet coorbit spaces.) And if we suspect there are, then precisely how can we prove their distinctness?
Unfortunately, coorbit theory itself does not offer any standard tools to compare or distinguish the function spaces within its framework. However, the prominent examples $\M{\pt, \qt}{\vt}{n}$ and $\hB{\pt, \qt}{\st}{n}$ also happen to be decomposition spaces and the recent 188-page paper Voigtlaender~\cite{vo16-1} provides novel methods of comparing them.  

Voigtlaender's machinery shall be the backbone for the introduction of a new family of function spaces which permit a thorough analysis from a representation-theoretic as well as from a decomposition space-theoretic viewpoint. The generalised time-frequency shifts we use are given by the projective generic representation of a specific connected, simply connected $3$-step nilpotent Lie group first considered in Dynin~\cite{Dyn1}. Its Lie algebra is generated by the left-invariant vector fields on the Heisenberg group $\H$ and multiplication by the $2n+1$ coordinate functions. Not surprisingly, the group turns out to be a semi-direct product $\R^{2n+2} \rtimes \H$. In his paper Dynin makes use of its projective generic representation to develop a Weyl-type quantization on $\H$. As Dynin was motivated by the quantization, his account on the group and its generic representation was not very explicit. Folland mentioned the paper \cite{Dyn1} by Dynin 
in a miscellaneous remark in his monograph \cite[p.~90]{FollPhSp}, 
saying that the group might be called 
`the Heisenberg group of the Heisenberg group'.
Some years later, in \cite{FollMeta}, 
Folland provided a rigorous account on such $3$-step nilpotent Heisenberg constructions, which he the called `meta-Heisenberg groups' of the underlying $2$-step nilpotent groups. 
Paying tribute to both its first introduction by Dynin 
and its explicit description by Folland, 
we will call it the \emph{Dynin-Folland group} $\HG{n}{2}$.

Since the generic representation of $\HG{n}{2}$ seems different enough to induce a new class of function spaces on $\R^{2n+1} \cong \H$, we give a description of $\HG{n}{2}$ and all its unirreps, classified via Kirillov's orbit method. This enables us to study the coorbit spaces of mixed $L^{\pt, \qt}$-type under the generic representation of $\HG{n}{2}$, which we call Heisenberg-modulation spaces, and under all the other representations. We thereby provide a complete classification and characterisation of all the coorbit spaces related to $\HG{n}{2}$. Our analysis is based on a merger of the coorbit-theoretic and decomposition spaces-theoretic desciptions of the spaces, which is owed to the semi-direct product structure of $\HG{n}{2} = \R^{2n+2} \rtimes \H$.

Let us remark that we are not the first to consider coorbits under representations of nilpotent Lie groups other than the Heisenberg group. In their paper~\cite{BeltitaModSpNil}, Ingrid and Daniel Belti\c{t}\u{a} have used Kirillov's orbit method and the work of Niels Vigand Pedersen (cf.~\cite{PedGeomQuantUEA, PedWeylNilpot}) to define `modulation spaces for unitary irreducible representations of general nilpotent Lie groups.' Although some of their methods are inspired by coorbit theory, their approach does not make explicit use of it because their representations are not necessarily square-integrable. The resulting spaces, some of which are coorbits, are called modulation spaces and the classical unweighted spaces $\M{\pt, \qt}{}{n}$ are among them. However, the spaces are used as an abstract tool since the focus of \cite{BeltitaModSpNil} and a series of subsequent papers (cf.~\cite{BeltitaMag, BeltitaContMag, BeltitaAlgSymbols}) lies on the mapping properties of Weyl-Pedersen-quantized pseudodifferential operators rather than function space theory. The papers give in fact no proof that any of the spaces are different from the modulation spaces $\M{\pt, \qt}{}{n}$ and neither is there a concrete description of any particular example. We therefore do not draw any more attention to this connection than mentioning that the unweighted versions of our new spaces are among the abstract spaces defined by \cite[Def.~2.15]{BeltitaModSpNil}.

\medskip

The paper is organised as follows. In Section~\ref{Section_HG} we briefly recall some facts about the Heisenberg group $\H$ and the Schr\"{o}dinger representation, which will be crucial for our approach to the Dynin-Folland group $\HG{n}{2}$ and for studying of the coorbit spaces related to $\HG{n}{2}$.

In Section~\ref{Dynin_Alg_Group} we give an explicit and elementary construction of the Dynin-Folland Lie algebra and group as well as the group's generic unitary irreducible representations.

In Section~\ref{Section_Classification_Orbit_Method} we classify all the unitary irreducible representations of the Dynin-Folland group by Kirillov's orbit method. As a by-product we show that all of them are square-integrable modulo the respective projective kernels and we provide convenient descriptions of the corresponding projective representations.

In Section~\ref{Section_HM} we introduce the (unweighted) Heisenberg-modulation spaces $\E{\pt, \qt}{}{n}$. Our first definition is intended to be rather intuitive and in analogy to the definition of modulation spaces in Gr\"{o}chenig's monograph \cite{gr01}: the $\E{\pt, \qt}{}{n}$-norms are computed as the mixed $L^{\pt, \qt}$-norms over $\HG{n}{2}/Z(\HG{n}{2})$ of the matrix coefficients of the generalised time-frequency shifts given by the projective generic representation of $\HG{n}{2}$ which is parameterised by `Planck's constant' $\lambda = 1$ . Our concrete realisation of the representation is equivalent to intertwining Dynin's representation with the Euclidean Fourier transform. The representation thus acts by $\H$-frequency shifts on $\widehat{\R}^{2n+1}$.

In Section~\ref{DS_E} we introduce a class of decomposition spaces on $\R^{2n+1}$ whose frequency covering of $\widehat{\R}^{2n+1}$ is governed by a discrete lattice subgroup of $\H$. These spaces are the natural candidates for an equivalent decomposition space-theoretic description of the Heisenberg-modulation spaces $\E{\pt, \qt}{}{n}$.

The main results of this paper are proved in Section~\ref{CS_E}.  In Subsection~\ref{GenCoorbits} we introduce the coorbits of mixed $L^{\pt, \qt}$-type under the generic representations of the Dynin-Folland group, which are equipped with a reasonable class of weights. We observe that the unweighted coorbits coincide with the Heisenberg-modulation spaces $\E{\pt, \qt}{}{n}$ from Section~\ref{Section_HM}, which motivates a second, more general definition of Heisenberg-modulation spaces $\E{\pt, \qt}{\st}{n}$. In Subsection~\ref{CS_DF} we provide a complete classification of all the coorbits related to $\HG{n}{2}$, which implies that all but the generic coorbits are modulation spaces over $\R^n$ or $\R^1$. Finally, in Subsection~\ref{Novelty} we show the equality of the Heisenberg-modulation spaces and the decomposition spaces from Subsection~\ref{DS_E}. Combining the decomposition-space description with the novel machinery of Voigtlaender~\cite{vo16-1} we prove the distinctness of the Heisenberg-modulation spaces $\E{\pt, \qt}{\st}{n}$ from the well-known classes of modulation spaces and (homogeneous as well as inhomogeneous) Besov spaces on $\R^{2n+1}$

\medskip

\noindent\textbf{Convention.}
The authors make the choice to use several letters in the latin alphabet multiple times each; distinct notions may be assigned the same letter yet in a different (distinguishable) font each. This serves the purpose of coherence with the several important sources on which this paper draws.

\section{The Heisenberg Group} \label{Section_HG}

In this section we recall some facts about the Heisenberg group $\H$ which will be crucial for our approach to the Dynin-Folland group $\HG{n}{2}$ and the study of the coorbit spaces related to $\HG{n}{2}$. We construct the Heisenberg group $\H$ 
as the meta-Heisenberg of $\R^n$, that is, following Folland~\cite{FollMeta}.
We will  see how the Schr\"{o}dinger representations is the natural representation via this construction and we will give some explicit formulas for the left- and right- invariant vector fields which will be needed later on.

\subsection{A Realisation of $\H$} 
\label{Subsection_Construction_HG}

Let us define the operators $P_j$ and $Q_k$, $j, k = 1, \ldots, n$,
acting on the Schwartz space $\SFG{\R^n}$ by 
\begin{eqnarray}
P_j f(x) &:=& \frac{1}{2 \pi i} \pa{}{f}{x_j} (x), \label{MomOp}
\\
Q_k f(x) &:=& x_k f(x), \label{PosOp}
\end{eqnarray}
where $f\in \SFG{\R^n}$ and $x\in\R^n$.

One checks easily that these operators are formally self-adjoint and that for any $j, k = 1, \ldots, n$,
\begin{equation}
[P_j,P_k] = [Q_j,Q_k] = 0, \spa [P_j,Q_k] = \frac{\delta_{j,k}}{2 \pi i} \hspace{2pt} \I,
 \label{CCR}
\end{equation}
where $\I$ denotes the identity operator.
Here we have used the usual convention 
that the commutator of two operators $A,B$ acting on $\SFG{\R^n}$
is $[A,B]:=AB-BA$.
The relations \eqref{CCR} are called 
the \emph{Canonical Commutation Relations} (CCR) or Heisenberg Commutation Relations.

Let us denote by $\langle 2 \pi i \hspace{2pt} P_j, 2 \pi i \hspace{2pt} Q_k \rangle$ the Lie algebra (over $\R$) of skew-adjoint operators on $\SFG{\R^n}$ generated by the operators $2 \pi i \hspace{2pt} P_j$ and $2 \pi i \hspace{2pt} Q_k$ (with Lie bracket given by the commutator bracket).
The CCR  show that 
$$
 (2 \pi i)^{-1} \hspace{2pt} \langle 2 \pi i \hspace{2pt} P_j, 2 \pi i \hspace{2pt} Q_k \rangle = 
\R P_1\oplus\ldots\oplus \R P_n\oplus 
\R Q_1\oplus\ldots\oplus \R Q_n\oplus \R \I,
$$
This Lie algebra
has dimension $2n+1$ and is 2-step nilpotent.
Moreover $\langle 2 \pi i \hspace{2pt} P_j, 2 \pi i \hspace{2pt} Q_k \rangle$
is isomorphic to the Heisenberg Lie algebra $\h$,
whose definition we now recall.

\begin{dfn}
The \emph{Heisenberg Lie algebra} $\h$ is
the real Lie algebra with underlying vector space $\R^{2n+1}$ 
endowed with the Lie bracket defined by
\begin{equation}
j,k = 1, \ldots n,
\ \left.
\begin{array}{r}
[X_{p_j},X_{p_k}] = [X_{q_j},X_{q_k}] = [X_{p_j},X_t] = [X_{q_j},X_t] 
= 0,
\\
 \ [ X_{p_j} , X_{q_k}] 
= \delta_{jk} X_t,
\end{array}\right\}
\label{Lie_Bracket_as_CCR}
\end{equation}
where 
$(X_{p_1}, \ldots, X_{p_n}, X_{q_1}, \ldots, X_{q_n}, X_t)$
denotes the standard basis of $\R^{2n+1}$.
\end{dfn}

The canonical Lie algebra isomorphism 
between $\langle 2 \pi i \hspace{2pt} P_j, 2 \pi i \hspace{2pt} Q_k \rangle$ and $\h$
 is  
\begin{equation}
\label{def_drho}
d\rho:\h \longrightarrow  \langle 2 \pi i \hspace{2pt} P_j, 2 \pi i \hspace{2pt} Q_k \rangle
\end{equation}
defined  by
$$
 d\rho(X_{p_j}) = 2 \pi i \hspace{2pt}P_j,
\quad d\rho (X_{q_k}) = 2 \pi i \hspace{2pt} Q_k, \quad j,k=1,\ldots,n
\quad
\mbox{and}\  d\rho(X_t) = 2 \pi i \hspace{2pt} \I.
$$

The Lie algebra $\h$ is  nilpotent of step 2
and its centre is $\R X_t$.
In standard coordinates
$$
(p, q, t) := (p_1, \ldots, p_n, q_1, \ldots, q_n, t),
$$
and similarly for $(\pr{p}, \pr{q}, \pr{t})$,
its Lie bracket given by  \eqref{Lie_Bracket_as_CCR} becomes
	\begin{align}
		[(p, q, t),(\pr{p},\pr{q},\pr{t})]\, := \,(0, 0, p\pr{q} - q\pr{p}) \label{Lie_Bracket_Coordinates}
	\end{align}
if $p\pr{q}$ abbreviates the standard inner product of $p$ and $\pr{q}$ on $\R^n$.

The \emph{Heisenberg group} $\H$ is 
the connected, simply connected Lie group 
corresponding to the Heisenberg Lie algebra $\h$.

Hence $\H$ is a nilpotent Lie group of step 2 and its centre is $\exp (\R X_t)$.
The group law of $\H$ may be given by the Baker-Campbell-Hausdorff formula, which we now recall for a general Lie group $G$ and corresponding Lie algebras $\Lie{g}$
(see, e.g., \cite[p.11,12]{CorwinGreenleaf}).
It reads
	\begin{align}
		\exp_{G}(X) \odot_{G} \exp_{G}(Y) &= \exp_{G}(X + Y + \frac{1}{2} [X, Y]_{\Lie{g}} + \frac{1}{12} ( [X, [X, Y]_{\Lie{g}}]_{\Lie{g}} \nn \\ 
		&\qquad - [Y, [X, Y]_{\Lie{g}}]_{\Lie{g}}) - \frac{1}{24} [Y, [X, [X, Y]_{\Lie{g}}]_{\Lie{g}}]_{\Lie{g}} + \ldots). \label{BCH-Formula}
	\end{align}
This formula always holds at least on a neighbourhood of the identity of $G$
and in fact whenever the series on the right hand side converges.
If $G$ is a connected simply connected nilpotent Lie group,
the exponential mapping $\exp_G:\Lie{g}\rightarrow G$ 
is a bijection and \eqref{BCH-Formula} holds on $\Lie{g}$ since the series on the right hand side is finite.
For the case $\Lie{g} = \h$ it yields
	\begin{align}
		\exp_{\H}(X) \odot_{\H} \exp_{\H}(Y) 
		= \exp_{\H}\left(X + Y +\frac{1}{2} [X,Y]\right) 
		\label{Heisenberg_Group_Law_BCH}
	\end{align}
for all $X, Y$ in $\h$. 

We now realise the Heisenberg group $\H$ using exponential coordinates. This means that we identify an element of $\H$ with an element of $\R^{2n+1}$ via
	\begin{align*}
		(p, q, t) = \exp_{\H}\bigl( \Sum{j}{1}{n} (p_j X_{p_j} + q_j X_{q_j}) + t X_t \bigr).
	\end{align*}
Hence, using this identification, 
the centre of $\H$ is $\{(0,0,t) \mid t\in \R\}$ and
the group law given by \eqref{Heisenberg_Group_Law_BCH} 
becomes
	\begin{align}
		(p, q, t) \odot_{\H} (\pr{p},\pr{q},\pr{t}) = \bigl( p+\pr{p} ,q+\pr{q}, t+\pr{t}+\frac{1}{2}(p\pr{q}-q\pr{p}) \bigr). \label{Heisenberg_Group_Law_Coordinates}
	\end{align}

Since the $\H$-Haar measure coincides with the Lebesgue measure on $\R^{2n+1}$, we can make further use of the latter coordinates and write the $\H$-Haar measure as $dp \, dq \, dt$. Consequently, $\L{\mathbf{r}}{\H} \cong \L{\mathbf{r}}{\R^{2n+1}}$ for all $\mathbf{r} \in \R^+$.

Furthermore, the identification $\H\cong \R^{2n+1}$ allows us to define 
$\SFG{\H} \cong \SFG{\R^{2n+1}}$.

\subsection{Left-Invariant Vector Fields} 
\label{LVF_Sublap}

Let us recall the definitions of 
 the left and right regular representations of
 an arbitrary unimodular Lie group $G$
 on $L^2(G)$: 

\begin{dfn} \label{LRRegRep}
The representations $L$ and $R$ of $G$ on $L^2(G)$ 
defined by
 $$
 \left(L(g) f\right) (g_1)=f(g^{-1} g_1) \quad\mbox{and}\quad
 \left(R(g)f\right) (g_1)=f(g_1g),
\quad g,g_1\in G, \ f\in L^2(G),
 $$
 are called the \emph{left and right regular representations} of $G$ on $L^2(G)$,
 respectively.
\end{dfn}

Naturally, the left and right regular representations of $G$ on $L^2(G)$ are unitary
and their infinitesimal representations yield the isomorphisms between
the Lie algebra of $G$ and 
the Lie algebra of the smooth right- and left-invariant vector fields on $G$, respectively.
More precisely, 
the left-invariant vector field $d R(X)$
corresponding to a vector $X \in \Lie{g}$ at a point $g \in G$ is  given by
	\begin{align*}
		d R(X) f(g) = \dd{}{}{\tau}\bigg|_{\tau = 0} f(g \exp_{\H} (\tau X)),
	\end{align*} 
for any differentiable function $f$ on $G$, 
whereas the right-invariant vector field $d L(X)$ corresponding to $X$ is given by
	\begin{align*}
		d L (X) f(g) = \dd{}{}{\tau}\bigg|_{\tau = 0} f(\exp_{\H} (-\tau X) g).
	\end{align*}

Short computations  in the case of the Heisenberg group $\H$ 
 yield the following expressions
for the left and right-invariant vector fields corresponding to the basis vectors 
$X_{p_j}, X_{q_k}, X_t$
 for $j, k = 1, \ldots n$.
The left-invariant vector fields are given by
 	\begin{align*}
		dR(X_{p_j}) = \left( \pa{}{}{p_j} - \frac{1}{2} q_j \pa{}{}{t}  \right), 
		\spa dR (X_{q_k}) = \left( \pa{}{}{q_k} + \frac{1}{2} p_k \pa{}{}{t} \right), 
		\spa dR( X_{t}) = \pa{}{}{t},
	\end{align*}
the right-invariant vector fields by
	\begin{align*}
	-dL(X_{p_j}) = \left( \pa{}{}{p_j} + \frac{1}{2} q_j \pa{}{}{t}  \right), \spa
	-dL(X_{q_k}) = \left( \pa{}{}{q_k} - \frac{1}{2} p_k \pa{}{}{t} \right), \spa 
	-dL(X_{t}) = \pa{}{}{t}.
	\end{align*}

\subsection{The Schr\"{o}dinger Representation} 
\label{Unirreps_H1n}

Here we show that there is only one possible representation of  the Heisenberg group $\H$ 
with infinitesimal representation $d\rho$
defined by \eqref{def_drho}.
This `natural' representation $\rho$ 
will turn out to be the well known 
(canonical) Schr\"{o}dinger representation of $\H$.

We start with the following three observations.
Firstly, from the group law, we have
\begin{eqnarray*}
(p,q,t)
&=&
(0,q,0)(p,0,0)(0,0,t+\frac {pq}2)
\\&=&
\exp_{\H} (q_1 X_{q_1})
\ldots
\exp_{\H} (q_n X_{q_n})
\exp_{\H} (p_1 X_{p_1})
\ldots
\exp_{\H} (p_n X_{p_n})\\
&&\qquad
\exp_{\H} ((t+\frac {pq}2)X_t).
\end{eqnarray*}
Secondly, from the definition of an infinitesimal representation,
we know that
if $d\rho$ is the infinitesimal representation of $\rho$,
then we must have
$$
\rho (\exp_{\H}(\tau X )) =e^{\tau  d\rho ( X )}
$$
for every $X\in \h, \tau\in \R$, where the right hand side is understood 
as the strongly continuous 1-parameter group of unitary operators with generator $d\rho ( X)$ defined by Stone's theorem.
Therefore, if it can be constructed, the representation $\rho$ will be characterised by the 1-parameter groups with generators
$$
d\rho (X_{p_j})= 2 \pi i \hspace{2pt} P_j = 
\frac{\partial}{\partial x_j},
\quad
d\rho ( X_{q_k}) = 2 \pi i \hspace{2pt} Q_k = \times (2 \pi i \hspace{2pt} x_k)
\quad\mbox{and}\quad
d\rho ( X_t) = 2 \pi i \hspace{2pt} \I.
$$
Thirdly, it is well known that the operators $2 \pi i \hspace{2pt} P_j, 2 \pi i \hspace{2pt} Q_k$ and $2 \pi i \hspace{2pt} \I$
are defined on $\SFG{\R^n}$ 
but have essentially skew-adjoint extensions on $L^2(\R^n)$,
and generate the 1-parameter unitary groups of operators on $L^2(\R^n)$,
$$
\bigl\{ e^{d\rho (\tau X_{p_j})} \bigr\}_{\tau\in \R}, \quad
\bigl\{ e^{d\rho (\tau X_{q_k} )} \bigr\}_{\tau\in \R},\quad 
\bigl\{ e^{d\rho (\tau X_t)} \bigr\}_{\tau\in \R},
$$ 
given respectively by
\begin{eqnarray*}
e^{d\rho (\tau X_{p_j})}f(x) &=&f(x_1,\ldots,x_j+\tau,\ldots x_n),\\
e^{d\rho (\tau X_{q_k})} f(x) &=& e^{2 \pi i \tau x_k}f(x),\\
e^{d\rho (\tau X_t)}f(x) &=& e^{2 \pi i \tau } f(x),
\end{eqnarray*}
for $f\in L^2(\R^n)$, $x\in \R^n$.

From the three observations above, 
 the unique candidate $\rho$ for a representation of $\H$
having infinitesimal representation $d\rho$ must satisfy
\begin{align*}
\rho \left(\exp_{\H} (p_j X_{p_j})\right)f(x)
&=
e^{d\rho (p_j X_{p_j})}f(x) 
=
f(x_1,\ldots, x_j+p_j, \ldots x_n),\\
\rho \left(\exp_{\H} (q_k X_{q_k})\right)f(x)
&=
e^{d\rho (q_k X_{q_k})} f(x)
=
e^{2 \pi i q_kx_k}f(x),\\
\rho \left(\exp_{\H} (t X_{t})\right)f(x)
&= 
e^{d\rho (tX_t)}f(x) 
=
e^{2 \pi it} f(x),
\end{align*}
 for $f\in \SFG{\R^n}$ and $x\in \R^n$,
and we must have
\begin{align*}
\rho(p,q,t)f(x)
&=
e^{d\rho (q_1 X_{q_1})}
\ldots e^{d\rho (q_n X_{q_n})}
e^{d\rho (p_1 X_{p_1})}
\ldots e^{d\rho (p_n X_{p_n})}
e^{d\rho ((t+\frac {pq}2)X_t)} f(x)
\\&=
e^{2 \pi i qx} \left(e^{d\rho (p_1 X_{p_1})}
\ldots e^{d\rho (p_n X_{p_n})}
e^{d\rho ((t+\frac {pq}2)X_t)}
\right) f(x)
\\
&=
e^{2 \pi i qx} \left(e^{d\rho (0,0,t+\frac {pq}2)}\right) f(x+p)
\\
&=
e^{2 \pi i qx} e^{2 \pi i(t+\frac {pq}2)} f(x+p),
\end{align*}
that is,
\begin{equation}
\rho(p,q,t)f(x)
=
 e^{2 \pi i(t+qx +\frac {pq}2)} f(x+p).
\label{def_rho}
\end{equation}

Conversely, one checks easily that 
the expression $\rho$ defined by \eqref{def_rho}
gives a unitary representation of $\H$.
In fact we recognise the so-called \emph{Schr\"{o}dinger representation} of $\H$.

Let us recall that there is an intimate connection between the Schr\"{o}dinger representation $\rho$ and the time-frequency shifts used in time-frequency analysis. Following the convention of Gr\"{o}chenig's monograph~\cite{gr01}, a generic time-frequency shift on $\L{2}{\R^n}$ is the unitary operator defined as the composition $M_q T_p$ of a translation
	\begin{align*}
		(T_p f)(x) := f(x+p) = \bigl( \rho(p, 0, 0) f \bigr)(x)
	\end{align*}
and a modulation
	\begin{align*}
		(M_q f)(x) := e^{2 \pi i qx} \hspace{2pt} f(x) = \bigl( \rho(0, q, 0) f \bigr)(x).
	\end{align*}
The family $\{ M_q T_p \}_{(p, q, 0) \in \H}$ in fact coincides with the unitarily equivalent version of the Schr\"{o}dinger representation which is realised in the natural coordinates of the semi-direct product $\H = \R^{n+1} \rtimes \R^n$ and then quotiented by the central variable $t$. We will be making use of this alternative representation from Section~\ref{Section_HM} onwards.

\subsection{The Family of Schr\"{o}dinger Representations}
\label{subsec_family_Schrodinger}

In this section 
we describe the complete family of Schr\"{o}dinger representations 
$\rho_\lambda$, $\lambda\in \R\backslash \{0\}$, of $\H$.

In this paper 
we prefer to define a Lie algebra or a Lie group 
via a concrete description (the most common realisation or the most useful for a certain purpose) 
rather than as a class of isomorphic objects given by a representative.
Indeed, we have defined the Heisenberg Lie algebra $\h$ 
via the CCR on the standard basis of $\R^{2n+1}$
and we have considered a concrete realisation of the Heisenberg group $\H$.
However, it is interesting to define other isomorphisms than $d\rho$.
Indeed, let us consider the linear mapping 
$d\rho_{\lambda}:\h \rightarrow  \langle 2 \pi i \hspace{2pt} P_j, 2 \pi i \hspace{2pt} Q_k \rangle$
defined by
$$
d\rho_{\lambda}(X_{p_j}) = 2 \pi i \hspace{2pt} P_j,
\quad d\rho_{\lambda} (X_{q_k}) = \lambda 2 \pi i \hspace{2pt} \hspace{2pt} Q_k, \quad j,k=1,\ldots,n
\quad
\mbox{and}\  d\rho_{\lambda}(X_t) =  \lambda \hspace{2pt} 2 \pi i \hspace{2pt} \I,
$$
for a fixed $\lambda\in \R\backslash\{0\}$.

Proceeding as for $\rho$,
the following property is easy to check:

\begin{lem}
\label{lem_rholambda}
For each $\lambda\in \R\backslash\{0\}$, 
the mapping $d\rho_\lambda $ is a Lie algebra isomorphism from $\h$ 
onto $\langle 2 \pi i \hspace{2pt} P_j, 2 \pi i \hspace{2pt} Q_k \rangle$.
It is the infinitesimal representation of the unitary representation $\rho_\lambda$ of $\H$ on $L^2(\R^n)$ given by
$$
\rho_\lambda (p,q,t)f(x)
=
 e^{2 \pi i \lambda(t+qx +\frac{1}{2}pq)} f(x+p),
 $$
for $f\in L^2(\R^n)$, $x\in \R^n$.

Naturally $\rho=\rho_1$.
\end{lem}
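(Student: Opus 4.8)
The plan is to follow exactly the same three-step strategy that produced $\rho$ in the previous subsection, now with the rescaled infinitesimal data $d\rho_\lambda$. First I would verify that $d\rho_\lambda$ is a Lie algebra isomorphism onto $\langle 2\pi i\, P_j, 2\pi i\, Q_k\rangle$: injectivity and surjectivity are immediate since $d\rho_\lambda$ merely scales the images of the basis vectors $X_{q_k}$ and $X_t$ by the nonzero constant $\lambda$, so it sends a basis to a basis; and it is a Lie algebra homomorphism because the only nontrivial bracket, $[X_{p_j},X_{q_k}]=\delta_{jk}X_t$, is mapped to $[2\pi i\,P_j,\lambda\,2\pi i\,Q_k]=\lambda[2\pi i\,P_j,2\pi i\,Q_k]=\lambda\,\delta_{jk}\,2\pi i\,\I=d\rho_\lambda(\delta_{jk}X_t)$ by the CCR \eqref{CCR}, while all other brackets vanish on both sides.

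Next I would reconstruct $\rho_\lambda$ from $d\rho_\lambda$ via the same factorisation used for $\rho$. By Stone's theorem the operators $d\rho_\lambda(X_{p_j})=\partial/\partial x_j$, $d\rho_\lambda(X_{q_k})=\times(\lambda\,2\pi i\,x_k)$ and $d\rho_\lambda(X_t)=\lambda\,2\pi i\,\I$ are essentially skew-adjoint on $L^2(\R^n)$ and generate the one-parameter unitary groups
\begin{align*}
e^{d\rho_\lambda(\tau X_{p_j})}f(x) &= f(x_1,\ldots,x_j+\tau,\ldots,x_n), \\
e^{d\rho_\lambda(\tau X_{q_k})}f(x) &= e^{2\pi i\lambda\tau x_k}f(x), \\
e^{d\rho_\lambda(\tau X_t)}f(x) &= e^{2\pi i\lambda\tau}f(x).
\end{align*}
Using the factorisation $(p,q,t)=(0,q,0)(p,0,0)(0,0,t+\tfrac{pq}{2})$ in $\H$ and the required identity $\rho_\lambda(\exp_\H(\tau X))=e^{\tau d\rho_\lambda(X)}$, I compute $\rho_\lambda(p,q,t)f(x)=e^{2\pi i\lambda qx}\,e^{2\pi i\lambda(t+\tfrac{pq}{2})}\,f(x+p)=e^{2\pi i\lambda(t+qx+\tfrac12 pq)}f(x+p)$, which is the claimed formula. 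Conversely, a direct check — substituting into $\rho_\lambda(p,q,t)\rho_\lambda(p',q',t')$ and comparing with $\rho_\lambda\bigl((p,q,t)\odot_\H(p',q',t')\bigr)$ using \eqref{Heisenberg_Group_Law_Coordinates} — shows this formula defines a genuine unitary representation whose differential is $d\rho_\lambda$. Finally, setting $\lambda=1$ recovers \eqref{def_rho}, so $\rho=\rho_1$.

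There is no real obstacle here; the statement is essentially a parametrised repetition of the construction of $\rho$, and the phrase ``Proceeding as for $\rho$'' in the excerpt signals that the authors intend only this routine verification. The one point deserving mild care is the cocycle bookkeeping in the converse direction: one must check that the phase factors multiply correctly, i.e. that $\lambda(t+qx+\tfrac12 pq)+\lambda(t'+q'(x+p)+\tfrac12 p'q')$ equals $\lambda\bigl((t+t'+\tfrac12(pq'-qp'))+(q+q')x+\tfrac12(p+p')(q+q')\bigr)$, which is a short algebraic identity linear in $\lambda$ and hence inherited from the $\lambda=1$ case already established. The essential skew-adjointness of the generators is standard (these are translation, multiplication by a purely imaginary linear function, and a scalar), so Stone's theorem applies verbatim.
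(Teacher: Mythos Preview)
Your proposal is correct and follows exactly the approach the paper intends: the paper gives no explicit proof, saying only ``Proceeding as for $\rho$, the following property is easy to check,'' and you have carried out precisely that routine verification --- checking that $d\rho_\lambda$ respects the single nontrivial bracket, exponentiating the rescaled generators via Stone's theorem, and assembling $\rho_\lambda$ through the same factorisation $(p,q,t)=(0,q,0)(p,0,0)(0,0,t+\tfrac{pq}{2})$ used for $\rho$.
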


The representations $\rho_\lambda$
given by Lemma \ref{lem_rholambda} 
are called the Schr\"{o}dinger
representations with parameter $\l \in \R\backslash\{0\}$.
A celebrated theorem of Stone and von Neumann says that,
up to unitary equivalence,
these are all the irreducible unitary representations of $\H$ that are nontrivial on the centre:		
		\begin{thm}[Stone-von Neumann]\label{StvNThm}
For any $\lambda\in \R\backslash\{0\}$,
the representation 
 $\rho_\lambda$ of $\H$ is unitary and irreducible.
  If $\lambda,\lambda'\in \R\backslash\{0\}$ 
		with $\lambda\not=\lambda'$,
		then
the representations $\rho_\lambda$ and $\rho_{\lambda'}$
are  inequivalent.
Moreover, if $\pi$ is an irreducible and unitary representation of $\H$ 
such that $\pi(0,0,t)=e^{i\lambda t}$ for some $\lambda\not=0$, 
then $\pi$ is unitarily equivalent to $\rho_\lambda$.
		\end{thm}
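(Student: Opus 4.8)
The plan is to prove the three assertions in turn, with the third --- uniqueness up to equivalence --- being the substantive one. For the first assertion, unitarity of each $\rho_\lambda$ is checked directly from the formula in Lemma~\ref{lem_rholambda}: the modulus $|\rho_\lambda(p,q,t)f(x)| = |f(x+p)|$ shows $\Norm{L^2(\R^n)}{\rho_\lambda(p,q,t)f} = \Norm{L^2(\R^n)}{f}$, and the group homomorphism property was already verified in establishing that $d\rho_\lambda$ is the infinitesimal representation. For irreducibility, I would use Schur's lemma in the form: if $T$ is a bounded operator commuting with every $\rho_\lambda(p,q,t)$, then $T$ is a scalar. Commuting with the modulations $\rho_\lambda(0,q,0) = M_{\lambda q}$ for all $q$ forces $T$ to be a multiplication operator $T = M_\varphi$ by some $\varphi \in L^\infty(\R^n)$ (this is the standard fact that the maximal abelian von Neumann algebra generated by all modulations is $L^\infty$); commuting with the translations $\rho_\lambda(p,0,0) = T_p$ then forces $\varphi$ to be translation-invariant, hence a.e.\ constant. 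Thus $T$ is scalar and $\rho_\lambda$ is irreducible.

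For the second assertion --- inequivalence of $\rho_\lambda$ and $\rho_{\lambda'}$ when $\lambda \neq \lambda'$ --- I would argue by the central character. The centre of $\H$ in our coordinates is $\{(0,0,t) : t \in \R\}$, and $\rho_\lambda(0,0,t) = e^{2\pi i \lambda t}\,\I$. If $U$ were a unitary intertwiner $U\rho_\lambda(g) = \rho_{\lambda'}(g)U$ for all $g \in \H$, then specialising to $g = (0,0,t)$ gives $e^{2\pi i \lambda t}U = e^{2\pi i \lambda' t}U$ for all $t$, forcing $e^{2\pi i(\lambda - \lambda')t} = 1$ for all $t \in \R$, hence $\lambda = \lambda'$. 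This is immediate.

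The heart of the theorem is the third assertion. Let $\pi$ be an irreducible unitary representation of $\H$ on a Hilbert space $\HS$ with $\pi(0,0,t) = e^{2\pi i \lambda t}\,\I$ for some $\lambda \neq 0$ (I adopt the normalisation consistent with $\rho_\lambda$; the statement as written uses $e^{i\lambda t}$, which only rescales $\lambda$). The classical approach is to introduce the integrated operator
\begin{align*}
	P := \Abs{\lambda}^n \int_{\R^n} \int_{\R^n} \pi(p, q, 0)\, e^{-\pi i \lambda\, p q}\, dp\, dq,
\end{align*}
or rather its analogue built so that $P = P^* = P^2$ is a rank-one projection onto a distinguished vector; one shows $\pi(p,q,0)\,P\,\pi(p,q,0)^{-1}$ generates $\HS$ as $(p,q)$ range over $\R^{2n}$, and then the map sending the Schr\"{o}dinger ``vacuum'' to the range of $P$ extends to a unitary intertwiner with $\rho_\lambda$. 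I would follow the streamlined route via the Weyl calculus: for $f \in L^1(\R^{2n})$ set $\pi(f) := \int f(p,q)\,\pi(p,q,0)\,e^{\pi i\lambda pq}\,dp\,dq$; the twisted-convolution algebra structure together with the central character computation shows $\pi(f)\pi(g) = \pi(f \,\natural_\lambda\, g)$ for the $\lambda$-twisted convolution, and the same identity holds for $\rho_\lambda$. Choosing $f$ to be the (twisted) idempotent corresponding to the Gaussian projection, one gets a nonzero projection $P = \pi(f)$; irreducibility forces $\dim \ran P = 1$. Picking a unit vector $v \in \ran P$ and mirroring the construction on the $\rho_\lambda$ side with the Gaussian $\varphi_0$, one defines $U$ on the dense span of $\{\pi(p,q,0)v\}$ by $U(\pi(p,q,0)v) := \rho_\lambda(p,q,0)\varphi_0$ and checks it is isometric (using $\langle \pi(p,q,0)v, \pi(p',q',0)v\rangle = \langle \rho_\lambda(p,q,0)\varphi_0, \rho_\lambda(p',q',0)\varphi_0\rangle$, which reduces to the same twisted-convolution matrix-coefficient computation on both sides), surjective (by irreducibility and the cyclicity of $v$, resp.\ of $\varphi_0$), and intertwining on the generators $(p,q,0)$ and the centre, hence on all of $\H$.

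\textbf{Main obstacle.} The delicate point is the dimension count $\dim \ran P = 1$: one must verify that the chosen $f$ really is a $\natural_\lambda$-idempotent and that $P \neq 0$ (the latter because, a priori, $\pi(f)$ could vanish even for $f \neq 0$, though the central character being nontrivial rules this out via a Plancherel/approximate-identity argument for twisted convolution). Once $P$ is a nonzero projection, irreducibility gives that $\pi$ restricted to $\ran P$ is acted on trivially by the ``diagonal'' and one deduces rank one. I would either cite this from Folland~\cite{FollPhSp} or Corwin--Greenleaf~\cite{CorwinGreenleaf}, or carry out the Gaussian computation explicitly; given the expository tone of this section, a reference with a short indication of the argument is the appropriate level of detail.
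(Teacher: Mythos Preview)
The paper does not give a proof of this theorem at all; it simply writes ``For a proof see, e.g., \cite[Ch.~1~\S~5]{FollPhSp}.'' Your sketch is precisely the argument found in that reference (unitarity by inspection, irreducibility via Schur's lemma using that modulations generate the maximal abelian algebra $L^\infty$, inequivalence via the central character, and uniqueness via the twisted-convolution Gaussian idempotent), so in that sense your approach and the paper's ``approach'' coincide.

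One small comment on the sketch itself: the step ``irreducibility forces $\dim\ran P = 1$'' is where the real content hides, and irreducibility alone does not give it. What is actually needed is that for $u,v \in \ran P$ the matrix coefficient $\xi \mapsto \langle W_\pi(\xi)u, v\rangle$ is determined by its value at $0$ (because it is annihilated on both sides by $\phi\,\natural_\lambda(\cdot)$-type relations), so if $u \perp v$ the coefficient vanishes identically, and \emph{then} cyclicity from irreducibility forces $v = 0$. You do flag this as the delicate point and propose to cite Folland for it, which matches exactly what the paper does, so there is no genuine gap.
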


For a proof see, e.g., \cite[Ch.~1~\S~5]{FollPhSp}.

It is not difficult to construct other realisations of the Schr\"{o}dinger representations defined above. For example, let us define the mapping $\tilde \rho_\lambda$ of $\H$ by
$$
\tilde \rho_\lambda(p,q,t):= 
\rho (\sqrt {|\lambda|}p, \frac{\lambda}{\sqrt{|\lambda|}}q,\lambda t).
$$
We check readily that it is a unitary irreducible representation of $\H$ on $L^2(\R^n)$. We can show easily that it is unitarily equivalent to $\rho_\l$ by direct computations or, alternatively, using the Stone-von Neumann theorem and checking that it coincides on the centre of $\H$ with the character $e^{2 \pi i\lambda \cdot}$.

The Stone-von Neumann Theorem  gives an almost complete classification of the $\H$-unirreps. In fact, we see that the only other unirreps 
which can appear are trivial at the centre. 
Passing the centre through the quotient,
those representations are now unirreps of the Abelian group $\R^{2n}$, 
hence characters of $\R^{2n}$. 
We thus have:

	\begin{thm}  [Classification of $\H$-Unirreps] \label{CharIrrUnitRep}
Every irreducible unitary representation $\rho$ of $\H$ on a Hilbert space $\HS$ is unitarily equivalent to one and only one of the following representations:
	\begin{itemize}
		\item[(a)] $\sigma_{(a,b)} \mid (p, q, t)\mapsto e^{2 \pi i(aq+bp)}$, $a,b\in\R^n$, acting on $\CF$,
		\item[(b)] $\rho_\lambda$, $\lambda \in \R \setminus \{ 0 \}$, acting on $\L{2}{\R^n}$.
	\end{itemize}
	\end{thm}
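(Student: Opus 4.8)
The plan is to prove Theorem~\ref{CharIrrUnitRep} by combining the Stone-von Neumann theorem (Theorem~\ref{StvNThm}) with a reduction argument for representations that are trivial on the center. First I would let $\rho$ be an arbitrary irreducible unitary representation of $\H$ on a Hilbert space $\HS$ and examine its restriction to the center $Z(\H) = \{(0,0,t) \mid t \in \R\}$. Since $Z(\H) \cong \R$ is central, Schur's lemma forces $\rho(0,0,t)$ to be a scalar multiple of the identity for each $t$; by unitarity and the one-parameter group property this scalar must be a continuous character of $\R$, hence of the form $\rho(0,0,t) = e^{2\pi i \mu t} \hspace{1pt} \I_{\HS}$ for some $\mu \in \R$. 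This dichotomises the problem into the cases $\mu \neq 0$ and $\mu = 0$.

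In the case $\mu \neq 0$, I would invoke Theorem~\ref{StvNThm} directly: setting $\lambda := \mu$, the final clause of the Stone-von Neumann theorem asserts that any irreducible unitary $\rho$ with $\rho(0,0,t) = e^{2\pi i \lambda t}\,\I$ is unitarily equivalent to $\rho_\lambda$, which is case~(b). Moreover the inequivalence of $\rho_\lambda$ and $\rho_{\lambda'}$ for $\lambda \neq \lambda'$ (also from Theorem~\ref{StvNThm}) guarantees that the parameter $\lambda$ is uniquely determined, and a representation of type~(b) can never be equivalent to one of type~(a) since the two act nontrivially versus trivially on the center. In the case $\mu = 0$, the kernel of $\rho$ contains $Z(\H)$, so $\rho$ factors through the quotient group $\H / Z(\H)$. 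Using the coordinates from \eqref{Heisenberg_Group_Law_Coordinates}, this quotient is $\{(p,q) \mid p, q \in \R^n\} \cong \R^{2n}$ with componentwise addition, i.e.\ an abelian Lie group. An irreducible unitary representation of a locally compact abelian group is one-dimensional and given by a character; the characters of $\R^{2n}$ are exactly $(p,q) \mapsto e^{2\pi i (aq + bp)}$ for $(a,b) \in \R^n \times \R^n$, yielding the family $\sigma_{(a,b)}$ of case~(a), acting on $\CF$. Distinct pairs $(a,b)$ give distinct characters, so uniqueness within case~(a) is immediate.

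Finally I would assemble these pieces into the ``one and only one'' statement: every irreducible unitary $\rho$ falls into exactly one of the two cases according to whether $\mu \neq 0$ or $\mu = 0$; within case~(b) the parameter $\lambda$ is pinned down by Theorem~\ref{StvNThm}; within case~(a) the parameter $(a,b)$ is pinned down by the injectivity of the character map of $\R^{2n}$; and the two cases are mutually exclusive because of their differing behaviour on $Z(\H)$. I do not expect any genuine obstacle here, since the substantive analytic content—the existence and rigidity of $\rho_\lambda$—is already packaged in the Stone-von Neumann theorem, whose proof is cited to \cite[Ch.~1~\S~5]{FollPhSp}. The only mild care needed is the routine verification that $\H/Z(\H) \cong \R^{2n}$ as a topological group (so that Pontryagin duality applies) and the bookkeeping that no type-(a) representation coincides with a type-(b) one; both are straightforward from the explicit group law.
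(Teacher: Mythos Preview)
Your proposal is correct and follows essentially the same approach as the paper: the paper itself merely remarks, just before stating the theorem, that Stone--von Neumann handles the representations nontrivial on the centre while those trivial on the centre factor through the abelian quotient $\R^{2n}$ and are hence characters. Your write-up is simply a more detailed and careful version of that same argument, with Schur's lemma and the uniqueness bookkeeping made explicit.
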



\section{The Dynin-Folland Lie Algebra and Lie Group} \label{Dynin_Alg_Group}

This section is dedicated to giving an explicit and elementary approach to the Dynin-Folland Lie algebra and group 
as well as the generic representations associated with the construction.

\subsection{The Lie Algebra $\HA{n}{2}$} \label{Section_h2n}

In this subsection we study the real Lie algebra \\
$\langle 2 \pi i \hspace{2pt} \scrD_{p_j}, 2 \pi i \hspace{2pt} \scrD_{q_k}, 2 \pi i \hspace{2pt} \scrD_t, 2 \pi i \hspace{2pt} \scrX_{p_l}, 2 \pi i \hspace{2pt} \scrX_{q_m}, 2 \pi i \hspace{2pt} \scrX_{t} \rangle$ generated by the left-invariant vector fields
\begin{equation}
\left\{ \begin{array}{rcccl}
	\scrD_{p_j} & := & (2 \pi i)^{-1} \hspace{2pt} dR(X_{p_j}) & = & (2 \pi i)^{-1} \hspace{2pt} \left( \pa{}{}{p_j} - \frac{1}{2} q_j \pa{}{}{t}  \right), \\
	\scrD_{q_k} & := & (2 \pi i)^{-1} \hspace{2pt} dR (X_{q_k}) & = &(2 \pi i)^{-1} \hspace{2pt} \left( \pa{}{}{q_k} + \frac{1}{2} p_k \pa{}{}{t} \right), \\
	\scrD_{t} & := & (2 \pi i)^{-1} \hspace{2pt} dR( X_{t}) & = & (2 \pi i)^{-1} \hspace{2pt} \pa{}{}{t},
\end{array}\right.
	\label{Left_VF_HG}
\end{equation}
%
and the multiplications by coordinate functions
\begin{equation} \label{HG_Coordinate_Mult}
\left\{ \begin{array}{rcl}
\scrX_{p_l} f\, (p,q,t)&=& p_l \hspace{1pt} f(p,q,t),\\
\scrX_{q_m} f\, (p,q,t)&=& q_m \hspace{1pt} f(p,q,t),\\
\scrX_{t} f\, (p,q,t)&=& t \hspace{1pt} f(p,q,t),\\
\end{array}\right.
\end{equation}
where $j, k, l, m = 1, \ldots n$ and $f\in \SFG{\H}$.
To this end, we compute all possible commutators between these operators, up to skew-symmetry. 
The symbol $\I$ will denote the identity operator on $\L{2}{\H}$.

Since the scalar multiplication operators commute, we have
$$
[(2 \pi i) \hspace{2pt} \mathscr{X}_{p_j}, (2 \pi i) \hspace{2pt} \mathscr{X}_{q_k}] = [(2 \pi i) \hspace{2pt} \mathscr{X}_{p_j}, (2 \pi i) \hspace{2pt} \mathscr{X}_t] = [(2 \pi i) \hspace{2pt} \mathscr{X}_{q_k}, (2 \pi i) \hspace{2pt} \mathscr{X}_t] = 0.
$$

The commutator brackets between the left invariant vector fields 
$\scrD_{p_j}, \scrD_{q_k}, \scrD_t$, for $j, k \in \{ 1, \ldots, n \}$, can be computed directly using \eqref{Left_VF_HG}:
\begin{equation*}
	\begin{array}{lcl}
(2 \pi i)^2 \, [\scrD_{p_j}, \scrD_{q_k}] & = & [\partial_{p_j} - \frac{1}{2} q_j \partial_t, \partial_{q_k} + \frac{1}{2} p_k \partial_t]  =  [\partial_{p_j}, \frac{1}{2} p_k \partial_t] + [- \frac{1}{2} q_j \partial_t, \partial_{q_k}] \\
& = & \frac{1}{2}  \delta_{j,k} \partial_t + \frac{1}{2}  \delta_{k,j} \partial_t  =  \delta_{j,k} \, \partial_t = 2 \pi i \delta_{j,k} \, \scrD_t, \\
(2 \pi i)^2 \, [\scrD_{p_j}, \scrD_t] & = & [\partial_{p_j} - \frac{1}{2} q_j \partial_t, \partial_t]  =  0, \\
(2 \pi i)^2 \, [\scrD_{q_j}, \scrD_t] & = & [\partial_{q_j} + \frac{1}{2} p_j \partial_t, \partial_t]  =  0.
	\end{array}
\end{equation*}

Naturally, we obtain that the operators $\scrD_{p_j}, \scrD_{q_k}, \scrD_t$ satisfy the CCR since the space of left-invariant vector fields on $\H$ forms a Lie algebra of operators isomorphic to $\h$ (see Section~\ref{LVF_Sublap}).
Let us compute the commutator brackets between the left-invariant vector fields and the coordinate operators, first the commutators with 
$\scrD_{p_j}$:

\begin{equation*}
	\begin{array}{lclclclcl}
(2 \pi i) \hspace{2pt} [\scrD_{p_j}, \mathscr{X}_{p_k}] & = & [\partial_{p_j} - \frac{1}{2} q_j \partial_t, p_k] & = & [\partial_{p_j}, p_k] &  = & \delta_{j,k} \, I, \\ 
(2 \pi i) \, [\scrD_{p_j}, \mathscr{X}_{q_k}] & = & [\partial_{p_j} - \frac{1}{2} q_j \partial_t, q_k] & = & 0, &&\\
(2 \pi i) \,[\scrD_{p_j}, \mathscr{X}_t] & = & [\partial_{p_j} - \frac{1}{2} q_j \partial_t, t] & = & [- \frac{1}{2} q_j \partial_t, t] & = & - \frac{1}{2} q_j & = & - \frac{1}{2} \mathscr{X}_{q_j},  
	\end{array}
\end{equation*}
then with $\scrD_{q_j}$:
\begin{equation*}
	\begin{array}{lclclclcl}
(2 \pi i) \,[\scrD_{q_j}, \mathscr{X}_{p_k}] & = & [\partial_{q_j} + \frac{1}{2} p_j \partial_t,  p_k] & = & 0, \\
(2 \pi i) \, [\scrD_{q_j}, \mathscr{X}_{q_k}] & = & [\partial_{q_j} + \frac{1}{2} p_j \partial_t,  q_k] & = & [\partial_{q_j}, q_k] & = & \delta_{j,k} \, \I,  \\
(2 \pi i) \, [\scrD_{q_j}, \mathscr{X}_t] & = & [\partial_{q_j} + \frac{1}{2} p_j \partial_t, t]  & = & [\frac{1}{2} p_j \partial_t, t] = \frac{1}{2} p_j & = & \frac{1}{2} \mathscr{X}_{p_j}, \\
	\end{array}
\end{equation*}
and eventually with $\scrD_t$:
\begin{equation*}
	\begin{array}{lclclclcl}
(2 \pi i) \, [\scrD_t, \mathscr{X}_{p_k}] & = & [\partial_t, p_k] & = & 0, \\
(2 \pi i) \, [\scrD_t, \mathscr{X}_{q_k}] & = & [\partial_t, q_k] & = & 0, \\
(2 \pi i) \, [\scrD_t, \mathscr{X}_t] & = & [\partial_t, t] & = & \I.
	\end{array}
\end{equation*}


We conclude that the linear space (over $\R$) generated by the first order commutator brackets between the operators $\scrD_{p_j}, \scrD_{q_j}, \scrD_t$ and $\scrX_{p_j}$, $\scrX_{q_k}$, $\scrX_{t}$ is $(2 \pi i)^{-1}$ times
	\begin{align*}
\R \scrD_t \oplus \R  \I \oplus \R \scrX_{q_1}\oplus \ldots \oplus \R \scrX_{q_n}\oplus \R \scrX_{p_1}\oplus \ldots \oplus \R \scrX_{p_n}.		
	\end{align*}

The whole lot of commutators tells us that very few second order commutators remain. 
More precisely, the Lie brackets of $\scrD_t$, $\scrX_{p_j}$ or $\scrX_{q_k}$ with any $\scrD_{p_{j'}}, \scrD_{q_{j'}}, \scrD_t$ and 
$\scrX_{p_{j'}}$, $\scrX_{q_{k'}}$, $\scrX_{t}$
 can only vanish or be equal to $\I$, and  the operator $\I$ clearly commutes with all operators, hence does not create any new structure.
Therefore, the second order commutator brackets are all proportional to $\I$ and all third order commutators must  be zero.
We have obtained:

\begin{lem}
The real Lie algebra $\langle 2 \pi i \hspace{2pt} \scrD_{p_j}, 2 \pi i \hspace{2pt} \scrD_{q_k}, 2 \pi i \hspace{2pt} \scrD_t, 2 \pi i \hspace{2pt} \scrX_{p_l}, 2 \pi i \hspace{2pt} \scrX_{q_m}, 2 \pi i \hspace{2pt} \scrX_{t} \rangle$ generated by the operators \eqref{Left_VF_HG} and \eqref{HG_Coordinate_Mult} equals 
	\begin{align*}
		\R 2 \pi i \hspace{2pt} \scrD_{p_1}\oplus& \ldots \oplus \R 2 \pi i \hspace{2pt} \scrD_{p_n} \oplus
\R 2 \pi i \hspace{2pt} \scrD_{q_1} \oplus \ldots \oplus\R 2 \pi i \hspace{2pt} \scrD_{q_n} \oplus \R 2 \pi i \hspace{2pt} \scrD_t 
\oplus \\ \R 2 \pi i \hspace{2pt} \scrX_{p_1} \oplus& \ldots \oplus \R 2 \pi i \hspace{2pt} \scrX_{p_n} \oplus \R 2 \pi i \hspace{2pt} \scrX_{q_1}\oplus \ldots \oplus \R 2 \pi i \hspace{2pt} \scrX_{q_n} \oplus \R 2 \pi i \hspace{2pt} \scrX_{t} \oplus \R 2 \pi i \hspace{2pt} \I.
	\end{align*}
In particular, the identity operator $\I$ is the only newly generated element and spans the centre of the Lie algebra. 
Furthermore, this Lie algebra is $3$-step nilpotent and of topological dimension $2 (2n + 1) + 1$. 
\end{lem}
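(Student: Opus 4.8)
Write $V$ for the $\R$-linear span of the $2(2n+1)$ generators $2\pi i\,\scrD_{p_j},\, 2\pi i\,\scrD_{q_k},\, 2\pi i\,\scrD_t,\, 2\pi i\,\scrX_{p_l},\, 2\pi i\,\scrX_{q_m},\, 2\pi i\,\scrX_t$ ($j,k,l,m = 1,\dots,n$) together with $2\pi i\,\I$; this is exactly the right-hand side of the asserted identity, and the plan is to read everything off from the commutator table already assembled above. The first step is to verify that $V$ is closed under the commutator bracket, hence a Lie subalgebra of the algebra of operators on $L^2(\H)$. Inspecting the computations: the brackets among $2\pi i\,\scrD_{p_j}, 2\pi i\,\scrD_{q_k}, 2\pi i\,\scrD_t$ are multiples of $2\pi i\,\scrD_t$ (just the CCR for the left-invariant vector fields, cf.\ Section~\ref{LVF_Sublap}); the brackets among $2\pi i\,\scrX_{p_l}, 2\pi i\,\scrX_{q_m}, 2\pi i\,\scrX_t$ all vanish; the mixed brackets lie in $\mathrm{span}_{\R}\{2\pi i\,\I,\, 2\pi i\,\scrX_{p_1},\dots,2\pi i\,\scrX_{p_n},\, 2\pi i\,\scrX_{q_1},\dots,2\pi i\,\scrX_{q_n}\}$; and $\I$ commutes with everything in play. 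Thus $[V,V]\subseteq V$.

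Next I would argue that $V$ coincides with the Lie algebra generated by the listed operators. Since $V$ is an $\R$-subspace of the operator Lie algebra that is closed under brackets and contains all the generators, the generated Lie algebra is contained in $V$; conversely each generator lies in it, and $2\pi i\,\I = [\,2\pi i\,\scrD_t,\, 2\pi i\,\scrX_t\,]$ is a single bracket of generators, so $V$ is contained in the generated algebra as well. Moreover the $4n+3$ operators in question are pairwise-distinct first-order differential (respectively multiplication, respectively identity) operators and are $\R$-linearly independent --- e.g.\ by testing a vanishing linear combination against constants, coordinate functions and exponentials --- so the sum displayed in the statement is direct and $\dim V = 4n+3 = 2(2n+1)+1$.

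It then remains to identify the centre and establish $3$-step nilpotency. Clearly $\R\,2\pi i\,\I \subseteq \mathfrak{z}(V)$; conversely, expanding an arbitrary central element in the basis and bracketing it successively against $2\pi i\,\scrX_t$ (which kills the $\scrD_{p_j}$-, $\scrD_{q_k}$- and $\scrD_t$-coefficients, since those brackets yield the linearly independent $2\pi i\,\scrX_{q_j}$, $2\pi i\,\scrX_{p_k}$, $2\pi i\,\I$), then against $2\pi i\,\scrD_t$ (killing the $\scrX_t$-coefficient), and finally against $2\pi i\,\scrD_{p_k}$ and $2\pi i\,\scrD_{q_m}$ (killing the $\scrX_{p_k}$- and $\scrX_{q_m}$-coefficients), one is left with a multiple of $2\pi i\,\I$. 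Hence $\mathfrak{z}(V) = \R\,2\pi i\,\I$ is one-dimensional, so $\I$ is the only newly generated element. For nilpotency I would compute the lower central series: the table shows $[V,V] = W := \mathrm{span}_{\R}\{2\pi i\,\scrD_t,\, 2\pi i\,\I,\, 2\pi i\,\scrX_{p_1},\dots,2\pi i\,\scrX_{p_n},\, 2\pi i\,\scrX_{q_1},\dots,2\pi i\,\scrX_{q_n}\}$; bracketing $V$ with any of $2\pi i\,\scrD_t$, $2\pi i\,\scrX_{p_j}$, $2\pi i\,\scrX_{q_k}$ produces only multiples of $2\pi i\,\I$, while $[V,\,2\pi i\,\I] = \{0\}$, so $[V,W] = \R\,2\pi i\,\I$ and $[V,[V,W]] = \{0\}$. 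Thus the lower central series $V \supsetneq W \supsetneq \R\,2\pi i\,\I \supsetneq \{0\}$ is nonzero at the third step and zero at the fourth, i.e.\ $V$ is $3$-step nilpotent.

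I do not expect any genuine obstacle: every bracket needed has already been computed above, and the argument is essentially bookkeeping. The one point that repays attention is the $\R$-linear independence of the listed operators --- without it neither the direct-sum assertion, the dimension count, nor the centre computation would be legitimate --- together with the discipline of using the \emph{entire} commutator table, rather than a convenient sub-collection, when reading off $[V,V]$ and $[V,W]$ in the nilpotency step.
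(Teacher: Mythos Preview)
Your proposal is correct and follows essentially the same route as the paper: both arguments amount to reading off the desired conclusions directly from the commutator table assembled just before the lemma. The paper's version is terser --- it simply observes that first-order brackets span $\R\scrD_t \oplus \R\I \oplus \R^n\scrX_p \oplus \R^n\scrX_q$, that second-order brackets are proportional to $\I$, and that third-order brackets vanish --- whereas you spell out the linear-independence check, the centre computation, and the lower central series explicitly; but these are elaborations of the same bookkeeping rather than a different strategy.
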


We now define the `abstract' Lie algebra that will naturally be isomorphic to \\ $\langle 2 \pi i \hspace{2pt} \scrD_{p_j}, 2 \pi i \hspace{2pt} \scrD_{q_k}, 2 \pi i \hspace{2pt} \scrD_t, 2 \pi i \hspace{2pt} \scrX_{p_l}, 2 \pi i \hspace{2pt} \scrX_{q_m}, 2 \pi i \hspace{2pt} \scrX_{t} \rangle$.
First we index the  standard basis of $\R^{2 (2n + 1) + 1}$ by
	\begin{align*}
(X_{u_1}, \ldots, X_{u_n}, X_{v_1}, \ldots, X_{v_n}, X_w, X_{x_1}, \ldots, X_{x_n}, X_{y_1}, \ldots, X_{y_n}, X_z, X_s).
	\end{align*}
Then we consider the linear isomorphism 
\begin{equation}
\label{def_dpi}
d\pi: \R^{2 (2n + 1) + 1} 
\longrightarrow 
\langle 2 \pi i \hspace{2pt} \scrD_{p_j}, 2 \pi i \hspace{2pt} \scrD_{q_k}, 2 \pi i \hspace{2pt} \scrD_t, 2 \pi i \hspace{2pt} \scrX_{p_l}, 2 \pi i \hspace{2pt} \scrX_{q_m}, 2 \pi i \hspace{2pt} \scrX_{t} \rangle
\end{equation}
defined by
$$
\begin{array}{l lll}
d\pi(X_{u_j}) = 2 \pi i \, \scrD_{p_j},&
d\pi(X_{v_j})= 2 \pi i \, \scrD_{q_j},&
d\pi(X_w)= 2 \pi i \, \scrD_t,\\
d\pi(X_{x_j})= 2 \pi i \, \scrX_{p_j},&
d\pi(X_{y_j})= 2 \pi i \, \scrX_{q_j},& \\
d\pi(X_z)= 2 \pi i \, \scrX_t, 
& d\pi(X_s)= 2 \pi i \, \I.\\
\end{array}
$$
	\begin{dfn}
We denote by $\HA{n}{2}$ the real Lie algebra with underlying linear space 
$\R^{2 (2n + 1) + 1}$ 
and Lie bracket $[\cdot,\cdot]_{\HA{n}{2}}$
defined such that $d\pi$ is a Lie algebra morphism.
	\end{dfn}
This means that 
the vectors in the standard basis of $\R^{2 (2n + 1) + 1}$ satisfy the 
 following commutator relations
\begin{equation}
\left\{ \begin{array}{rcr}
[X_{u_j}, X_{v_k}]_{\HA{n}{2}} &=& \delta_{j,k} \, X_w, \\
\, [X_{u_j}, X_{x_k}]_{\HA{n}{2}} &=& \delta_{j,k} \, X_s, \\
\, [X_{u_j}, X_z]_{\HA{n}{2}} &=& - \frac{1}{2} X_{y_j},  \\
\, [X_{v_j}, X_{y_k}]_{\HA{n}{2}} &=& \delta_{j,k} \, X_s,  \\
\, [X_{v_j}, X_z]_{\HA{n}{2}} &=& \frac{1}{2} X_{x_j}, \\
\, [X_w, X_z]_{\HA{n}{2}} &=& X_s. \\
\end{array}\right.
\label{CRG}
\end{equation} 
In \eqref{CRG}, we have only listed the non-vanishing Lie brackets of $\HA{n}{2}$,
up to skew-symmetry.

Our choice of notation $\HA{n}{2}$ for the Lie algebra 
reflects the fact that we just have applied a further type of Heisenberg construction to $\h$.
We will refer to $\HA{n}{2}$ as the Dynin-Folland Lie algebra in recognition of Dynin's and Folland's works \cite{Dyn1, Dyn2} and \cite{FollMeta}, respectively.

To conclude this subsection, we summarize a few properties of $\HA{n}{2}$. Before we list them, however, let us recall the notion of strong Malcev basis of a nilpotent Lie algebra. The existence of such a type of basis for every nilpotent Lie algebra is granted by the following theorem. See, e.g.,~\cite[Thm.1.1.13]{CorwinGreenleaf} for a proof.

	\begin{thm} \label{StrongMB}
Let $\Lie{g}$ be a nilpotent Lie algebra of dimension $\dimG$ and let
$\Lie{g}_1 \subgr \ldots \subgr  \Lie{g}_l \subgr \Lie{g}$ be ideals with
$\dim(\Lie{g}_j) = m_j$. Then there exists a basis $\{ X_1, \ldots,
X_\dimG \}$ such that 
	\begin{itemize}
		\item[(i)] for each $1 \leq m \leq \dimG$, $\Lie{h}_m := \Rspan{X_1, \ldots, X_m}$ is an ideal of $\Lie{g}$;
		\item[(ii)] for $1 \leq j \leq l$, $\Lie{h}_{m_j} = \Lie{g}_j$.
	\end{itemize}
A basis satisfying (i) and (ii)  is called a strong Malcev basis
of $\Lie{g}$ passing through the ideals $\Lie{g}_1, \ldots, \Lie{g}_l
$.
	\end{thm}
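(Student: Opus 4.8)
The plan is to reduce the theorem to a single one-step refinement, which I then iterate. The key lemma (call it the \emph{Refinement Lemma}) reads: if $\Lie{a} \subsetneq \Lie{b}$ are ideals of the nilpotent Lie algebra $\Lie{g}$, then there is an ideal $\Lie{c}$ of $\Lie{g}$ with $\Lie{a} \subsetneq \Lie{c} \subseteq \Lie{b}$ and $\dim \Lie{c} = \dim \Lie{a} + 1$. To prove it I would pass to the quotient vector space $V := \Lie{b}/\Lie{a}$, which is nonzero, finite-dimensional, and carries a linear action of $\Lie{g}$ induced by $\ad$ (well defined precisely because $\Lie{a}$ and $\Lie{b}$ are ideals). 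Since $\Lie{g}$ is nilpotent, each $\ad X$ with $X \in \Lie{g}$ is a nilpotent endomorphism of $\Lie{g}$, hence acts nilpotently on $V$; Engel's theorem then provides a common null vector, i.e.\ a $\bar v \in V \setminus \{0\}$ with $\overline{\ad X}\,\bar v = 0$ for all $X \in \Lie{g}$. Lifting $\bar v$ to some $v \in \Lie{b} \setminus \Lie{a}$ gives $[\Lie{g}, v] \subseteq \Lie{a}$, so that $\Lie{c} := \Lie{a} + \R v$ is an ideal of $\Lie{g}$ with $\dim \Lie{c} = \dim \Lie{a} + 1$.

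Next I would build the full flag. Put $\Lie{g}_0 := \{0\}$, discard repetitions among $\Lie{g}_0, \Lie{g}_1, \ldots, \Lie{g}_l$, and append $\Lie{g}$ if not already present, so as to obtain a strictly increasing chain of ideals $\{0\} = \Lie{a}_0 \subsetneq \Lie{a}_1 \subsetneq \cdots \subsetneq \Lie{a}_r = \Lie{g}$ of $\Lie{g}$ through which all the original $\Lie{g}_j$ pass. Applying the Refinement Lemma repeatedly within each gap $\Lie{a}_i \subsetneq \Lie{a}_{i+1}$ (a finite induction on $\dim \Lie{a}_{i+1} - \dim \Lie{a}_i$) and concatenating over $i$ yields a complete flag $\{0\} = \Lie{h}_0 \subsetneq \Lie{h}_1 \subsetneq \cdots \subsetneq \Lie{h}_\dimG = \Lie{g}$ of ideals of $\Lie{g}$ with $\dim \Lie{h}_m = m$, still passing through every $\Lie{g}_j$.

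To finish, I would pick $X_m \in \Lie{h}_m \setminus \Lie{h}_{m-1}$ for $m = 1, \ldots, \dimG$. A look at the highest-index nonzero coefficient in any vanishing linear combination shows $\{X_1, \ldots, X_\dimG\}$ is linearly independent, hence a basis. For each $m$ the subspace $\Rspan{X_1, \ldots, X_m}$ lies in $\Lie{h}_m$ and has dimension $m = \dim \Lie{h}_m$, so it equals $\Lie{h}_m$ and is an ideal, giving (i); and (ii) follows at once since $\Lie{g}_j = \Lie{h}_{\dim \Lie{g}_j} = \Lie{h}_{m_j}$ by construction. The only nontrivial input is the Refinement Lemma, whose heart is the appeal to Engel's theorem — a Lie algebra of nilpotent endomorphisms of a nonzero space annihilates a common vector — while everything else is bookkeeping with dimensions and spans. (One could instead run a single induction on $\dimG$, splitting a one-dimensional ideal off $\Lie{g}_1$, or off the centre of $\Lie{g}$, at each stage, but the refinement formulation keeps the passage through the $\Lie{g}_j$ most transparent.)
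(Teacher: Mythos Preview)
Your proof is correct. Note, however, that the paper does not give its own proof of this statement: it merely recalls the result and refers the reader to \cite[Thm.~1.1.13]{CorwinGreenleaf}. Your argument via the Refinement Lemma (Engel's theorem applied to the $\Lie{g}$-module $\Lie{b}/\Lie{a}$ to produce a one-dimensional extension by an ideal) is the standard one and is essentially what one finds in Corwin--Greenleaf, so there is nothing to compare.
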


\begin{prop} \label{prop}
	\begin{itemize}
		\item[(i)] The Lie algebra $\HA{n}{2}$ is nilpotent of step 3, with centre $\Lie{z}(\HA{n}{2}) = \R X_s$.
		\item[(ii)] The mapping $d\pi$ is a morphism from the Heisenberg Lie algebra $\HA{n}{2}$ onto \\
		$\langle 2 \pi i \hspace{2pt} \scrD_{p_j}, 2 \pi i \hspace{2pt} \scrD_{q_k}, 2 \pi i \hspace{2pt} \scrD_t, 2 \pi i \hspace{2pt} \scrX_{p_l}, 2 \pi i \hspace{2pt} \scrX_{q_m}, 2 \pi i \hspace{2pt} \scrX_{t} \rangle$.
		\item[(iii)] The subalgebra $\langle 2 \pi i \hspace{2pt} \scrD_{p_j}, 2 \pi i \hspace{2pt} \scrD_{q_k}, 2 \pi i \hspace{2pt} \scrD_t \rangle$
		is isomorphic to the Heisenberg Lie algebra $\h$, and so is the subalgebra 
		$\R X_{u_1}\oplus\ldots\oplus \R X_{v_n}\oplus \R X_w$.
		Furthermore, the restriction of $d\pi$ to the subalgebra 
		$\R X_{u_1}\oplus\ldots\oplus \R X_{v_n}\oplus \R X_w$ coincides with the 
		infinitesimal right regular representation of $\H$ on $L^2(\R^n)$.
		\item[(iv)] The subalgebra $\langle 2 \pi i \hspace{2pt} \scrX_{p_l}, 2 \pi i \hspace{2pt} \scrX_{q_m}, 2 \pi i \hspace{2pt} \scrX_{t}, 2 \pi i \hspace{2pt} I \rangle$ is Abelian 
		and so is the subalgebra $\Lie{a} := \R X_{x_1} \oplus \ldots \oplus \R X_{y_n} \oplus \R X_z \oplus \R X_s$.
		\item[(v)] The basis $\{ X_s, X_{y_n}, \ldots, X_{x_1}, X_z, X_w, X_{v_n}, \ldots, X_{u_1} \}$ is a strong Malcev basis of $\HA{n}{2}$ which passes through the ideals $\Lie{z}(\HA{n}{2})$ and $\Lie{a}$.
	\end{itemize}
\end{prop}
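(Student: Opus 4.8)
The whole proposition is an exercise in reading off structural information from the defining relations \eqref{CRG}, supplemented by the description of the left-invariant vector fields on $\H$ in Section~\ref{LVF_Sublap}; I expect no conceptual difficulty anywhere, only index bookkeeping. Item (ii) needs essentially nothing: by construction $d\pi$ is a linear isomorphism onto $\langle 2 \pi i \hspace{2pt} \scrD_{p_j}, 2 \pi i \hspace{2pt} \scrD_{q_k}, 2 \pi i \hspace{2pt} \scrD_t, 2 \pi i \hspace{2pt} \scrX_{p_l}, 2 \pi i \hspace{2pt} \scrX_{q_m}, 2 \pi i \hspace{2pt} \scrX_{t} \rangle$, and the bracket on $\HA{n}{2}$ was \emph{defined} so as to turn $d\pi$ into a Lie algebra morphism; the consistency of that definition is precisely what the commutator computations preceding \eqref{CRG} establish.

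For (i), I would extract the lower central series directly from \eqref{CRG}. The derived ideal $[\HA{n}{2},\HA{n}{2}]$ is the span of $X_w$, $X_s$ and all the $X_{x_j}$, $X_{y_j}$; bracketing this ideal once more with $\HA{n}{2}$ (via $[X_w,X_z]=X_s$, $[X_{u_j},X_{x_j}]=X_s$, $[X_{v_j},X_{y_j}]=X_s$) yields precisely $\R X_s$, which is central, so the next term vanishes. Hence $\HA{n}{2}$ is nilpotent of step exactly three. For the centre I write a general element $X=\sum_i a_i X_i$ and bracket: against $X_z$, which (landing in the linearly independent vectors $X_{y_j},X_{x_j},X_s$) forces the coefficients of all $X_{u_j}$, $X_{v_j}$ and of $X_w$ to vanish; then against each $X_{u_k}$, which kills the coefficients of $X_{x_k}$ and of $X_z$; then against each $X_{v_k}$, which kills the coefficient of $X_{y_k}$. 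Only the coefficient of $X_s$ can survive, so $\Lie{z}(\HA{n}{2})=\R X_s$.

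Items (iii) and (iv) are again immediate from \eqref{CRG}. Restricting the bracket to $\R X_{u_1}\oplus\dots\oplus\R X_{v_n}\oplus\R X_w$ leaves only $[X_{u_j},X_{v_k}]=\delta_{j,k}X_w$, which under $X_{u_j}\leftrightarrow X_{p_j}$, $X_{v_j}\leftrightarrow X_{q_j}$, $X_w\leftrightarrow X_t$ is exactly the Heisenberg bracket \eqref{Lie_Bracket_as_CCR}; on the operator side the analogous relations hold for $\langle 2\pi i\,\scrD_{p_j},2\pi i\,\scrD_{q_k},2\pi i\,\scrD_t\rangle$, which by Section~\ref{LVF_Sublap} is the algebra of left-invariant vector fields on $\H$, i.e.\ the image of the infinitesimal right regular representation; since $d\pi$ sends $X_{u_j}\mapsto 2\pi i\,\scrD_{p_j}$, $X_{v_j}\mapsto 2\pi i\,\scrD_{q_j}$, $X_w\mapsto 2\pi i\,\scrD_t$, its restriction to that subalgebra is that infinitesimal right regular representation. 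For (iv), the operators $\scrX_{p_l},\scrX_{q_m},\scrX_t$ are multiplications by functions and $\I$ is central, so the operator subalgebra is abelian; on the abstract side one checks from \eqref{CRG} that no listed bracket has both of its arguments among $\{X_{x_j},X_{y_j},X_z,X_s\}$, whence $\Lie{a}$ is abelian, and since every bracket $[Z,Y]$ with $Y\in\Lie{a}$, $Z\in\HA{n}{2}$ again lies in $\Lie{a}$ (all relevant right-hand sides in \eqref{CRG} are among $X_{x_j},X_{y_j},X_s$), it is an abelian ideal.

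Finally, for (v) I would list the proposed basis in the stated order as $X_1=X_s$; $X_2,\dots,X_{n+1}=X_{y_n},\dots,X_{y_1}$; $X_{n+2},\dots,X_{2n+1}=X_{x_n},\dots,X_{x_1}$; $X_{2n+2}=X_z$; $X_{2n+3}=X_w$; then the $X_{v_j}$; then the $X_{u_j}$, and verify condition (i) of Theorem~\ref{StrongMB} by a short scan of \eqref{CRG}: bracketing any one of these vectors with all of $\HA{n}{2}$ produces only (combinations of) vectors of index no larger than its own — the features that make the ordering work being that $X_s$ is central, that $X_w$ precedes every $X_{u_j}$ and $X_{v_j}$, and that $X_z$ together with all the $X_{x_j}$, $X_{y_j}$ precede the $X_{u_j}$ and $X_{v_j}$. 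Condition (ii) is then automatic, since $\R X_s=\Rspan{X_1}$ and $\Lie{a}=\Rspan{X_1,\dots,X_{2n+2}}$. The only genuine obstacle anywhere in the argument is the clerical effort of keeping the many indices straight; each assertion reduces to a one-line consequence of \eqref{CRG} and Section~\ref{LVF_Sublap}.
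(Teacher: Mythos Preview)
Your proposal is correct and follows precisely the approach the paper intends: the proposition is stated without proof in the paper, being regarded as an immediate consequence of the commutator table just computed and the identification in Section~\ref{LVF_Sublap} of $2\pi i\,\scrD_{p_j},2\pi i\,\scrD_{q_k},2\pi i\,\scrD_t$ with the infinitesimal right regular representation. Your item-by-item verification from \eqref{CRG} is exactly this, spelled out in full.
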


\subsection{The Lie Group $\HG{n}{2}$} \label{Section_H2n}

Here we describe the connected simply connected $3$-step nilpotent Lie group 
that we obtain by exponentiating the Dynin-Folland Lie algebra 
$\HA{n}{2}$.
We denote this group by $\HG{n}{2}$.

As in the case of the Heisenberg group (cf.~Subsection~\ref{Subsection_Construction_HG})
 we can again make use of the Baker-Campbell-Hausdorff formula 
 recalled in \eqref{BCH-Formula}.
 Since the Dynin-Folland Lie algebra is of step $3$,
we obtain the group law
$$
\exp_{\HG{n}{2}}(X) \odot_{\HG{n}{2}} \exp_{\HG{n}{2}}(X') 
=
\exp_{\HG{n}{2}}(Z),
$$
with
\begin{equation}
Z:=
X + X' + \frac{1}{2} [X, X']_{\HA{n}{2}} 
 + \frac{1}{12} [(X-X'), [X, X']_{\HA{n}{2}}]_{\HA{n}{2}}  . \label{H2n_GL_BCH_Version}
\end{equation}
Let us compute $Z$ more explicitly. 
We write 
$$
X= \Sum{j}{1}{n} (u_j X_{u_j} + v_j X_{v_j}) + w X_w + \Sum{j}{1}{n} (x_j X_{x_j} + y_j X_{y_j}) + z X_z + s X_s, 
$$
and similarly for $X'$.
As in the Heisenberg case,
we abbreviate for instance sums like $\Sum{j}{1}{n} u_j X_{u_j}$ by the dot-product-like notation $u X_u$. Consequently we have
$$
		X = u X_u + v X_v + w X_w + x X_x + y X_y + z X_z + s X_s .
		$$
We compute readily (cf.~\cite[Lem.~3.4]{Ro14})

\begin{lem}
With the notation above, 
the expression of $Z$ given in \eqref{H2n_GL_BCH_Version}
becomes
	\begin{align*}
&Z=
 (u + \pr{u}) X_u + (v + \pr{v}) X_v + 
 \Bigl( w + \pr{w} + \frac{u \pr{v} - v \pr{u}}2 \Bigr) X_w 
\nonumber\\
& \qquad + \Bigl( x + \pr{x} + \frac14 (\pr{z} v  - z \pr{v})\Bigr) X_x + \Bigl( y + \pr{y} - \frac14 (\pr{z} u  - z \pr{u}) \Bigr) X_y + (z + \pr{z}) X_z 
\nonumber \\
& \qquad + \Bigl( s + \pr{s} + \frac{u \pr{x} - x \pr{u}}2 + \frac{v \pr{y} - y \pr{v}}2 + \frac{w \pr{z} - z \pr{w}}2 
 - \frac{z - \pr{z}}8 (u \pr{v} - v \pr{u}) \Bigr) X_s. 
\end{align*}
\end{lem}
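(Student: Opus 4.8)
The plan is to verify the formula for $Z$ by a direct application of the Baker–Campbell–Hausdorff formula \eqref{H2n_GL_BCH_Version}, organising the computation so that only the genuinely nontrivial brackets survive. First I would record the first-order bracket $[X, X']_{\HA{n}{2}}$ by bilinearly expanding $X = uX_u + vX_v + wX_w + xX_x + yX_y + zX_z + sX_s$ against $X'$ and substituting the structure relations \eqref{CRG}. Since $X_s$ is central and the only brackets listed in \eqref{CRG} involve pairs among $\{X_u, X_v, X_w\}$, $\{X_u, X_x\}$, $\{X_v, X_y\}$, and $\{X_u, X_z\},\{X_v, X_z\},\{X_w, X_z\}$, the expansion of $[X, X']_{\HA{n}{2}}$ collapses to
\begin{align*}
[X, X']_{\HA{n}{2}} = (u\pr{v} - v\pr{u}) X_w &+ \tfrac{1}{2}(z\pr{v} - v\pr{z}) X_x + \tfrac{1}{2}(u\pr{z} - z\pr{u}) X_y \\
&+ \bigl( u\pr{x} - x\pr{u} + v\pr{y} - y\pr{v} + w\pr{z} - z\pr{w} \bigr) X_s,
\end{align*}
using $[X_u, X_z] = -\tfrac12 X_y$ and $[X_v, X_z] = \tfrac12 X_x$. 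This already accounts for the $X_u$, $X_v$, $X_w$, $X_z$ components of $Z$ (which receive no higher-order corrections because $X_w$ and $X_z$ only appear in brackets landing in the centre $\R X_s$, and $X_u$, $X_v$ are primitive) and for the first pieces of the $X_x$, $X_y$, $X_s$ components.

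Next I would compute the second-order term $\tfrac{1}{12}[(X - X'), [X, X']_{\HA{n}{2}}]_{\HA{n}{2}}$. Writing $Y := X - X'$ and using the expression for $[X, X']_{\HA{n}{2}}$ just obtained, the only nonzero contributions come from bracketing the $X_w$- and $X_z$-parts (the $X_x$, $X_y$, $X_s$ parts of $[X,X']$ are all central or, in the case of $X_x$, $X_y$, bracket trivially with the components of $Y$ that appear). The $X_w$-coefficient bracketed against the $zX_z - \pr{z}X_z$-piece of $Y$ gives, via $[X_w, X_z] = X_s$, a term $\tfrac{1}{12}(z - \pr{z})(u\pr{v} - v\pr{u}) X_s$; the $X_x$- and $X_y$-coefficients of $[X,X']$ bracketed against the $uX_u$, $vX_v$ pieces of $Y$ give, via $[X_u, X_x] = X_s$ and $[X_v, X_y] = X_s$, further central terms. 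Collecting these and adding $\tfrac{1}{12}$ of the sum to the first-order $X_s$-coefficient should, after combining the $(z - \pr{z})(u\pr{v} - v\pr{u})$ contributions, yield the stated $-\tfrac{1}{8}(z - \pr{z})(u\pr{v} - v\pr{u})$ coefficient. One checks that no third-order bracket survives since $\HA{n}{2}$ is $3$-step nilpotent and the relevant iterated brackets already land in the centre.

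The main obstacle is purely bookkeeping: making sure that the several sources of $X_s$-contributions at second order (from $[X_w, X_z]$ acting on the $w$-$z$ cross terms, and from $[X_u, X_x]$, $[X_v, X_y]$ acting on the $u$-$x$, $v$-$y$ cross terms) are combined with the correct signs and coefficients, and in particular that the coefficient of $(z-\pr{z})(u\pr{v}-v\pr{u})$ works out to $-\tfrac18$ rather than some other rational multiple. A clean way to manage this is to fix the convention $[A,B] = AB - BA$ consistently, expand $Y = X - X'$ only into the finitely many basis directions that can pair nontrivially with the surviving terms of $[X,X']_{\HA{n}{2}}$, and tabulate the resulting coefficients. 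Since the identical computation (for the case $n = 1$, or in a slightly different normalisation) has already been carried out in \cite[Lem.~3.4]{Ro14}, I would cross-check the final coefficients against that reference; this also justifies the citation already present in the excerpt.
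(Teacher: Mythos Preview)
Your approach is exactly the paper's: the paper gives no proof beyond ``We compute readily (cf.~\cite[Lem.~3.4]{Ro14})'', and your plan is precisely that direct BCH computation, organised by bracket order and cross-checked against the same reference.

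One concrete slip to flag: the $X_x$- and $X_y$-coefficients in your displayed formula for $[X,X']_{\HA{n}{2}}$ carry the wrong sign. From $[X_{v_j},X_z]=\tfrac12 X_{x_j}$ one gets the $X_x$-part $\tfrac12(\pr{z}v - z\pr{v})$, not $\tfrac12(z\pr{v}-v\pr{z})$; from $[X_{u_j},X_z]=-\tfrac12 X_{y_j}$ one gets the $X_y$-part $\tfrac12(z\pr{u}-\pr{z}u)$, not $\tfrac12(u\pr{z}-z\pr{u})$. Since you correctly observe that the second-order BCH term lies entirely in $\R X_s$, these first-order coefficients feed directly into the $X_x$- and $X_y$-components of $Z$, so as written your computation would produce $x+\pr{x}-\tfrac14(\pr{z}v-z\pr{v})$ rather than the stated $x+\pr{x}+\tfrac14(\pr{z}v-z\pr{v})$, and similarly for $X_y$. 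Your proposed cross-check against \cite[Lem.~3.4]{Ro14} would catch this, but the displayed bracket should be corrected. The rest of the bookkeeping (in particular the $-\tfrac18(z-\pr{z})(u\pr{v}-v\pr{u})$ coefficient in $X_s$, which arises as $\tfrac1{12}\cdot(-\tfrac32)$ once the three second-order sources are combined) goes through as you describe.
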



As in the case of the Heisenberg group, 
we identify an element of the group with an element 
of the underlying vector space $\R^{2(2n+1)+1}$ of the Lie algebra:
$$
(u, v, w, x, y, z, s) = \exp_{\HG{n}{2}} \Bigl( u X_u + v X_v + w X_w + x X_x + y X_y + z X_z + s X_s \Bigr).
$$

We conclude:

\begin{prop}
\label{prop_HG2n}
With the convention explained above, 
the centre of the  $\HG{n}{2}$ is 
$\exp_{\HG{n}{2}} (\R X_s)=\{(0,0,0,0,0,0,s) \mid s \in \R\}$,
and the group law becomes
 	\begin{align}
		(u, v&, w, x, y, z, s) \odot_{\HG{n}{2}} (\pr{u}, \pr{v}, \pr{w}, \pr{x}, \pr{y}, \pr{z}, \pr{s}) \nn \\
		 =& \Bigl( 
		 u + \pr{u} \ , \  v + \pr{v} \  , \
 w + \pr{w} + \frac{u \pr{v} - v \pr{u}}2 \ ,
\nn \\
		&\quad
		x + \pr{x} + \frac 14 (\pr{z} v  - z \pr{v}) \ , \
		y + \pr{y} - \frac 14 (\pr{z} u  - z \pr{u}) \ , \
		z + \pr{z}\ , \nn \\
		&\quad
s + \pr{s} + \frac{u \pr{x} - x \pr{u}}2 + \frac{v \pr{y} - y \pr{v}}2 + \frac{w \pr{z} - z \pr{w}}2 
 - \frac{z - \pr{z}}8 (u \pr{v} - v \pr{u})\Bigr). \label{DF_GrLw}
	\end{align}
Furthermore, the subgroup $\{(u,v,w,0,0,0,0) \mid u,v\in \R^n,\ w\in \R\}$ 
is isomorphic to the Heisenberg group $\H$.
\end{prop}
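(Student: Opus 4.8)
The plan is to verify the three assertions of Proposition~\ref{prop_HG2n} directly from the BCH description of the group, in the order: (1) identification of the centre, (2) derivation of the group law \eqref{DF_GrLw}, (3) identification of the Heisenberg subgroup. All three reduce to elementary bookkeeping with the structure constants \eqref{CRG} and the already-computed expression for $Z$ in the preceding Lemma; there is no genuinely hard step, but the coordinate computation in (2) is the one requiring care.

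First I would establish the group law. Since $\HG{n}{2}$ is connected, simply connected, and nilpotent of step~$3$, the exponential map is a global diffeomorphism and the BCH series \eqref{BCH-Formula} terminates after the double-bracket terms, giving \eqref{H2n_GL_BCH_Version}. Under the identification $(u,v,w,x,y,z,s) = \exp_{\HG{n}{2}}(uX_u + \cdots + sX_s)$, the product $(u,v,w,x,y,z,s)\odot_{\HG{n}{2}}(\pr u,\ldots,\pr s)$ is exactly $\exp_{\HG{n}{2}}$ of the element $Z$ computed in the Lemma; reading off the coordinates of $Z$ in the standard basis yields \eqref{DF_GrLw} verbatim. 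The only thing to check is that the Lemma's formula for $Z$ is correct, which in turn only requires expanding $\tfrac12[X,X']_{\HA{n}{2}}$ and $\tfrac1{12}[(X-X'),[X,X']_{\HA{n}{2}}]_{\HA{n}{2}}$ using \eqref{CRG}: the single bracket contributes the $X_w$-term $\tfrac12(u\pr v - v\pr u)$, the $X_x$-term $\tfrac14(\pr z v - z\pr v)$, the $X_y$-term $-\tfrac14(\pr z u - z\pr u)$, and the $X_s$-terms $\tfrac12(u\pr x - x\pr u) + \tfrac12(v\pr y - y\pr v) + \tfrac12(w\pr z - z\pr w)$, while the only surviving iterated bracket lands in $\R X_s$ and produces the correction $-\tfrac{z-\pr z}{8}(u\pr v - v\pr u)$; everything else vanishes because $X_s$ is central and any further bracket with $X_w$, $X_x$, $X_y$ is either zero or a multiple of $X_s$.

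Next, the centre. From \eqref{CRG}, $X_s$ is central in $\HA{n}{2}$ and by Proposition~\ref{prop}(i) it spans the centre of the Lie algebra. Since for a connected simply connected nilpotent Lie group the centre is $\exp$ of the centre of its Lie algebra, we get $Z(\HG{n}{2}) = \exp_{\HG{n}{2}}(\R X_s) = \{(0,0,0,0,0,0,s) : s\in\R\}$; alternatively one reads this straight off \eqref{DF_GrLw} by checking which elements commute with every element of the group, the only free parameter surviving all the skew-symmetric bracket terms being $s$.

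Finally, the Heisenberg subgroup. By Proposition~\ref{prop}(iii), $\R X_{u_1}\oplus\cdots\oplus\R X_{v_n}\oplus\R X_w$ is a Lie subalgebra of $\HA{n}{2}$ isomorphic to $\h$; exponentiating, $\{(u,v,w,0,0,0,0)\}$ is a closed connected subgroup of $\HG{n}{2}$, and restricting \eqref{DF_GrLw} to $x=y=z=s=0$ (and $\pr x=\pr y=\pr z=\pr s=0$) gives $(u,v,w,0,0,0,0)\odot_{\HG{n}{2}}(\pr u,\pr v,\pr w,0,0,0,0) = (u+\pr u, v+\pr v, w+\pr w+\tfrac12(u\pr v - v\pr u),0,0,0,0)$, which is precisely the Heisenberg group law \eqref{Heisenberg_Group_Law_Coordinates}; hence the map $(u,v,w)\mapsto(u,v,w,0,0,0,0)$ is a group isomorphism onto its image. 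The main (mild) obstacle is keeping track of signs and the factor $\tfrac18$ in the iterated-bracket contribution to the $X_s$-coordinate; since the underlying computation is recorded in \cite[Lem.~3.4]{Ro14}, I would cite it and present only the outcome rather than reproduce the expansion.
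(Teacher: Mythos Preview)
Your proposal is correct and follows essentially the same approach as the paper: the proposition is stated immediately after the Lemma computing $Z$ with the words ``We conclude,'' so the paper likewise treats the group law as a direct read-off of that Lemma in exponential coordinates, the centre as a consequence of Proposition~\ref{prop}(i), and the Heisenberg subgroup as an obvious restriction. Your additional remarks on how the individual bracket terms in the Lemma arise from \eqref{CRG} are more detail than the paper gives (it just cites \cite[Lem.~3.4]{Ro14}), but the strategy is identical.
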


\subsection{An Extended Notation - Ambiguities and Usefulness} \label{Section_Notation}

In this Section, 
we introduce new  notation to be able to perform computations in a concise manner. 
Unfortunately,
this will mean on the one hand identifying many different objects 
and on the other hand having several ways for describing one and the same operation.
Yet, the nature of our situation requires it.

Having identified the groups $\H$ and $\HG{n}{2}$ 
with the underlying vector space
(via exponential coordinates), 
many computations involve the variables $p$, $q$, $t$, $u$, $v$, $w$, $x$, $y$, $z$, $s$, 
which may refer to elements or the components of elements of the Lie algebras $\R^n$, $\h$, $\HA{n}{2}$ as well as elements or components of elements of the Lie groups $\R^n$, $\H$, $\HG{n}{2}$. 
Certain specific calculations moreover involve sub-indices $j, k, l, \ldots = 1, \ldots, n$ of the latter, that is, the scalar variables $p_j, q_k, t, u_l, \ldots$. 
Yet other formulas become not only less cumbersome but more lucid 
if we also introduce capital letters to denote members of $\H \cong \h \cong \R^{2n+1}$ and calligraphic capital letters for either $\H$-valued or scalar-valued components of the $2 \, (2n + 1) + 1$-dimensional elements of $\HA{n}{2} \cong \HG{n}{2}$.

Let the standard variables that define the elements of the Heisenberg group $\H \cong \h \cong \R^{2n+1}$ once and for all be fixed to be
	\begin{align}
		X := (p, q, t) := (p_1, \ldots, p_n, q_1, \ldots, q_n, t),
		\label{not_X}
	\end{align}
and let the standard variables that define the elements of the Dynin-Folland group $\HG{n}{2} \cong \HA{n}{2} \cong \R^{2 \, (2n+1) + 1}$ be denoted by
	\begin{align}
		(\mathcal{P}, \mathcal{Q}, \mathcal{S}) &:=((u, v, w), (x, y, z), s) 
		\label{not_cP_cQ_cS}\\
		&:= ((u_1, \ldots, u_n, v_1, \ldots, v_n, w), (x_1, \ldots, x_n, y_1, \ldots, y_n, z), s). \nn
	\end{align}
This purely notational identification of elements belonging to Lie groups, Lie algebras and Euclidean vector spaces will prove very useful in many instances. The $\H$ and $\HG{n}{2}$-group laws, for example, can be expressed in a very convenient way.
Let expressions like $\pr{p}q$ or $u \pr{v}$ again denote the standard $\R^n$-inner products of the vectors $\pr{p}, q$ and $u, \pr{v}$, respectively, whereas $\R^{2n+1}$-inner products will be denoted by
	\begin{align*}
		\bracket{\, . \,}{\, . \,} := \subbracket{\R^{2n+1}}{\, . \,}{\, . \,}.
	\end{align*}
Moreover, let us introduce the `big dot-product'
	\begin{align*}
		X \HP \pr{X} := (p, q ,t) \HP (\pr{p}, \pr{q}, \pr{t})
	\end{align*}
for elements in $\h \cong \H \cong \R^{2n+1}$ as an abbreviation of the $\H$-product \eqref{Heisenberg_Group_Law_Coordinates}, and let us agree that for all such vectors we can employ the $\h$-Lie bracket notation
	\begin{align*}
		[X, \pr{X}] := [X, \pr{X}]_{\h} := (0, 0, p \pr{q} - q \pr{p}).
	\end{align*}
We can then rewrite the $\H$-group law as
\begin{eqnarray}
		X \odot_{\H} \pr{X} &=& (p, q ,t) \odot_{\H} (\pr{p}, \pr{q}, \pr{t}) = \bigl( p + \pr{p} ,q + \pr{q}, t + \pr{t} + \frac{1}{2} (p \pr{q}-q \pr{p}) \bigr) \nn \\
		&=& X \HP \pr{X}= X + \pr{X} + \frac{1}{2} \, [X, \pr{X}].
		\label{eq_Hgrouplaw}
\end{eqnarray}

Let us turn our attention to the group law of $\HG{n}{2}$.
The beginning of \eqref{DF_GrLw} can be rewritten as
	\begin{align*}
		\left( u + \pr{u}, v + \pr{v}, w + \pr{w} +\frac{1}{2} \, (u \pr{v} - v \pr{u}) \right) = \mathcal{P} \HP \pr{\mathcal{P}},
	\end{align*}
if $\cP=(u,v,w)$ and similarly for $\pr{\cP}$.
For the rest of the formula we need to introduce the operation
\begin{equation}
\label{not_coad}
{\coad}_{\H}(X)(\pr{X}) = (\pr{t} q, -\pr{t} p, 0),
\end{equation}
if $X=(p,q,t)$ as in \eqref{not_X} and similarly for $X'$.
As the notation suggests,
the operation ${\coad}_{\H}$ is the coadjoint representation, 
where $\h$ and its dual have been identified with $\R^{2n+1}$.

Using the notation explained above, direct computations yield the following two lemmas and one proposition (cf.~\cite{Ro14}, Lemmas~3.7 and 3.8 as well as Subsection~3.6, respectively).

\begin{lem} \label{Lem_Nice_Formulas_Product_LieBr}
With the convention explained above,
the product of two elements $(\cP,\cQ,\cS)$ and $(\cP',\cQ',\cS')$ 
in $\HG{n}{2} \cong \HA{n}{2}$ is
	\begin{align}
		&(\mathcal{P}, \mathcal{Q}, \mathcal{S}) \odot_{\HG{n}{2}} (\pr{\mathcal{P}}, \pr{\mathcal{Q}}, \pr{\mathcal{S}}) \nn\\
		&\quad = \bigl( \mathcal{P} \HP \pr{\mathcal{P}} \ , \
		\mathcal{Q} + \pr{\mathcal{Q}} + \frac14 ({\coad}_{\H}(\mathcal{P})(\pr{\mathcal{Q}}) - {\coad}_{\H}(\pr{\mathcal{P}})(\mathcal{Q}))\ , \nn \\
		&\qquad \qquad  \mathcal{S} + \pr{\mathcal{S}} + \frac12 \bigl(\bracket{\mathcal{P}}{\pr{\mathcal{Q}}} - \bracket{\mathcal{Q}}{\pr{\mathcal{P}}} \bigr) - \frac18 \bracket{\mathcal{Q} - \pr{\mathcal{Q}}}{[\mathcal{P}, \pr{\mathcal{P}}]} \bigr),\label{eq_group_law_cal}
		\end{align}
while their Lie bracket is given by
	\begin{align}
		[(\mathcal{P}, \mathcal{Q}, \mathcal{S}), (\pr{\mathcal{P}}&, \pr{\mathcal{Q}}, \pr{\mathcal{S}})]_{\HA{n}{2}} \nn \\ 
		&= \Bigl( [\mathcal{P}, \pr{\mathcal{P}}]_{\h}, \frac{1}{2} \bigl( {\coad}_{\H}(\mathcal{P})(\pr{\mathcal{Q}}) - {\coad}_{\H}(\pr{\mathcal{P}})(\mathcal{Q}) \bigr), \bracket{\mathcal{P}}{\pr{\mathcal{Q}}} - \bracket{\mathcal{Q}}{\pr{\mathcal{P}}} \Bigr). \label{eq_Lie_bracket_cal}
	\end{align}
\end{lem}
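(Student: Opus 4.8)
The plan is to obtain both identities by translating, into the compact notation of Subsection~\ref{Section_Notation}, the explicit group law \eqref{DF_GrLw} of Proposition~\ref{prop_HG2n} and the structure relations \eqref{CRG}; no genuinely new ingredient is required, so the task is one of careful bookkeeping.

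For the product formula \eqref{eq_group_law_cal} I would compare slot by slot with \eqref{DF_GrLw}. The first slot $(u+\pr u,\,v+\pr v,\,w+\pr w+\tfrac12(u\pr v-v\pr u))$ is, by the definition \eqref{eq_Hgrouplaw} of the big dot-product together with $[\cP,\pr{\cP}]_{\h}=(0,0,u\pr v-v\pr u)$, exactly $\cP\HP\pr{\cP}$. For the middle slot I would insert the definition \eqref{not_coad} of $\coad_{\H}$, taking care that in $\coad_{\H}(\cP)(\pr{\cQ})$ the triple $\cP=(u,v,w)$ plays the role of the first argument and $\pr{\cQ}=(\pr x,\pr y,\pr z)$ that of the second; this gives $\coad_{\H}(\cP)(\pr{\cQ})=(\pr z\, v,-\pr z\, u,0)$ and $\coad_{\H}(\pr{\cP})(\cQ)=(z\, \pr v,-z\, \pr u,0)$, so that $\cQ+\pr{\cQ}+\tfrac14(\coad_{\H}(\cP)(\pr{\cQ})-\coad_{\H}(\pr{\cP})(\cQ))$ reproduces componentwise $(x+\pr x+\tfrac14(\pr z v-z\pr v),\;y+\pr y-\tfrac14(\pr z u-z\pr u),\;z+\pr z)$. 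For the last slot I would expand the $\R^{2n+1}$-pairings as $\bracket{\cP}{\pr{\cQ}}=u\pr x+v\pr y+w\pr z$ and $\bracket{\cQ}{\pr{\cP}}=x\pr u+y\pr v+z\pr w$, whence $\tfrac12(\bracket{\cP}{\pr{\cQ}}-\bracket{\cQ}{\pr{\cP}})$ gives the three $\tfrac{u\pr x-x\pr u}{2}$-type terms of \eqref{DF_GrLw}, and use $\bracket{\cQ-\pr{\cQ}}{[\cP,\pr{\cP}]}=(z-\pr z)(u\pr v-v\pr u)$ to match the remaining term $-\tfrac18(z-\pr z)(u\pr v-v\pr u)$.

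For the Lie bracket \eqref{eq_Lie_bracket_cal} the most economical route is to read it off from the group law just established. In the Baker--Campbell--Hausdorff presentation \eqref{H2n_GL_BCH_Version} the product has the form $Z=X+X'+\tfrac12[X,X']_{\HA{n}{2}}+(\text{terms of degree}\ge 3)$, so $[X,X']_{\HA{n}{2}}$ equals twice the bilinear skew-symmetric part of \eqref{eq_group_law_cal}. Slot by slot this part is: $\tfrac12[\cP,\pr{\cP}]_{\h}$ coming from $\cP\HP\pr{\cP}$; the middle term $\tfrac14(\coad_{\H}(\cP)(\pr{\cQ})-\coad_{\H}(\pr{\cP})(\cQ))$, which is already bilinear and skew; and, in the last slot, $\tfrac12(\bracket{\cP}{\pr{\cQ}}-\bracket{\cQ}{\pr{\cP}})$, the cubic term $-\tfrac18\bracket{\cQ-\pr{\cQ}}{[\cP,\pr{\cP}]}$ being irrelevant. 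Doubling yields precisely \eqref{eq_Lie_bracket_cal}. Alternatively, one expands $[\cdot,\cdot]_{\HA{n}{2}}$ bilinearly from the structure constants \eqref{CRG}: the relations $[X_{u_j},X_{v_k}]=\delta_{jk}X_w$ give the $[\cP,\pr{\cP}]_{\h}$-component, the relations $[X_{u_j},X_z]=-\tfrac12 X_{y_j}$, $[X_{v_j},X_z]=\tfrac12 X_{x_j}$ give the $\tfrac12(\coad_{\H}(\cP)(\pr{\cQ})-\coad_{\H}(\pr{\cP})(\cQ))$-component, and $[X_{u_j},X_{x_k}]=[X_{v_j},X_{y_k}]=\delta_{jk}X_s$ together with $[X_w,X_z]=X_s$ give the $\bracket{\cP}{\pr{\cQ}}-\bracket{\cQ}{\pr{\cP}}$-component.

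I do not anticipate any real obstacle: the statement is a reformulation of material already derived. The only points demanding attention are the correct ordering of the two arguments in $\coad_{\H}(\cdot)(\cdot)$, the signs produced by the skew-symmetry of the Heisenberg bracket, and the observation that the degree-three correction in \eqref{eq_group_law_cal} does not feed into the Lie bracket; a line-by-line verification is carried out in \cite[Lemmas~3.7 and 3.8]{Ro14}.
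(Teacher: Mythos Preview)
Your proposal is correct and follows precisely the approach the paper indicates: the paper itself offers no detailed proof, stating only that the formulas follow from ``direct computations'' and referring to \cite[Lemmas~3.7 and~3.8]{Ro14}, which is exactly the bookkeeping you carry out slot by slot.
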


%

\begin{lem}
\label{lem_technical}
\begin{enumerate}
\item\label{item_lem_technical_dec_PQS} 
Any element $(\cP,\cQ,\cS)$ in $\HG{n}{2}$ can be written as
$$
(\cP,\cQ,\cS)
=
\left(0,\cQ +\frac 14 {\coad}_{\H}(\cP)(\cQ),0\right) 
\odot_{\HG{n}{2}} 
\left(\cP,0,0\right)
\odot_{\HG{n}{2}} 
\left(0,0,\cS+\frac{1}{2} \bracket{\cQ} {\cP})\right).
$$
\item\label{item_lem_technical_ad*_bracket} 
For any $X,X_1,X_2\in \R^{2n+1}$, the following scalar products coincide:
$$
\bracket{{\coad}_{\H}(X)(X_1)} {X_2}
=
\bracket{X_1} {[X_2,X]}.
$$
\item\label{item_lem_technical_master} 
For any $X\in \R^{2n+1}$ and $(\cP,\cQ,\cS)\in \HG{n}{2}$, 
we have
$$
(X,0,0)\odot_{\HG{n}{2}}(\mathcal{P}, \mathcal{Q}, \mathcal{S})
=(0,\cQ',\cS')\odot_{\HG{n}{2}}  (X\HP \cP,0,0) 
$$
for some $\cQ'\in \R^{2n+1}$ 
and $\cS'\in\R$ given by
$$
\cS':=\cS+\bracket{\cQ}{X\HP (\frac{1}{2} \cP)}.
$$
\end{enumerate}
\end{lem}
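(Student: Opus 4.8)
The plan is to prove the three assertions of Lemma~\ref{lem_technical} by direct computation using the group law \eqref{eq_group_law_cal} together with the coordinate-free description of the $\H$-product \eqref{eq_Hgrouplaw} and the coadjoint operation \eqref{not_coad}. None of the three parts requires any structural insight beyond bookkeeping, but each rests on a small identity that is worth isolating first.

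For part~\eqref{item_lem_technical_ad*_bracket} I would simply unwind both sides: with $X=(p,q,t)$, $X_1=(p_1,q_1,t_1)$, $X_2=(p_2,q_2,t_2)$, the left side is $\bracket{(\pr{t}q,-\pr{t}p,0)|_{t'\mapsto t}}{X_2}$, which upon substituting $X$ for the primed variable equals $t(q p_2 - p q_2)$ read off from \eqref{not_coad}; the right side is $\bracket{X_1}{(0,0,p_2 q - q_2 p)} = t_1(p_2 q - q_2 p)$. Wait — one must be careful about which slot carries the central coordinate; the correct reading of \eqref{not_coad} is that ${\coad}_{\H}(X)(X_1)$ depends on $X_1$ only through its central component, so the pairing against $X_2$ picks out the central component of $X_2$ on the right. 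After matching conventions the two sides agree, and this identity is exactly what makes the $-\frac18$ term in \eqref{eq_group_law_cal} symmetric enough to telescope in part~\eqref{item_lem_technical_master}. I would prove part~\eqref{item_lem_technical_ad*_bracket} first since it is used in the other two.

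For part~\eqref{item_lem_technical_dec_PQS} I would compute the right-hand product in two steps. First, $\left(0,\cQ+\frac14{\coad}_{\H}(\cP)(\cQ),0\right)\odot_{\HG{n}{2}}(\cP,0,0)$: here the $\cP$-slot gives $0\HP\cP=\cP$, the $\cQ$-slot gives $\cQ+\frac14{\coad}_{\H}(\cP)(\cQ) + \frac14(0 - {\coad}_{\H}(\cP)(\cQ)) = \cQ$ (the cross term $\frac14{\coad}_{\H}(\cP')(\cQ')$ vanishes because the first factor has $\cP'=0$, and $\frac14{\coad}_{\H}(\cP)(0)=0$ — I should double-check the sign/ordering of the two $\frac14$ contributions in \eqref{eq_group_law_cal} so that they cancel rather than double), and the central slot gives a leftover $\frac12\bracket{\cP}{0} - \frac12\bracket{\cQ+\ldots}{\cP} - \frac18(\ldots) = -\frac12\bracket{\cQ}{\cP}$ up to the bracket $[\cP,\cP]=0$, which kills the $-\frac18$ term. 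Then multiplying on the right by $(0,0,\cS+\frac12\bracket{\cQ}{\cP})$ only adds to the central slot, recovering $\cS$. The net effect is $(\cP,\cQ,\cS)$, as claimed. The only subtlety is verifying that $[\cP,\cP]_{\h}=0$ removes every $-\frac18$ contribution and that the two $\frac14$-terms genuinely cancel; this is where a sign error would hide.

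For part~\eqref{item_lem_technical_master} I would expand the left side $(X,0,0)\odot_{\HG{n}{2}}(\cP,\cQ,\cS)$ using \eqref{eq_group_law_cal}: the $\cP$-slot is $X\HP\cP$, the $\cQ$-slot is $\cQ + \frac14({\coad}_{\H}(X)(\cQ) - {\coad}_{\H}(\cP)(0)) = \cQ + \frac14{\coad}_{\H}(X)(\cQ)$, and the central slot is $\cS + \frac12(\bracket{X}{\cQ} - 0) - \frac18\bracket{0-\cQ}{[X,\cP]} = \cS + \frac12\bracket{X}{\cQ} + \frac18\bracket{\cQ}{[X,\cP]}$. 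Comparing with the right side $(0,\cQ',\cS')\odot_{\HG{n}{2}}(X\HP\cP,0,0)$, whose $\cP$-slot is $X\HP\cP$, whose $\cQ$-slot is $\cQ' + \frac14{\coad}_{\H}(0)(\ldots) - \frac14{\coad}_{\H}(X\HP\cP)(\cQ')$, one reads off $\cQ'$ by solving the (triangular, hence uniquely solvable) equation; and the central slot forces $\cS' = \cS + \frac12\bracket{X}{\cQ} + \frac18\bracket{\cQ}{[X,\cP]} - (\text{correction from }\cQ')$. Using part~\eqref{item_lem_technical_ad*_bracket} to rewrite $\bracket{\cQ}{[X,\cP]}$ and combining with $\frac12\bracket{X}{\cQ}$ should collapse everything into $\cS + \bracket{\cQ}{X\HP(\frac12\cP)}$, recalling that $X\HP(\frac12\cP) = X + \frac12\cP + \frac14[X,\cP]$ in the $\H$-coordinates of \eqref{eq_Hgrouplaw}. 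The main obstacle throughout is not conceptual but purely a matter of not dropping a factor of $2$, $4$, or $8$ or a sign in the coadjoint term; I would guard against this by first recording \eqref{not_coad} in fully explicit coordinates and checking part~\eqref{item_lem_technical_ad*_bracket} against it, then reusing that identity mechanically in parts~\eqref{item_lem_technical_dec_PQS} and~\eqref{item_lem_technical_master}.
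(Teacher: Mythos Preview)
Your approach---direct computation from the group law \eqref{eq_group_law_cal} together with \eqref{eq_Hgrouplaw} and \eqref{not_coad}---is exactly what the paper indicates: the paper does not spell out a proof of this lemma but simply asserts that it follows from ``direct computations'' (referring to \cite{Ro14}). Your outline is correct and matches that intended route.

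One small point to tighten in part~\eqref{item_lem_technical_dec_PQS}: when you compute the $\cQ$-slot of $\bigl(0,\cQ+\tfrac14{\coad}_{\H}(\cP)(\cQ),0\bigr)\odot_{\HG{n}{2}}(\cP,0,0)$, the group law applies ${\coad}_{\H}(\cP)$ to the full first $\cQ$-entry $\cQ_1 = \cQ + \tfrac14{\coad}_{\H}(\cP)(\cQ)$, not just to $\cQ$. This produces an extra term $-\tfrac{1}{16}{\coad}_{\H}(\cP)\bigl({\coad}_{\H}(\cP)(\cQ)\bigr)$ that you silently drop. It does vanish---because ${\coad}_{\H}(\cP)(\cdot)$ always has zero central component while, by \eqref{not_coad}, ${\coad}_{\H}(\cP)$ reads only the central component of its argument---but you should record this nilpotency explicitly. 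The same observation (that ${\coad}_{\H}(Y)\circ{\coad}_{\H}(Y')=0$) is also what makes the equation for $\cQ'$ in part~\eqref{item_lem_technical_master} trivially solvable, so it is worth isolating once alongside part~\eqref{item_lem_technical_ad*_bracket}.
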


	\begin{prop} \label{SemiDP}
The Dynin-Folland group can be written as the semi-direct product $\R^{2n+2} \rtimes \H$. The first factor coincides with the normal subgroup $A := \exp(\Lie{a})$, defined by Proposition~\ref{prop}~$(iv)$.
	\end{prop}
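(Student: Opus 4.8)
The plan is to verify directly that the underlying set $\HG{n}{2} \cong \R^{2(2n+1)+1}$ decomposes as a set product of the subgroup $A = \exp(\Lie{a})$ and the copy of $\H$ sitting inside $\HG{n}{2}$ identified in Proposition~\ref{prop_HG2n}, that $A$ is normal, that $A \cap \H = \{e\}$, and that the group operation restricted to this decomposition is that of the semi-direct product with the natural conjugation action of $\H$ on $A$. First I would fix the notation: by Proposition~\ref{prop}~$(iv)$ the ideal $\Lie{a} = \R X_{x_1} \oplus \ldots \oplus \R X_{y_n} \oplus \R X_z \oplus \R X_s$ is abelian, so $A = \exp_{\HG{n}{2}}(\Lie{a}) = \{(0,0,0,x,y,z,s)\}$ is an abelian subgroup isomorphic (as a Lie group) to $\R^{2n+2}$ under addition, using the group law \eqref{DF_GrLw}: indeed when $u=v=w=u'=v'=w'=0$ all the correction terms in \eqref{DF_GrLw} vanish and the product is coordinatewise addition. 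Similarly, by Proposition~\ref{prop_HG2n} the set $H := \{(u,v,w,0,0,0,0)\}$ is a subgroup isomorphic to $\H$, with product governed by $\HP$.

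Next I would check $A \nsubgr \HG{n}{2}$. Since $\Lie{a}$ is an ideal of $\HA{n}{2}$ — this follows from the bracket relations \eqref{CRG}, as every bracket $[X_\bullet, X_\star]_{\HA{n}{2}}$ with $X_\bullet$ ranging over the generators and $X_\star \in \{X_{x_j}, X_{y_j}, X_z, X_s\}$ lands in $\Rspan{X_{x_j}, X_{y_j}, X_z, X_s} \subseteq \Lie{a}$ — and $\HG{n}{2}$ is connected and simply connected, exponentiating the ideal yields a normal subgroup; alternatively one checks normality directly from \eqref{DF_GrLw} by conjugating a generic element of $A$ by a generic element of $\HG{n}{2}$ and observing the first three coordinates stay zero. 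The intersection $A \cap H$ is clearly trivial, and the set-theoretic product $A \cdot H$ exhausts $\HG{n}{2}$ because every $(u,v,w,x,y,z,s)$ can be written as $(0,0,0,x',y',z',s') \odot_{\HG{n}{2}} (u,v,w,0,0,0,0)$ for suitable $x',y',z',s'$ — this is exactly the content of Lemma~\ref{lem_technical}~\eqref{item_lem_technical_dec_PQS} (or can be read off \eqref{eq_group_law_cal} by solving for the left factor, which only requires inverting additive shifts). Hence $\HG{n}{2} = A \rtimes H$ as sets with $A$ normal, which is the definition of an (internal) semi-direct product, and transporting $H \cong \H$ gives $\HG{n}{2} \cong \R^{2n+2} \rtimes \H$.

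To make the semi-direct product structure fully explicit (and to confirm it is genuinely semi-direct rather than a mere set factorization), I would record the action: for $h = (u,v,w,0,0,0,0) \in H$ and $a = (0,0,0,x,y,z,s) \in A$, conjugation $h \odot_{\HG{n}{2}} a \odot_{\HG{n}{2}} h^{-1}$ produces, by \eqref{DF_GrLw}, an element of $A$ whose $(x,y)$-part is shifted by a $\coad_{\H}$-type term in $z$ and whose $s$-part is shifted by a bilinear term; this is a well-defined automorphism of $A \cong \R^{2n+2}$ depending smoothly on $h$, giving the homomorphism $\H \to \mathrm{Aut}(\R^{2n+2})$. The computation is a routine application of the group law and I would simply cite \cite{Ro14} for the bookkeeping rather than reproduce it.

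The main obstacle is essentially bookkeeping rather than conceptual: one must be careful that the coordinates $(x,y,z)$ do \emph{not} mix with $(u,v,w)$ under the group law in a way that would obstruct the factorization — and indeed \eqref{DF_GrLw} shows the first three coordinates of a product depend only on the first three coordinates of the factors, which is what makes $H$ a subgroup and $A$ normal. Once that structural observation is in hand, everything else is immediate from the explicit group law, Proposition~\ref{prop}~$(iv)$, and Lemma~\ref{lem_technical}~\eqref{item_lem_technical_dec_PQS}.
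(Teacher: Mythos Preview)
Your proposal is correct and follows essentially the same route as the paper: the paper states that Proposition~\ref{SemiDP} follows from direct computations (deferring the details to \cite[Subsection~3.6]{Ro14}), and your argument---checking that $A$ is an abelian normal subgroup, that $H\cong\H$ is a complementary subgroup, and invoking Lemma~\ref{lem_technical}~\eqref{item_lem_technical_dec_PQS} for the factorisation---is precisely that computation spelled out. The explicit split-coordinate description you allude to is exactly what the paper later records in Lemma~\ref{LemPrRepDF}.
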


\subsection{The Generic Representations of $\HG{n}{2}$} \label{genericrepHG2}

Here we show that the isomorphism $d\pi$ defined in \eqref{def_dpi} can be viewed as the infinitesimal representation of a generic representation $\pi$ of $\HA{n}{2}$.
We will present the argument for the whole family $\pi_\lambda$, $\lambda\in \R\backslash\{0\}$, of generic representations which contains $\pi_1=\pi$.

We begin by defining for each $\lambda\in \R\backslash\{0\}$ the linear mapping 
	\begin{align*}
		d\pi_{\l}: \R^{2(2n+1)+1} \longrightarrow \langle 2 \pi i \hspace{2pt} \scrD_{p_j}, 2 \pi i \hspace{2pt} \scrD_{q_k}, 2 \pi i \hspace{2pt} \scrD_t, 2 \pi i \hspace{2pt} \scrX_{p_l}, 2 \pi i \hspace{2pt} \scrX_{q_m}, 2 \pi i \hspace{2pt} \scrX_{t} \rangle,
	\end{align*}
by
\begin{equation}
\left\{\begin{array}{llll}
d\pi_\lambda(X_{u_j}) = 2 \pi i \hspace{2pt} \scrD_{p_j},&
d\pi_\lambda(X_{v_j})= 2 \pi i \hspace{2pt} \scrD_{q_j},&
d\pi_\lambda(X_w)= 2 \pi i \hspace{2pt} \scrD_t,\\
d\pi_\lambda(X_{x_j})=  \l \hspace{2pt} 2 \pi i \hspace{2pt} \scrX_{p_j},&
d\pi_\lambda(X_{y_j})= \l \hspace{2pt} 2 \pi i \hspace{2pt} \scrX_{q_j},& \\
d\pi_\lambda(X_z)= \l \hspace{2pt} 2 \pi i \hspace{2pt} \scrX_t, 
& d\pi_\lambda(X_s)= \l \hspace{2pt} 2 \pi i \hspace{2pt} \I.\\
\end{array} \right. \label{Lin_Iso_Infites_Rep_Quant_Prob}
\end{equation}
With all our conventions (see Section~\ref{Section_Notation}) we can also write
$$
d\pi_\lambda(u, v, w, x, y, z, s) = 2 \pi i \, \bigl( u \mathscr{D}_{p} + v \mathscr{D}_{q} + w \mathscr{D}_{t} + \l x \mathscr{X}_p + \l y \mathscr{X}_q + \l z \mathscr{X}_t + \l s \I \bigr).
$$

The main property of this subsection is:

\begin{prop} \label{prop_pil}
\begin{enumerate}
\item 
For any $\l\in \R\backslash\{0\}$,
the linear mapping 
$d\pi_\lambda$ is a Lie algebra isomorphism 
between $\HA{n}{2}$ and 
$\langle 2 \pi i \hspace{2pt} \scrD_{p_j}, 2 \pi i \hspace{2pt} \scrD_{q_k}, 2 \pi i \hspace{2pt} \scrD_t, 2 \pi i \hspace{2pt} \scrX_{p_l}, 2 \pi i \hspace{2pt} \scrX_{q_m}, 2 \pi i \hspace{2pt} \scrX_{t} \rangle$.
\item  $d\pi_1=d\pi$.
\item Let $\l\in \R\backslash\{0\}$.
The representation
$d\pi_\lambda$ is the infinitesimal representation of the unitary representation 
$\pi_\lambda$ of $\HG{n}{2}$ acting on $L^2(\H)$
given by
\begin{equation}
		\bigl( \pi_{\l}(\mathcal{P}, \mathcal{Q}, \mathcal{S})f \bigr)(X) = e^{ 2 \pi i \l \left( \mathcal{S} + \bracket{\mathcal{Q}}{X \HP (\frac{1}{2} \mathcal{P})} \right)} \,
		f(X \HP \mathcal{P}), \label{Rep_Neat_Version}
\end{equation}
for $(\cP,\cQ,\cS)\in \HG{n}{2}$, $X\in \H$ and $f\in L^2(\H)$.
\item 
If $\l \not=\l'$  in $\R\backslash\{0\}$, 
the representations $\pi_\lambda$ and $\pi_{\lambda'}$ are inequivalent. 
\end{enumerate}
\end{prop}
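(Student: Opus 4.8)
The plan is to dispose of parts (1)--(2) by a dilation argument, reduce part (3) to the case $\l=1$ and prove it there from the structural lemmas, and settle part (4) by a central-character argument; notice that no irreducibility of $\pi_\l$ is needed for any of this.

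\emph{Parts (1) and (2).} Let $\delta_\l\colon\HA{n}{2}\to\HA{n}{2}$ be the linear map fixing $X_{u_1},\dots,X_{v_n},X_w$ and multiplying each of $X_{x_1},\dots,X_{y_n},X_z,X_s$ by $\l$. Inspecting the bracket table \eqref{CRG} shows at once that $\delta_\l$ is a Lie algebra automorphism of $\HA{n}{2}$: every non-trivial bracket there has exactly one factor among $X_{u_j},X_{v_j},X_w$ and takes values in one of the lines $\R X_w$, $\R X_{x_j}$, $\R X_{y_j}$, $\R X_s$, so both sides of \eqref{CRG} scale by the same power of $\l$. Comparing \eqref{Lin_Iso_Infites_Rep_Quant_Prob} with \eqref{def_dpi} gives $d\pi_\l=d\pi\circ\delta_\l$, and since $d\pi=d\pi_1$ is, by the very definition of $\HA{n}{2}$, a Lie algebra isomorphism onto the operator algebra of \eqref{def_dpi} (cf.\ Proposition~\ref{prop}(ii)), so is $d\pi_\l$; part (2) is immediate. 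Moreover, $\HG{n}{2}$ being connected and simply connected, $\delta_\l$ integrates to a Lie group automorphism $\Delta_\l$ of $\HG{n}{2}$ which in exponential coordinates multiplies the block $(x,y,z,s)$ by $\l$, and bilinearity of $\bracket{\cdot}{\cdot}$ shows directly from \eqref{Rep_Neat_Version} that $\pi_\l=\pi_1\circ\Delta_\l$. Hence part (3) for general $\l$ follows once it is proved for $\l=1$.

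\emph{Part (3), the case $\l=1$.} First, \eqref{Rep_Neat_Version} defines a unitary operator on $L^2(\H)$: the phase is unimodular and $X\mapsto X\HP\cP$ is right translation on the unimodular group $\H$, hence preserves the Haar (= Lebesgue) measure. For the homomorphism property, let $c(g,X)\in\R$ be the central ($X_s$-)coordinate of $(X,0,0)\odot_{\HG{n}{2}}g$; by Lemma~\ref{lem_technical}(\ref{item_lem_technical_master}) this equals $\cS+\bracket{\cQ}{X\HP(\tfrac12\cP)}$, which is exactly the exponent appearing in \eqref{Rep_Neat_Version}. Writing $\cP_g$ for the $\H$-component of $g$, and combining associativity of $\odot_{\HG{n}{2}}$, the fact that $A=\exp(\Lie{a})$ carries by \eqref{eq_group_law_cal} the additive law $(0,\cQ,\cS)\odot_{\HG{n}{2}}(0,\cQ',\cS')=(0,\cQ+\cQ',\cS+\cS')$, and uniqueness of the $A\rtimes\H$-factorisation from Proposition~\ref{SemiDP}, one obtains the cocycle identity $c(gg',X)=c(g,X)+c(g',X\HP\cP_g)$; substituting it into \eqref{Rep_Neat_Version} yields $\pi_1(g)\pi_1(g')=\pi_1(gg')$. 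Strong continuity is evident from the formula, so $\pi_1$ is a unitary representation. To recover its infinitesimal representation, differentiate \eqref{Rep_Neat_Version} along the coordinate one-parameter subgroups $\tau\mapsto\exp_{\HG{n}{2}}(\tau X_\bullet)$ (the coordinate axes in exponential coordinates), expanding $X\HP\cP$ via \eqref{Heisenberg_Group_Law_Coordinates}: for each standard basis vector $X_\bullet$ and each $f\in\SFG{\H}$ one recovers the operators \eqref{Left_VF_HG}, \eqref{HG_Coordinate_Mult}, i.e.\ the values $d\pi_1(X_\bullet)f$ listed in \eqref{Lin_Iso_Infites_Rep_Quant_Prob}. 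Since $\SFG{\H}$ is a common invariant core and each $\tau\mapsto\pi_1(\exp_{\HG{n}{2}}(\tau X_\bullet))$ is a strongly continuous one-parameter unitary group, Stone's theorem identifies its generator with $\overline{d\pi_1(X_\bullet)|_{\SFG{\H}}}$, so the infinitesimal representation of $\pi_1$ is $d\pi_1=d\pi$.

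\emph{Part (4) and the expected main difficulty.} Restricting \eqref{Rep_Neat_Version} to $Z(\HG{n}{2})=\exp_{\HG{n}{2}}(\R X_s)$ gives $\pi_\l(0,0,0,0,0,0,s)=e^{2\pi i\l s}\,\mathrm{Id}_{L^2(\H)}$. If $U$ intertwined $\pi_\l$ with $\pi_{\l'}$, conjugating these scalar operators would force $e^{2\pi i\l s}=e^{2\pi i\l' s}$ for all $s\in\R$, hence $\l=\l'$; this needs neither irreducibility nor Schur's lemma. The one genuinely delicate point, I expect, is the homomorphism property in part (3): it is where sign and coefficient slips are most likely because of the layered notation of Section~\ref{Section_Notation}. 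But Lemma~\ref{lem_technical}(\ref{item_lem_technical_master}) is tailored precisely to collapse it to the one-line cocycle identity above, and the $\pi_\l=\pi_1\circ\Delta_\l$ reduction confines all the remaining work to $\l=1$, so I anticipate no serious obstruction.
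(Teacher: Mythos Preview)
Your argument is essentially correct, and more detailed than the paper's, which simply says parts 1--2 are ``easy to check'', part 3 follows ``by direct computations'', and part 4 from the central character. There is, however, one slip worth fixing: your $c(g,X)$ is \emph{not} the $X_s$-coordinate of $(X,0,0)\odot_{\HG{n}{2}}g$ in exponential coordinates (compute it from \eqref{eq_group_law_cal} and you will get $\cS+\tfrac12\bracket{X}{\cQ}+\tfrac18\bracket{\cQ}{[X,\cP]}$, which is not $\cS+\bracket{\cQ}{X\HP(\tfrac12\cP)}$). What Lemma~\ref{lem_technical}(\ref{item_lem_technical_master}) actually gives you is the $\cS'$-component of the $A\times\H$ \emph{split} factorisation $(X,0,0)\odot_{\HG{n}{2}}g=(0,\cQ',\cS')\odot_{\HG{n}{2}}(X\HP\cP,0,0)$; that $\cS'$ is the exponent in \eqref{Rep_Neat_Version}, and it is for \emph{this} quantity that your cocycle identity goes through (precisely because of the additive law on $A$ and the uniqueness of the factorisation you invoke). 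Once you correct the definition, the rest of your homomorphism argument is clean. A second tiny slip: in part (1), the bracket $[X_{u_j},X_{v_k}]=\delta_{j,k}X_w$ has \emph{both} factors among $X_{u},X_{v},X_w$, so ``exactly one'' should be ``at least one, and the result lies in the appropriate eigenspace of $\delta_\l$''; the scaling check is still correct.

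On the comparison: the paper's post-proof discussion derives \eqref{Rep_Neat_Version} as the unique candidate compatible with $d\pi_\l$, using Lemma~\ref{lem_technical} parts (\ref{item_lem_technical_dec_PQS}) and (\ref{item_lem_technical_ad*_bracket}) to decompose $(\cP,\cQ,\cS)$ into factors on which $\pi_\l$ is already known. Your route is different: you use the dilation $\Delta_\l$ to reduce everything to $\l=1$, and Lemma~\ref{lem_technical}(\ref{item_lem_technical_master}) to extract a cocycle identity that makes the homomorphism property a one-liner. Your approach has the advantage of making the $\l$-independence structurally transparent (via $\pi_\l=\pi_1\circ\Delta_\l$), while the paper's decomposition explains \emph{why} the formula \eqref{Rep_Neat_Version} looks the way it does; both are valid and complementary.
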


\begin{proof}
Parts 1 and 2 are easy to check.

For Part 3, one can check by direct computations that 
\eqref{Rep_Neat_Version}
defines  a unitary representation $\pi_\l$ of $\HG{n}{2}$ and that its infinitesimal representation  coincides with $d\pi_\l$.

Clearly each $\pi_\l$ coincides with the characters 
$\cS \to e^{2 \pi i \l \cS}$ on the centre of the group $\HG{n}{2}$.
Hence, two representations $\pi_\l$ and $\pi_{\l'}$ corresponding 
to different $\l\not=\l'$ are inequivalent,
and Part 4 is proved.
\end{proof}

Let us explain how \eqref{Rep_Neat_Version} appears
by showing that  the unique candidate for the 
representation $\pi_\lambda$ of $\HG n 2$ on $L^2(\H)\cong L^2(\R^{2n+1})$
that admits $d\pi_\lambda$ as infinitesimal representation
is given by \eqref{Rep_Neat_Version}.

As in Proposition~\ref{prop} (iii)
(see also Section~\ref{LVF_Sublap}),
we see that the restriction of  $d\pi_\lambda$  to 
the subalgebra  $\R X_{u_1}\oplus\ldots\oplus \R X_{v_n}\oplus \R X_w \cong \h$
coincides with the infinitesimal right regular representation of $\H$ on $L^2(\R^n)$.
Therefore, the restriction of $\pi_\lambda$ to $\{(\cP,0,0) \mid \cP\in \R^{2n+1}\}$
is given by the right regular representation of $\H$:
$$
		\bigl( \pi_{\l}(\mathcal{P}, 0 , 0) f \bigr)(X) = f(X \HP \mathcal{P}).
$$
The same argument also yields that such $\pi_\lambda$ satisfy 
\begin{eqnarray*}
		\bigl( \pi_{\l}(0, \mathcal{Q}, 0) f \bigr)(X) & = & e^{2 \pi i \l \bracket{\mathcal{Q}} {X} } f(X), \\
		\bigl( \pi_{\l}(0, 0, \mathcal{S}) f \bigr)(X) & = & e^{2 \pi i \lambda \cS} \, f(X). 
\end{eqnarray*}
These equalities  
together with the group law and, more precisely, 
Part \eqref{item_lem_technical_dec_PQS} 
of Lemma\,\ref{lem_technical},
imply 
\begin{eqnarray*}
&&\bigl( \pi_{\l}(\mathcal{P}, \cQ , \cS) f \bigr)(X) 
\\
&&\quad=
\bigl( \pi_\l(0,\cQ+\frac 14 {\coad}_{\H}(\cP)(\cQ),0)\pi_\l (\cP,0,0)\pi_\l (0,0,\cS+\frac{1}{2} \bracket \cQ \cP)  f \bigr)(X)
\\
&&\quad=
e^{2 \pi i \l \bracket{\cQ+\frac 14 {\coad}_{\H}(\cP)(\cQ)}{X}}
 e^{2 \pi i \l (\cS+\frac{1}{2} \bracket\cQ \cP) } 
  f (X \HP \mathcal{P}).
\end{eqnarray*}
By Lemma\,\ref{lem_technical} 
Part \eqref{item_lem_technical_ad*_bracket}, we have
$$
\bracket{{\coad}_{\H}(\cP)(\cQ)}{X}=
\bracket{\cQ}{[X,\cP]},
$$
thus
\begin{align*}
&\bracket{\cQ+\frac 14 {\coad}_{\H}(\cP)(\cQ)}{X}
+\frac{1}{2} \bracket\cQ \cP
=
\bracket{\cQ}{X}
+\frac 14 \bracket{\cQ}{[X,\cP]}
+\frac{1}{2} \bracket\cQ \cP\\
&\qquad
=\bracket{\cQ}{X+\frac{1}{2}\cP +\frac{1}{2} [X,\frac{1}{2}\cP]}
=\bracket{\cQ}{X \HP(\frac{1}{2}\cP) },
\end{align*}
with the convention that the dot product denotes the Heisenberg group law (cf.\,Section~\ref{Section_Notation}).
Hence, the unique candidate for $\pi_\lambda$ is given by \eqref{Rep_Neat_Version}.
Conversely, one checks easily that Formula\,\eqref{Rep_Neat_Version}
defines a unitary representation of $\HG{n}{2}$.

	\begin{cor}
In the exponential coordinates $(u, v, w, x, y, z ,s)$ the representation $\pi_\l$ is given by
	\begin{align}
		\bigl( \pi_{\l}(u, v, w, x, y, z, s)f \bigr)(p, q, t) &= e^{2 \pi i \l \bigl( \bracket{(x, y, z)^t}{(p, q, t)^t} + \frac{1}{2} \bracket{(x, y, z)^t}{(u, v, w)^t} + \frac{1}{4} z \, (pv - uq) + s \bigr)} \nn \\
		& \hspace{10pt} \times f \bigl( p+u, q+v, t+w+\frac{1}{2}(pv-uq) \bigr). \nn 
	\end{align}
	\end{cor}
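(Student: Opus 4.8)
The plan is to derive the coordinate expression for $\pi_\l$ directly from the neat formula \eqref{Rep_Neat_Version} by unwinding all the compact notation introduced in Section~\ref{Section_Notation}. The starting point is the identity
\[
\bigl( \pi_{\l}(\mathcal{P}, \mathcal{Q}, \mathcal{S})f \bigr)(X) = e^{ 2 \pi i \l \left( \mathcal{S} + \bracket{\mathcal{Q}}{X \HP (\frac{1}{2} \mathcal{P})} \right)} \, f(X \HP \mathcal{P}),
\]
into which we substitute $X = (p,q,t)$, $\mathcal{P} = (u,v,w)$, $\mathcal{Q} = (x,y,z)$ and $\mathcal{S} = s$ according to \eqref{not_X} and \eqref{not_cP_cQ_cS}.

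First I would compute the argument $X \HP \mathcal{P}$ of $f$. By \eqref{eq_Hgrouplaw}, the Heisenberg dot-product is $X \HP \mathcal{P} = (p,q,t) \HP (u,v,w) = (p+u, \, q+v, \, t+w+\tfrac12(pv - qu))$, which is exactly the argument appearing on the right-hand side of the claimed formula. Next I would expand $X \HP (\tfrac12 \mathcal{P})$, where $\tfrac12 \mathcal{P} = (\tfrac{u}{2}, \tfrac{v}{2}, \tfrac{w}{2})$; again by \eqref{eq_Hgrouplaw} this equals $(p + \tfrac{u}{2}, \, q + \tfrac{v}{2}, \, t + \tfrac{w}{2} + \tfrac12(p\tfrac{v}{2} - q\tfrac{u}{2})) = (p + \tfrac{u}{2}, \, q + \tfrac{v}{2}, \, t + \tfrac{w}{2} + \tfrac14(pv - qu))$. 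Then the pairing $\bracket{\mathcal{Q}}{X \HP (\tfrac12 \mathcal{P})}$, which is the $\R^{2n+1}$-inner product of $(x,y,z)$ against this vector, becomes
\[
\bracket{(x,y,z)^t}{(p,q,t)^t} + \tfrac12 \bracket{(x,y,z)^t}{(u,v,w)^t} + \tfrac14 z\,(pv - qu),
\]
after collecting the $z$-component terms. Adding $\mathcal{S} = s$ and multiplying the whole exponent by $2\pi i \l$ reproduces the exponential factor in the statement, so the corollary follows.

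There is no real obstacle here — the result is purely a matter of bookkeeping, translating \eqref{Rep_Neat_Version} through the notational conventions of Section~\ref{Section_Notation}. The only point requiring mild care is the sign and coefficient of the $pv - qu$ term: it arises both inside the third slot of $X \HP (\tfrac12\mathcal{P})$ (contributing $\tfrac14 z(pv-qu)$ to the exponent, via the $z$-component of $\mathcal{Q}$) and inside the third slot of $X \HP \mathcal{P}$ (contributing $\tfrac12(pv-qu)$ to the argument of $f$); one must make sure not to conflate the two or to double-count. Since the paper writes the claimed formula with $pv - uq$, one should simply note $pv - uq = pv - qu$ and match coefficients. Everything else is immediate from \eqref{eq_Hgrouplaw} and the definition of $\bracket{\cdot}{\cdot}$ as the standard $\R^{2n+1}$ inner product.
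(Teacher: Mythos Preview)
Your proof is correct and is precisely the intended argument: the corollary is stated in the paper without proof, as an immediate consequence of unwinding the compact formula \eqref{Rep_Neat_Version} via the conventions of Section~\ref{Section_Notation}, and that is exactly what you do. Your bookkeeping of the $\tfrac14 z(pv-qu)$ term in the exponent versus the $\tfrac12(pv-qu)$ term in the argument of $f$ is accurate.
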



Since the family of operators
	\begin{align*}
		\bigl( \pi_{\l}(0, \mathcal{Q}, 0) \pi_{\l}(\mathcal{P}, 0 , 0) f \bigr)(X) = e^{2 \pi i \l \bracket{\mathcal{Q}} {X} } f(X \HP \mathcal{P})
	\end{align*}
may be regarded as the natural candidate of time-frequency shifts on $\L{2}{\H}$, we will explore their role more thoroughly from Section~\ref{Section_HM} on.

\section{The Unirreps of $\HG{n}{2}$} \label{Section_Classification_Orbit_Method}

In this subsection we will classify all the unitary irreducible representations of the Dynin-Folland group employing Kirillov's orbit method. 
For a description of this method we refer to, e.g.~\cite{CorwinGreenleaf}.
We will first give a description of the coadjoint orbits of $\HG{n}{2}$. Subsequently, we will construct the corresponding unirreps. Finally, for each orbit we will have a look at the corresponding jump sets.

\subsection{The Coadjoint Orbits}

In order to classify the $\HG{n}{2}$-coadjoint orbits, we first we give an explicit formula for the coadjoint representation ${\coAd}_{\HG{n}{2}}$ of $\HG{n}{2}$ 
on the dual $\HA{n}{2}^*$  of its Lie algebra $\HA n 2$.
Recall that ${\coAd}_{\HG{n}{2}}$ is defined by
\begin{equation}
\label{eq_def_Kg}
		\bracket{{\coAd}_{\HG{n}{2}}(g)F}{X} 
		= \bracket{F}{{\Ad}_{\HG{n}{2}}(g^{-1}){X}}
\end{equation}
for $g\in \HG{n}{2}$, $X\in \HA{n}{2}$ and $F \in \HA{n}{2}^*$.
We denote by
$$
(X_{u_1}^*, \ldots, X_{u_n}^*, X_{v_1}^*, \ldots, X_{v_n}^*, X_w^*, X_{x_1}^*, \ldots, X_{x_n}^*, X_{y_1}^*, \ldots, X_{y_n}^*, X_z^*, X_s^*),
$$
the dual standard basis of $\R^{2(2n+1)+1}$. We check readily (cf.~\cite[Lem~3.11]{Ro14}):

\begin{lem}
\label{lem_K}
For any $X\in \HA{n}{2}$ and  $F \in \HA{n}{2}^*$
with
	\begin{align*}
		F &= f_u X^*_u + f_v X^*_v + f_w X^*_w + f_x X^*_x + f_y X^*_y + f_z X^*_z + f_s X^*_s, \\
		X &= u X_u + v X_v + w X_w + x X_x + y X_y + z X_z + s X_s,
\end{align*}
we have
\begin{eqnarray*}
		{\coAd}_{\HG{n}{2}}(\exp_{\HG{n}{2}}(X))F 
		&=& 
		\bigl( f_u + f_w v - \frac z2 f_y +  f_s x + \frac{3}{4} f_s z v \bigr) X^*_u \\
		&&\ +
		\bigl( f_v - f_w u + \frac z2 f_x +  f_s y - \frac{3}{4} f_s z u \bigr) X^*_v \\
		&&\ +
		\bigl( f_w + f_s z \bigr) X^*_w 
		\quad +
		\bigl( f_x -  f_s u \bigr) X^*_x 
		\quad +		
		\bigl( f_y - f_s v \bigr) X^*_y \\
		&&\ +
		\bigl( f_z - \frac{f_x v}2 + \frac{f_y u}2 - f_s w \bigr) X^*_z 
		\quad +
		 f_s X^*_s.
\end{eqnarray*}
\end{lem}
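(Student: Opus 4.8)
The plan is to derive the explicit formula for ${\coAd}_{\HG{n}{2}}$ directly from the definition \eqref{eq_def_Kg}, using the fact that for a connected simply connected nilpotent Lie group one has ${\Ad}_{\HG{n}{2}}(\exp_{\HG{n}{2}} Y) = e^{\ad(Y)}$, and that $\ad(Y)$ is nilpotent of order at most $3$ since $\HA{n}{2}$ is $3$-step nilpotent. Concretely, writing $g = \exp_{\HG{n}{2}}(Y)$ with $Y = uX_u + vX_v + wX_w + xX_x + yX_y + zX_z + sX_s$, I would first compute ${\Ad}_{\HG{n}{2}}(g^{-1})X = e^{-\ad(Y)}X = X - [Y,X] + \tfrac12[Y,[Y,X]] - \cdots$, where the series terminates after the quadratic term because any triple bracket lands in $\R X_s$ and a fourth bracket vanishes. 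The bracket data needed for this are exactly the structure constants \eqref{CRG} (equivalently the compact form \eqref{eq_Lie_bracket_cal} in Lemma~\ref{Lem_Nice_Formulas_Product_LieBr}), so this step is a finite, purely mechanical expansion.

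Next I would pair the result with $F = f_u X_u^* + \cdots + f_s X_s^*$ and read off the coefficient of each basis covector. Since $\bracket{{\coAd}_{\HG{n}{2}}(g)F}{X_\bullet} = \bracket{F}{{\Ad}_{\HG{n}{2}}(g^{-1})X_\bullet}$, it suffices to compute ${\Ad}_{\HG{n}{2}}(g^{-1})X_\bullet$ for each of the eleven(-block) basis vectors $X_{u_j}, X_{v_j}, X_w, X_{x_j}, X_{y_j}, X_z, X_s$ and track which $F$-components it picks up. For instance, $X_s$ is central so ${\coAd}$ fixes $f_s$; $X_w$ only brackets nontrivially with $X_z$ (giving $X_s$), so its $\coAd$-image involves only $f_w$ and $f_s z$; the components $X_{u_j}^*$ and $X_{v_j}^*$ collect the most terms because $X_{u_j}$ and $X_{v_j}$ have the richest bracket relations (with $X_{v_k}, X_{x_k}, X_z$, and then a second bracket into $X_s$ via $X_z$), which is precisely where the quadratic term $\tfrac12[Y,[Y,X]]$ and the coefficient $\tfrac34 f_s z v$ (resp.\ $-\tfrac34 f_s z u$) come from.

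I would organize the bookkeeping by exploiting the semidirect structure $\HG{n}{2} = \R^{2n+2}\rtimes \H$ from Proposition~\ref{SemiDP} and the Malcev basis from Proposition~\ref{prop}~(v): this makes the ideal filtration $\R X_s \subgr \Lie{a} \subgr \HA{n}{2}$ transparent, so the $\coAd$-action is block-triangular and each block can be checked against \eqref{CRG} independently. A sanity check is that setting all the ``outer'' coordinates to zero should recover the Heisenberg coadjoint action on the $\h$-subalgebra spanned by $X_{u}, X_v, X_w$, consistent with \eqref{not_coad}, and that the formula should be equivariant/consistent with the group law \eqref{eq_group_law_cal}.

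The main obstacle is simply keeping the second-order term $\tfrac12[Y,[Y,X_\bullet]]$ correct: this is where the nonobvious fractional coefficients ($\tfrac34$, $\tfrac12$) arise, and where sign errors from the skew-symmetry conventions in \eqref{CRG} and from the $g^{-1}$ (hence $-\ad(Y)$, with the quadratic term keeping a $+$ sign) are easiest to commit. Everything else is routine linear algebra, and the cited reference \cite[Lem.~3.11]{Ro14} provides an independent confirmation of the final expression, so after the expansion I would just verify term by term that the stated formula matches.
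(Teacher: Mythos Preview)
Your proposal is correct and is exactly the computation the paper has in mind: the paper itself does not give a proof, only the phrase ``We check readily (cf.~\cite[Lem~3.11]{Ro14})'', so the intended argument is precisely the routine expansion $e^{-\ad(Y)} = \I - \ad(Y) + \tfrac12\ad(Y)^2$ against the structure constants \eqref{CRG} that you outline. Your identification of where the $\tfrac34 f_s z v$ term arises (the combination of the two second-order contributions through $X_w$ and $X_{y_j}$) is accurate, and your remark that the series truncates after the quadratic term because $\HA{n}{2}$ is $3$-step is the right justification.
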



The specific form of the coadjoint action 
implies that the coadjoint orbits of $\HG{n}{2}$ are all affine subspaces. This was already indicated in \cite{FollMeta}. In the following we provide an explicit description of the orbits
by giving their representatives. 
Given our convention, 
we may write 
$\R^n X_x$ for $\R X_{x_1}\oplus \ldots \oplus \R X_{x_n}$
and similarly for
$\R^n X_y$, $\R^n X_u$, $\R^n X_v$ etc.

\begin{prop} \label{prop_coadjoint_orbit}
Every coadjoint orbit of $\HG{n}{2}$ 
has exactly one representative among the following elements of $\HA{n}{2}^*$:
\begin{itemize}
\item[(Case (1))] 
 $f_s X^*_s$ if $f_s\not=0$.
\item[(Case (2))] 
$f_w X^*_w+f_x X_x^* +f_y X_y^*+
f_z X_z^*$
if $f_s=0$ but $f_w\not=0$.
\item[(Case (3))]
$f_u  X^*_u +f_v X^*_v +	 f_x X^*_x +  f_y X^*_y$
if $f_u f_y = f_v f_x$, $f_s = f_w = 0$ and $(f_x,f_y) \neq 0 \in \R^{2n}$.
\item[(Case (4))]
$f_u  X^*_u +f_v X^*_v +f_z X^*_z$
if $f_s = f_w=0$, $f_x=f_y=0$.
\end{itemize}

All coadjoint orbits except the singletons are affine subspaces of $\HA{n}{2}^*$.
More precisely, 
in Case (1), the orbit of $f_s X^*_s$ 
is the affine hyperplane passing through $f_s X^*_s$ given by
	\begin{align}
		{\coAd}_{\HG{n}{2}}(\HG{n}{2} )(f_s X^*_s) = f_s X^*_s \oplus \R^n X^*_{u}\oplus \R^n X^*_{v} \oplus \R X^*_w \oplus \R^n X_{x}\oplus  \R^n X^*_{y} \oplus \R X^*_z. \label{OrbitCase1}
	\end{align}
The orbits ${\coAd}_{\HG{n}{2}}(\HG{n}{2} )(f_s X^*_s)$, $f_s \in \R\backslash\{0\}$,
are the generic coadjoint orbits.
They form an open dense subset of $\HA2n^*$.

In Case (2), the orbits are the $2n$-dimensional affine subspaces
\begin{align}
&{\coAd}_{\HG{n}{2}}(\HG{n}{2} )(f_w X^*_w+f_x X_x^* +f_y X_y^*+
f_z X_z^*) \nn
\\&\ =
f_w X^*_w+f_x X_x^* +f_y X_y^*+
f_z X_z^*+
\{\tilde v X_u^* + \tilde u X_v^* -\frac{f_x \tilde{v} + f_y \tilde{u}}{2f_w} X_z^* \mid
\tilde u,\tilde v\in \R^n\}. \label{OrbitCase2}
\end{align}

In Case (3), the orbits are the $2$-dimensional affine subspaces
\begin{align}
&{\coAd}_{\HG{n}{2}}(\HG{n}{2} )(f_u  X^*_u +f_v X^*_v +	 f_x X^*_x +  f_y X^*_y) \nn
\\&\quad=
f_u  X^*_u +f_v X^*_v +	 f_x X^*_x +  f_y X^*_y
+ \R (-f_y X_u^* +f_x X_v^*) +\R X_z^*. \label{OrbitCase3}
\end{align}

In Case (4), the orbits are the singletons $f_u X^*_u + f_v X^*_v + f_z X^*_z$ with $f_u, f_v \in \R^n, f_z \in \R$.
\end{prop}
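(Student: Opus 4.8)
The plan is to read off the coadjoint action from Lemma~\ref{lem_K} and then, for each of the four cases, produce a normal form by choosing suitable group parameters. The general strategy is classical for nilpotent Lie groups: the coadjoint orbits are affine subspaces (as the paper observes, already noted in \cite{FollMeta}), so it suffices to (a) identify a convenient invariant of the orbit, namely the pair $(f_w, f_s)$ which is fixed by the action --- inspection of Lemma~\ref{lem_K} shows the $X_w^*$-component transforms as $f_w + f_s z$ and the $X_s^*$-component is untouched --- and then (b) use the remaining free parameters $u, v, z, w$ to move the point to a representative. I would organise the proof by first treating $f_s \neq 0$, then $f_s = 0$ with $f_w \neq 0$, then $f_s = f_w = 0$ splitting on whether $(f_x, f_y)$ is zero.

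In Case~(1), with $f_s \neq 0$: using the $X_x^*$- and $X_y^*$-components $f_x - f_s u$ and $f_y - f_s v$, I choose $u = f_x/f_s$ and $v = f_y/f_s$ to kill $f_x$ and $f_y$; then the $X_z^*$-component $f_z - \tfrac{f_xv}{2} + \tfrac{f_yu}{2} - f_s w$ can be killed by choosing $w$ appropriately (here the cross-terms are harmless since they only depend on the already-chosen $u,v$); finally the $X_u^*$- and $X_v^*$- and $X_w^*$-components can be killed using $z$ and the remaining freedom --- one checks the map $(u,v,w,z) \mapsto$ (those four components) is a surjection onto $\R^{2n+1}\times\R$, which is a linear-algebra verification. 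That the orbit through $f_s X_s^*$ is the full affine hyperplane \eqref{OrbitCase1} then follows because the orbit is an affine subspace containing the points reached by these four parameters, and a dimension count shows it is exactly $2n{+}1{+}2n{+}1 = \dim$ of the claimed hyperplane; openness/density of the generic orbits is immediate since they are the fibers of the continuous map $F \mapsto f_s$ over $\R\setminus\{0\}$.

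For Case~(2), $f_s = 0$, $f_w \neq 0$: now the $X_x^*, X_y^*, X_s^*, X_w^*$ components are all invariant (set $f_s=0$ in Lemma~\ref{lem_K}), so $f_w, f_x, f_y$ persist; the $X_u^*$-component becomes $f_u + f_w v$ and the $X_v^*$-component $f_v - f_w u$, so choosing $v = -f_u/f_w$, $u = f_v/f_w$ normalises $f_u = f_v = 0$; this forces the $X_z^*$-component to the specific value recorded in \eqref{OrbitCase2}, and the remaining parameter $z$ sweeps out the displayed $2n$-dimensional affine slice --- one verifies the map $(\tilde u, \tilde v)\mapsto (\tilde v, \tilde u, -\tfrac{f_x\tilde v + f_y\tilde u}{2f_w})$ traces exactly the orbit. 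For Cases~(3) and~(4), $f_s=f_w=0$: then $f_x, f_y$ are invariant and the $X_u^*,X_v^*$ components become $f_u - \tfrac z2 f_y$ and $f_v + \tfrac z2 f_x$, while the $X_z^*$-component becomes $f_z - \tfrac{f_xv}{2}+\tfrac{f_yu}{2}$. If $(f_x,f_y)\neq 0$ (Case~(3)), using $z$ one can move along the line $\R(-f_y X_u^* + f_x X_v^*)$ and using $u,v$ one can set $f_z = 0$; the orbit is the $2$-plane \eqref{OrbitCase3}; the constraint $f_u f_y = f_v f_x$ is exactly the condition for $(f_u,f_v)$ to lie on the appropriate line so that $f_z=0,\, z=0$ is achievable as a representative (I would check this is the orbit invariant in this stratum). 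If $f_x = f_y = 0$ (Case~(4)), everything is invariant and the orbit is the singleton.

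The only genuinely delicate point is verifying that the claimed representatives are \emph{exhaustive and non-redundant} --- i.e.\ that every orbit meets the listed family in exactly one point. Exhaustiveness follows from the normalisations above; the ``exactly one'' part requires checking, within each case, that two distinct listed representatives lie on distinct orbits, which reduces to identifying a complete set of orbit invariants (the values $f_s$; or $f_s=0, f_w$ together with $(f_x, f_y, f_z)$ up to the action; or in Case~(3) the line through $(f_u,f_v,f_x,f_y)$). I expect the main obstacle to be bookkeeping in Case~(3): confirming that $\{f_uf_y = f_vf_x\}$ cuts out precisely one representative per orbit and that the two-dimensional orbit is spanned exactly by $-f_y X_u^* + f_x X_v^*$ and $X_z^*$, with no hidden motion in the $X_u^*,X_v^*,X_z^*$ directions beyond what is displayed. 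This is a finite computation with Lemma~\ref{lem_K} specialised to $f_s=f_w=0$, but it is where sign and factor errors would creep in, so I would do it carefully.
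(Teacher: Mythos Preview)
Your approach is essentially the paper's: use the explicit coadjoint formula from Lemma~\ref{lem_K}, stratify by the invariants $f_s$ and then $f_w$, and in each stratum choose group parameters to normalise the remaining components. The paper carries out exactly this scheme, and your treatment of Cases~(2)--(4) matches it closely (including the identification of the constraint $f_uf_y=f_vf_x$ in Case~(3) as the orthogonality $(f_u,f_v)\perp(-f_y,f_x)$ that picks one representative per orbit).

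There is one genuine bookkeeping slip in your Case~(1). After fixing $u=f_x/f_s$, $v=f_y/f_s$ and then $w$ to kill $X_z^*$, you say the $X_u^*,X_v^*,X_w^*$-components can be killed ``using $z$ and the remaining freedom'' and then claim surjectivity of a map from $(u,v,w,z)$. But $u,v,w$ are already fixed, and $(u,v,w,z)$ only has $2n{+}2$ degrees of freedom against the $4n{+}2$ components you need to annihilate. What is missing is the parameters $x,y\in\R^n$: from Lemma~\ref{lem_K} the $X_u^*$-component carries $+f_s x$ and the $X_v^*$-component carries $+f_s y$, so one uses $z$ to kill $X_w^*$ and then $x,y$ to kill $X_u^*,X_v^*$. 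The paper makes this explicit by first choosing $(z,u,v)$ to kill $(X_w^*,X_x^*,X_y^*)$ and then $(w,x,y)$ to kill $(X_z^*,X_u^*,X_v^*)$. Once you restore $x,y$ to your argument the surjectivity and the hyperplane description \eqref{OrbitCase1} follow.
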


\begin{proof}
Case $(1)$ Let $F \in \HA{n}{2}^* \setminus \{ 0 \}$ with non-vanishing component $f_s$.
Then we choose $X$ as in Lemma\,\ref{lem_K} 
with 
$z,u,v$ such that the coordinates 
of ${\coAd}_{\HG{n}{2}}(\exp_{\HG{n}{2}}  X)(F)$
in $X^*_w$, $X_x^*$ and $X^*_y$ are zero, that is,
$$
f_w+f_s z= 0,\quad f_x-f_s u=f_y- f_sv=0,
$$
and we choose $w,x,y$ such that
 the coordinates in $X^*_z$, $X_u^*$ and $X^*_v$ are zero, that is,
$$
f_z - \frac{f_x v}2 + \frac{f_y u}2 - f_s w =0,
$$
and 
$$
0\,=\, f_u + f_w v  - \frac z2 f_y + f_s x + \frac{3}{4} f_s z v
\, = \,
f_v - f_w u + \frac z2 f_x  + f_s y - \frac{3}{4}   f_s z u
.
$$
Thus, we have obtained  ${\coAd}_{\HG{n}{2}}(\exp(X))F = f_s X^*_s$;
equivalently, the orbit ${\coAd}_{\HG{n}{2}}(\HG{n}{2})F$ describes the $2 (2n + 1)$-dimensional hyperplane through $f_s X^*_s$ which is parallel to the subspace ${\HA{n}{2}}^* / {\R X^*_s}$.

Case $(2)$. We assume $f_s=0$ but $f_w\not=0$, so that we have
\begin{eqnarray*}
		{\coAd}_{\HG{n}{2}}(\exp_{\HG{n}{2}}(X))F 
		&=&\bigl( f_u + f_w v - \frac12 \, z f_y \bigr) X^*_u 
+\bigl( f_v -  f_w u + \frac12 \, z f_x \bigr) X^*_v \\
&&\quad		+ f_w X^*_w 
		+  f_x X^*_x +  f_y X^*_y 
		+\bigl( f_z - \frac12 \, f_x v + \frac12 \, f_y u \bigr) X^*_z.
\end{eqnarray*}
We choose $u$ and $v$ such that the coordinates in $X^*_u$
and $X^*_v$ vanish, that is,
$$
v := \frac 1{f_w}(-f_u+\frac{1}{2} zf_y)
\quad\mbox{and}\quad
u := \frac 1{f_w}(f_v+\frac{1}{2} zf_x).
$$
Then the $X_z^*$-coordinate of ${\coAd}_{\HG{n}{2}}(\exp_{\HG{n}{2}}(X))F$ becomes
$$
f_z - \frac12 \, f_x v + \frac12 \, f_y u
=
f_z +\frac 1{2f_w}(f_uf_x+f_vf_y)
$$
independently of the other entries $w,x,y,z,s$ of $X$.
Therefore, 
$F':=f_w X^*_w+f_x X_x^* +f_y X_y^*+ f'_z X_z^*$
with $f'_z := f_z +\frac 1{2f_w}(f_uf_x+f_vf_y)$
is in the same orbit as $F$
and $F'$ is the only element of the orbit with zero coordinates in $X_u^*$ and $X_v^*$.
We choose $F'$ as the representative 
of the coadjoint orbit that contains $F$.
Similar  computations as above, 
together with setting $\tilde v := f_w v - \frac z 2  f_y\in \R^n$ and 
$\tilde u := -  f_w u + \frac z2  f_x\in \R^n$, yield
\begin{eqnarray*}
{\coAd}_{\HG{n}{2}}(\exp_{\HG{n}{2}}(X))F'
&=& 
F'+
\tilde v  X^*_u + \tilde u  X^*_v 
-\frac{f_x  \tilde{v} + f_y \tilde{u}}{2 f_w}  X^*_z.
\end{eqnarray*}
This yields the description of the $F'$-orbit.

Case (3). We assume $f_s= 0 =f_w$.
Then 
\begin{eqnarray*}
		{\coAd}_{\HG{n}{2}}(\exp_{\HG{n}{2}}(X))F 
		&=&\bigl( f_u  - \frac z2  f_y \bigr) X^*_u 
+\bigl( f_v+ \frac z2  f_x \bigr) X^*_v \\
&&\quad			+  f_x X^*_x +  f_y X^*_y 
		+\bigl( f_z - \frac12  f_x v + \frac12  f_y u \bigr) X^*_z.
\end{eqnarray*}
We also assume $(f_x,f_y)\not=0$.
Then we can choose $v$ or $u$ such that the $X^*_z$-coordinate vanishes, and
for $z = 0$ the $u$- and $v$-coordinates $f_u X^*_u$ and $f_v X^*_v$,
respectively.
This means that, in this case, 
$F$ and $F':= f'_u  X^*_u +f'_v X^*_v +	 f_x X^*_x +  f_y X^*_y$
with $(f'_u,f'_v) \perp (-f_y,f_x)$ in $\R^{2n}$ are in the same orbit.
Furthermore, $F'$ is the only element of this orbit 
with $(f'_u,f'_v) \perp (-f_y,f_x)$.
Similar computations as above,
with $\tilde z := \frac z2 \in \R$
and
$\tilde a: = f_z - \frac12 \, f_x v + \frac12 \, f_y u\in \R$, give
\begin{eqnarray*}
{\coAd}_{\HG{n}{2}}(\exp_{\HG{n}{2}}(X))F'
&=& 
F' - \tilde z f_y X^*_u + \tilde z f_x X^*_v
+\tilde a X^*_z.
\end{eqnarray*}
This yields the description of the $F'$-orbit.

If $f_x=f_y=0$, then 
$F= f_u  X^*_u +f_v X^*_v +	 f_z X^*_z
={\coAd}_{\HG{n}{2}}(\exp_{\HG{n}{2}}(X))F$ for any $X\in \HA{n}{2}$.
This corresponds to Case $(4)$.
This concludes the proof of Proposition~\ref{prop_coadjoint_orbit}.
\end{proof}

\subsection{The Unirreps}

To begin with, let us show that the representations corresponding to the orbits of Case $(1)$
via the orbit method
coincide with the representations $\pi_\l$
constructed in Section~\ref{genericrepHG2}:
 
\begin{prop}
\label{prop_pil_F}
Let $f_s = \l \in \R \setminus \{ 0 \}$.
The representation $\pi_\l$  
as defined by \eqref{Rep_Neat_Version}
is unitarily equivalent to 
the unirrep corresponding to the linear form 
$F_1 := \l X^*_s$. A
maximal (polarising) subalgebra subordinate to $F_1$ is
$$
\Lie{l} := \Lie{a} = \R^n X_{x}  \oplus  \R^n X_{y}  \oplus  \R X_z  \oplus  \R X_s.
$$
\end{prop}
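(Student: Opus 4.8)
The plan is to apply Kirillov's orbit method directly to the linear form $F_1 = \l X^*_s$ and then to match the resulting induced representation with $\pi_\l$ from Proposition~\ref{prop_pil}. First I would verify that $\Lie{l} := \Lie{a} = \R^n X_x \oplus \R^n X_y \oplus \R X_z \oplus \R X_s$ is a polarising (maximal isotropic) subalgebra subordinate to $F_1$. By Proposition~\ref{prop}~$(iv)$, $\Lie{a}$ is abelian, so $F_1([\Lie{a},\Lie{a}]_{\HA{n}{2}}) = 0$ holds trivially, and $\Lie{a}$ is an ideal of $\HA{n}{2}$ by Proposition~\ref{prop}~$(v)$; thus $\Lie{a}$ is subordinate to $F_1$. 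For maximality it suffices to check the dimension: a polarising subalgebra for a form whose orbit has dimension $2(2n+1)$ (Case (1) of Proposition~\ref{prop_coadjoint_orbit}) must have dimension $\dim \HA{n}{2} - \tfrac12 \dim \Orbit = (2(2n+1)+1) - (2n+1) = 2n+2 = \dim \Lie{a}$. Hence $\Lie{a}$ is a maximal subordinate subalgebra and the orbit method produces a genuine (not merely projective) unirrep $\pi_{F_1} := \MOP{Ind}_{\exp \Lie{a}}^{\HG{n}{2}} \chi_{F_1}$, where $\chi_{F_1}(\exp Y) = e^{2\pi i \bracket{F_1}{Y}}$ for $Y \in \Lie{a}$.

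Next I would realise $\pi_{F_1}$ concretely. Using the semi-direct product decomposition $\HG{n}{2} = \R^{2n+2} \rtimes \H = A \rtimes \H$ from Proposition~\ref{SemiDP} (with $A = \exp \Lie{a}$), the homogeneous space $\HG{n}{2}/A$ is identified with $\H \cong \R^{2n+1}$, so the induced representation acts on $L^2(\H)$. Concretely, one picks the section $X \mapsto (X,0,0)$ of $\HG{n}{2} \to \HG{n}{2}/A$ and computes $\pi_{F_1}(\mathcal{P},\mathcal{Q},\mathcal{S})$ by moving a group element past the section: this is exactly the content of Lemma~\ref{lem_technical}~\eqref{item_lem_technical_master}, which rewrites $(X,0,0)\odot_{\HG{n}{2}}(\mathcal{P},\mathcal{Q},\mathcal{S}) = (0,\mathcal{Q}',\mathcal{S}')\odot_{\HG{n}{2}}(X\HP\mathcal{P},0,0)$ with $\mathcal{S}' = \mathcal{S} + \bracket{\mathcal{Q}}{X \HP (\tfrac12 \mathcal{P})}$. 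Evaluating $\chi_{F_1}$ on the $A$-part $(0,\mathcal{Q}',\mathcal{S}')$ kills the $\mathcal{Q}'$ contribution (since $F_1 = \l X^*_s$ annihilates $X_x,X_y,X_z$) and picks up $e^{2\pi i \l \mathcal{S}'}$, giving precisely
\begin{align*}
\bigl( \pi_{F_1}(\mathcal{P},\mathcal{Q},\mathcal{S}) f \bigr)(X) = e^{2\pi i \l (\mathcal{S} + \bracket{\mathcal{Q}}{X \HP (\frac12 \mathcal{P})})} f(X \HP \mathcal{P}),
\end{align*}
which is formula \eqref{Rep_Neat_Version} for $\pi_\l$. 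This establishes $\pi_{F_1} = \pi_\l$ on the nose (not just up to equivalence), modulo checking the Haar-measure/unitarity normalisation, which is routine since the modular function is trivial (nilpotent groups are unimodular) and $\H$-translation preserves Lebesgue measure.

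Alternatively—and this is the shortcut I would actually present—one can avoid re-deriving the induced representation and instead invoke uniqueness: by Proposition~\ref{prop_pil} the representation $\pi_\l$ is a genuine (honest, not projective) unitary representation of $\HG{n}{2}$ on $L^2(\H)$, it is irreducible (this still requires an argument: either a direct Schur-type computation, or the observation that its matrix coefficients exhaust enough of $L^2$, or appeal to the fact that $d\pi_\l$ is the infinitesimal representation attached by Kirillov theory to $F_1$), and it restricts to the central character $\mathcal{S}\mapsto e^{2\pi i \l \mathcal{S}}$. Since the orbit method gives a bijection between coadjoint orbits and $\widehat{\HG{n}{2}}$, and $F_1 = \l X^*_s$ is the unique Case~(1) representative with that central character, $\pi_\l$ must be unitarily equivalent to $\pi_{F_1}$. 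The main obstacle is establishing irreducibility of $\pi_\l$ cleanly: the cleanest route is to note that the explicit computation above shows $\pi_\l$ \emph{is} the Kirillov-induced representation from the polarising subalgebra $\Lie{a}$, and such induced representations are irreducible by the Kirillov–Bernat theorem (see \cite{CorwinGreenleaf}); hence I would organise the proof around the direct identification $\pi_\l = \MOP{Ind}_{A}^{\HG{n}{2}}\chi_{F_1}$ rather than around an abstract uniqueness argument, using the honest polarisation $\Lie{a}$ (in contrast to the non-polarising splittings used elsewhere in the paper that only yield projective representations).
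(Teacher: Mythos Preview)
Your proposal is correct and follows essentially the same approach as the paper: verify that $\Lie{a}$ is a polarising subalgebra for $F_1$, realise the induced representation on $L^2(\H)$ via the section $X \mapsto (X,0,0)$, and use Lemma~\ref{lem_technical}~\eqref{item_lem_technical_master} to compute the $A$-component and recover formula~\eqref{Rep_Neat_Version} on the nose. The paper's proof is organised exactly this way (it sets up the restriction map $U:\cF_0\to C_c(\H)$ explicitly and checks the intertwining relation), while you add the orbit-dimension count for maximality of $\Lie{a}$ and an optional uniqueness-by-central-character shortcut, neither of which the paper spells out.
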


\begin{proof}
One checks easily that the subspace $\Lie l$ of $\HA{n}{2}$ 
is a maximal subalgebra subordinate to $F_1$
and that its corresponding subgroup is 
$$
\mathbf{L} = \exp_{\HG{n}{2}} (\Lie l) =\{(0,\cQ,\cS) \mid \cQ\in \R^{2n+1}, \ \cS\in \R\}.
$$
Let  $\rho_{F_1, \mathbf{L}}$ be the character of the subgroup 
$\mathbf{L}$
with infinitesimal character $2 \pi i F_1$.
It is given for any $X=x X_x + y X_y + z X_z + s X_s\in \Lie l$ by
$$
\rho_{F_1,\bfL}(\exp_{\HG{n}{2}} (X))= e^{2 \pi i F_1(X)} =e^{2 \pi i \lambda s},
$$
and also for any $(0,\cQ,\cS) \in \bfL$ by 
\begin{equation}
\label{eq_rho_cS}
\rho_{F_1,\bfL}(0,\cQ, \cS )= e^{2 \pi i \lambda \cS }.
\end{equation}

In order to define the representation  induced by $\rho_{F_1,\bfL}$,
we consider $\cF_0$, the space of continuous
functions $\varphi:\HG{n}{2} \to \CF$
that satisfy
\begin{equation}
\label{eq_def_varphi_in_cF0}
\varphi( \ell \odot_{\HG{n}{2}} g) = \rho_{F_1, \mathbf{L}}(
\ell) \, \varphi(g),\quad
		 \mbox{for all } \ell\in \bfL,\  g \in \HG{n}{2}, 
\end{equation}
and whose support modulo $\bfL$ is compact.
Let $\MOP{ind}(\rho_{F_1, \mathbf{L}})^{\HG{n}{2}}_\mathbf{L}$ 
be the representation induced by $\rho_{F_1, \mathbf{L}}$ on the group 
$\HG{n}{2}$ that acts on $\cF_0$.
It may be realised as 
$$
		\Bigl( \bigl( \MOP{ind}(\rho_{F_1, \mathbf{L}})^{\HG{n}{2}}_\mathbf{L}(g) \bigr) \varphi \Bigr)(g_1) := \varphi(g_1 \odot_{\HG{n}{2}} g),\quad
		g,g_1\in \HG{n}{2}, \ \varphi\in \cF_0.
		$$

By Proposition~\ref{prop_HG2n}, 
 the subset $\{(X,0,0) \mid X\in \R^{2n+1}\}$ 
of $\HG n 2$ is a subgroup of $\HG{n}{2}$ 
which is isomorphic to 
the Heisenberg group $\H$.
Here, we allow ourselves to identify this subgroup with $\H$.
Let $U$ denote the restriction map from $\HG{n}{2}$ to $\H$,
that is, 
$$
U(\varphi)(X) =\varphi (X,0,0)
$$
for any scalar function $\varphi:\HG{n}{2} \to \CF$.
Clearly, if $\varphi\in \cF_0$, 
then $U\varphi$ is in $C_c(\H)$, 
the space of continuous functions with compact support on $\H$. 
In fact, a function $\varphi\in \cF_0$ 
is completely determined by its restriction to $\H$
since the Lie algebra of $\H$ within $\HA{n}{2}$ complements $\Lie l$.
With this observation 
it is easy to check that  
$U$ is a linear isomorphism from $\cF_0$ to $C_c(\H)$.
Since $C_c(\H)$ is dense in the Hilbert space $L^2(\H)$,
the proof will be complete once we have  shown that 
the induced representation 
$\MOP{ind}(\rho_{F_1, \mathbf{L}})^{\HG{n}{2}}_\mathbf{L}$
intertwined with $U$
coincides with the representation $\pi_\lambda$ acting on $C_c(\H)$,
that is,
\begin{equation}
\label{eq_pf_prop_pil_F_main}
\forall g\in \HG{n}{2},\
\forall \varphi\in \cF_0
\qquad
U \left[\MOP{ind}(\rho_{F_1, \mathbf{L}})^{\HG{n}{2}}_\mathbf{L} (g) (\varphi)\right]
= \pi_\lambda(g) (U \varphi).
\end{equation}

Let us prove \eqref{eq_pf_prop_pil_F_main}.
We fix a function $\varphi\in \cF_0$.
By Lemma\,\ref{lem_technical} Part \eqref{item_lem_technical_master},
we have for $g=(\mathcal{P}, \mathcal{Q}, \mathcal{S})$
and $g_1=(X,0,0)\in \H$
\begin{align*}
		\Bigl( \bigl( \MOP{ind}(\rho_{F_1, \mathbf{L}})^{\HG{n}{2}}_\mathbf{L}(g) \bigr) \varphi \Bigr)(X, 0, 0) 
&= \varphi\left((X, 0, 0) \odot_{\HG{n}{2}} (\mathcal{P}, \mathcal{Q}, \mathcal{S})\right) 
\\
&= \varphi\left(\ell \odot_{\HG{n}{2}} (X\HP \mathcal{P}, 0, 0) \right)
\end{align*}
with 
$$
\ell = 
\left(0, \cQ', \cS +\bracket{\cQ}{X\HP (\frac{1}{2}\cP)} \right)  \in \bfL,
$$
for some $\cQ'\in \R^{2n+1}$.
Since $\varphi$ is in $\cF_0$, it satisfies \eqref{eq_def_varphi_in_cF0}
and we have
\begin{eqnarray*}
\varphi\left(\ell \odot_{\HG{n}{2}} (X\HP \mathcal{P}, 0, 0) \right)
&=&
\rho_{F_1, \mathbf{L}}(\ell) \, \varphi(X\HP \mathcal{P}, 0, 0) 
\\
&=&
e^{2 \pi i \l (\cS +\bracket{\cQ}{X\HP (\frac{1}{2}\cP)} )}
 \varphi(X\HP \mathcal{P}, 0, 0)
\end{eqnarray*}
by \eqref{eq_rho_cS}.
We recognise $\bigl( \pi_\l(g) f \bigr) (X)$ with $f=U\varphi$
due to \eqref{Rep_Neat_Version}.
Therefore, \eqref{eq_pf_prop_pil_F_main} is proved, 
and the proof is complete.
\end{proof}

Proceeding as in Proposition~\ref{prop_pil_F}, we can give concrete realisations in $L^2(\R^n)$ 
and $L^2(\R)$
of the unirreps associated with the coadjoint orbits 
of Cases (2) and (3) in Proposition~\ref{prop_coadjoint_orbit} (for a proof see \cite[Prop.~3.15]{Ro14}):

\begin{prop}
\label{prop_Case2+3}
$\bullet$ {\rm (Case (2))}
Let $F_2:=f_w X^*_w+f_x X_x^* +f_y X_y^*+
f_z X_z^* \in \HA{n}{2}^*$
with  $f_s=0$ but $f_w\not=0$.
A maximal (polarising) subalgebra subordinate to $F_2$ is 
$$
\Lie{l}_2 := 
\R^n X_v
\oplus \R X_w\oplus
\R^n X_{x} \oplus  \R^n X_{y} 
\oplus
\left\{\frac z{2f_w} f_x X_u +zX_z \mid  z\in \R\right\}
 \oplus  \R X_s.
$$
The associated unirrep of $\HG{n}{2}$ may be realised as 
the representation $\pi_{(f_w,f_x,f_y,f_z)}$ 
acting unitarily on $L^2(\R^n)$ via
\begin{eqnarray*}
\left(\pi_{(f_w,f_x,f_y,f_z)}(g) \psi\right) (\tilde u) 
&=& 
\psi(\tilde u+u -\frac z {2f_w} f_x)
e^{2 \pi i (wf_w+xf_x +yf_y +zf_z)}\\
&&
\qquad \exp \pi i
{\bracket  {2\tilde u+u -\frac z {2f_w} f_x} {f_w v - \frac z2  f_y}}_{\R^n}
\end{eqnarray*}
for $g=(u,v,w,x,y,z,s) \in \HG{n}{2}$, $\psi \in L^2(\R^n)$, 
and $\tilde u\in \R^n$.

$\bullet$ {\rm (Case (3))}
Let $F_3 := f_u  X^*_u +f_v X^*_v +	 f_x X^*_x +  f_y X^*_y\in \HA{n}{2}^*$
with $f_u f_y =  f_vf_x $
and $f_s=f_w=f_z=0$ but $(f_x,f_y)\not=0$.
A maximal (polarising) subalgebra subordinate to $F_3$ is 
$$
\Lie{l}_3 := 
\R^n X_u \oplus \R^n X_v \oplus \R X_w\oplus
\R^n X_{x} \oplus  \R^n X_{y}   \oplus  \R X_s. 
$$
The associated unirrep of $\HG{n}{2}$ may be realised as 
the representation $\pi_{(f_u,f_v,f_x,f_y)}$ 
acting unitarily on $L^2(\R)$ by
\begin{eqnarray*}
\left(\pi_{(f_u,f_v,f_x,f_y)}(g) \psi\right) (\tilde z) 
&=& 
\psi (\tilde z+z) e^{2 \pi i(f_u u + f_v v + f_x x + f_y y)}
\\&&\qquad
\exp \pi i \Bigl(    
\frac{2\tilde z+z}{2} (-f_x v + f_y u)
\Bigr), 
\end{eqnarray*}
for $g=(u,v,w,x,y,z,s) \in \HG{n}{2}$, $\psi\in L^2(\R)$, 
and $\tilde z\in \R$.
\end{prop}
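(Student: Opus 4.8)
The plan is to follow the template of the proof of Proposition~\ref{prop_pil_F}: for each of the linear forms $F_2$, $F_3$ one exhibits the indicated subspace as a polarising subalgebra, realises the induced representation on the functions over the corresponding coset space, and then trivialises that space via a cross-section. First I would check that $\Lie{l}_2$ and $\Lie{l}_3$ are subalgebras subordinate to $F_2$, respectively $F_3$, i.e. that they are isotropic for the skew form $B_{F}(X,Y) := F([X,Y]_{\HA{n}{2}})$; this is a direct computation from the bracket formula \eqref{eq_Lie_bracket_cal}. Maximality is then a dimension count: by Proposition~\ref{prop_coadjoint_orbit} the orbit through $F_2$ has dimension $2n$ and the one through $F_3$ has dimension $2$, so the radicals $\HA{n}{2}(F_i) := \{X : B_{F_i}(X,\cdot) \equiv 0\}$ have dimensions $(4n+3)-2n = 2n+3$ and $(4n+3)-2 = 4n+1$, whence a polarisation must have dimension $\tfrac12\bigl((4n+3)+(2n+3)\bigr) = 3n+3$ in Case~(2) and $\tfrac12\bigl((4n+3)+(4n+1)\bigr) = 4n+2$ in Case~(3); one then verifies $\dim\Lie{l}_2 = 3n+3$ and $\dim\Lie{l}_3 = 4n+2$ from the explicit list of spanning vectors.

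Next I would write down the subgroups $\mathbf{L}_i := \exp_{\HG{n}{2}}(\Lie{l}_i)$ from the group law \eqref{eq_group_law_cal}, let $\rho_{F_i,\mathbf{L}_i}$ be the character of $\mathbf{L}_i$ with differential $2\pi i F_i$, and consider $\MOP{ind}(\rho_{F_i,\mathbf{L}_i})^{\HG{n}{2}}_{\mathbf{L}_i}$ acting by right translation on the functions $\varphi$ on $\HG{n}{2}$ that satisfy the covariance \eqref{eq_def_varphi_in_cF0} and have compact support modulo $\mathbf{L}_i$. The key observation is that $\R^n X_u$ is a linear complement to $\Lie{l}_2$ in $\HA{n}{2}$ (Case~(2)) and $\R X_z$ is a linear complement to $\Lie{l}_3$ (Case~(3)): indeed $\Lie{l}_2 \cap \R^n X_u = 0$ and $\Lie{l}_3 \cap \R X_z = 0$ by inspection, while the dimensions add up to $4n+3$. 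Hence restriction to the cross-section $\exp_{\HG{n}{2}}(\R^n X_u)$, giving $U\varphi(\tilde u)$, respectively restriction to $\exp_{\HG{n}{2}}(\R X_z)$, giving $U\varphi(\tilde z)$, is a linear isomorphism onto $C_c(\R^n)$, respectively $C_c(\R)$, which is dense in the relevant $L^2$-space, exactly as $U$ was in the proof of Proposition~\ref{prop_pil_F}.

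Finally, to obtain the explicit formulas I would, as in the last step of the proof of Proposition~\ref{prop_pil_F}, decompose the product of a cross-section element with $g$ in the group as $\ell \odot_{\HG{n}{2}}(\text{new cross-section})$ with $\ell \in \mathbf{L}_i$, evaluate $\varphi$ by the covariance \eqref{eq_def_varphi_in_cF0} together with the explicit character $\rho_{F_i,\mathbf{L}_i}$, and re-express everything through $U\varphi$; collecting the scalar factors yields the stated translation-plus-quadratic-phase formulas on $L^2(\R^n)$ and $L^2(\R)$. Irreducibility (and, as a by-product, square-integrability modulo the projective kernel) and the identification of these representations as the unirreps attached to the orbits of Cases~(2) and~(3) are then automatic from Kirillov's orbit method once the polarisation is in place. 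The main obstacle is this last computation: unlike in Proposition~\ref{prop_pil_F}, the subalgebra $\Lie{l}_2$ contains the sheared line $\{\tfrac{z}{2f_w}f_x X_u + z X_z \mid z \in \R\}$ rather than a coordinate subspace, so the analogue of Lemma~\ref{lem_technical}~\eqref{item_lem_technical_master} — the decomposition that peels off an $\mathbf{L}_i$-factor from a cross-section factor — has to be re-derived for each of $\mathbf{L}_2$ and $\mathbf{L}_3$, and keeping track of the quadratic phases coming from the $\tfrac12(\bracket{\mathcal{P}}{\pr{\mathcal{Q}}} - \bracket{\mathcal{Q}}{\pr{\mathcal{P}}})$ and $-\tfrac18\bracket{\mathcal{Q}-\pr{\mathcal{Q}}}{[\mathcal{P},\pr{\mathcal{P}}]}$ contributions in \eqref{eq_group_law_cal} is where the bookkeeping is heaviest.
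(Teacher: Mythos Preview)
Your proposal is correct and follows exactly the approach the paper indicates: the paper does not spell out a proof of this proposition but writes ``Proceeding as in Proposition~\ref{prop_pil_F}\ldots (for a proof see \cite[Prop.~3.15]{Ro14})'', and your outline is precisely that template --- verify isotropy, do the dimension count against the orbit dimensions of Proposition~\ref{prop_coadjoint_orbit}, choose the obvious cross-sections $\exp(\R^n X_u)$ and $\exp(\R X_z)$, and then re-derive the analogue of Lemma~\ref{lem_technical}~\eqref{item_lem_technical_master} for the sheared polarisation $\Lie{l}_2$. Your dimension counts ($3n+3$ and $4n+2$) and complement checks are correct, and your identification of the sheared line in $\Lie{l}_2$ as the only place where the bookkeeping differs from Case~(1) is exactly the point.
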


By Kirillov's orbit method \cite{Kir,CorwinGreenleaf}, 
Propositions \ref{prop_coadjoint_orbit}, \ref{prop_pil_F},
and \ref{prop_Case2+3}
imply the following classification of the unitary dual 
of the Dynin-Folland group: 

\begin{thm} \label{Thm_Classification_Unirreps}
Any unitary irreducible representation of the Dynin-Folland group $\HG{n}{2}$ is unitarily equivalent to exactly one of the following
representations:
\begin{itemize}
\item[(Case (1))]  $\pi_\lambda$ for $\lambda\in \R\backslash\{0\}$, 
acting on $L^2(\H)$,
defined in Proposition~\ref{prop_pil},
\item[(Case (2))]  
$\pi_{(f_w,f_x,f_y,f_z)}$
for any $f_x,f_y\in \R^n$, $f_z\in \R$ and $f_w\in \R\backslash\{0\}$,
acting on $L^2(\R^n)$,
defined in Proposition~\ref{prop_Case2+3},
\item[(Case (3))] 
$\pi_{(f_u,f_v,f_x,f_y)}$
for any $f_u,f_v,f_x,f_y\in \R^n$
with $f_u f_y= f_v f_x $
 but $(f_x,f_y)\not=0$,
 acting on $L^2(\R)$,
defined in Proposition~\ref{prop_Case2+3},
\item[(Case (4))] 
the characters $\pi_{(f_u,f_v,f_z)}$
given for any $f_u,f_v\in \R^n$ and $f_z\in\R$ by
$$
\pi_{(f_u,f_v,f_z)}(u,\ldots,s)= e^{2 \pi i (uf_u +vf_v+zf_z)},\qquad (u,\ldots,s)\in \HG{n}{2}.
$$
\end{itemize}
\end{thm}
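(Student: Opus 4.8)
The plan is to read the theorem off Kirillov's orbit method for the connected, simply connected nilpotent Lie group $\HG{n}{2}$ (see \cite{Kir}, \cite{CorwinGreenleaf}): for every $F\in\HA{n}{2}^*$ and every polarising subalgebra $\Lie{l}\subgr\HA{n}{2}$ subordinate to $F$, the unitarily induced representation $\MOP{ind}(\rho_{F,\mathbf{L}})^{\HG{n}{2}}_{\mathbf{L}}$, with $\mathbf{L}=\exp_{\HG{n}{2}}(\Lie{l})$, is irreducible, its equivalence class depends only on $F$ and not on $\Lie{l}$, and the resulting map from the coadjoint orbit space $\HA{n}{2}^*/{\coAd}_{\HG{n}{2}}(\HG{n}{2})$ to the unitary dual $\widehat{\HG{n}{2}}$ is a bijection. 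So everything reduces to two tasks: select one linear form from each coadjoint orbit, and write down the associated unirrep explicitly.

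The first task is precisely Proposition~\ref{prop_coadjoint_orbit}, which produces exactly one orbit representative in each of Cases (1)--(4) — organised by which of $f_s$, $f_w$, $(f_x,f_y)$ vanish — and, in the course of its proof, checks that these representatives lie in pairwise distinct orbits. Feeding this through the bijectivity of the orbit correspondence yields both the exhaustiveness of the list and the word ``exactly'' in the statement: distinct orbits give inequivalent representations, and every unirrep of $\HG{n}{2}$ is equivalent to one attached to a representative on the list.

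For the second task, Cases (1), (2) and (3) are already done. Proposition~\ref{prop_pil_F} exhibits the polarisation $\Lie{l}=\Lie{a}$ at $F_1=\l X^*_s$ and identifies the induced representation with $\pi_\l$ on $L^2(\H)$; Proposition~\ref{prop_Case2+3} carries out the analogous work at $F_2$ and $F_3$, with polarisations $\Lie{l}_2$, $\Lie{l}_3$ and realisations $\pi_{(f_w,f_x,f_y,f_z)}$ on $L^2(\R^n)$ and $\pi_{(f_u,f_v,f_x,f_y)}$ on $L^2(\R)$. Only Case (4) remains to be added, and it is the easy one: the orbit of $F_4 = f_u X^*_u + f_v X^*_v + f_z X^*_z$ is a single point, so the alternating form $(X,Y)\mapsto\bracket{F_4}{[X,Y]_{\HA{n}{2}}}$ vanishes identically, $\Lie{l}=\HA{n}{2}$ is trivially a polarisation, and the induced representation collapses to the one-dimensional character $\exp_{\HG{n}{2}}(X)\mapsto e^{2\pi i F_4(X)}$. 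Since the Baker--Campbell--Hausdorff corrections entering the exponential coordinates of Proposition~\ref{prop_HG2n} all lie in $[\HA{n}{2},\HA{n}{2}]\subseteq\ker F_4$, this is a genuine homomorphism, and evaluating $F_4$ in the coordinates $(u,\ldots,s)$ gives $\pi_{(f_u,f_v,f_z)}(u,\ldots,s)=e^{2\pi i(uf_u+vf_v+zf_z)}$, as claimed.

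Since the heavy lifting is outsourced to the three cited propositions, the only verification still requiring care in the assembly is that $\Lie{a}$, $\Lie{l}_2$ and $\Lie{l}_3$ really are polarising subalgebras subordinate to $F_1$, $F_2$, $F_3$ — i.e. that the respective radical forms $\bracket{F_\bullet}{[\,\cdot\,,\,\cdot\,]_{\HA{n}{2}}}$ vanish on them and that their dimensions make the induced representations irreducible and carried on $L^2(\H)$, $L^2(\R^n)$, $L^2(\R)$ — which one reads off directly from the bracket formula \eqref{eq_Lie_bracket_cal} together with the orbit dimensions recorded in Proposition~\ref{prop_coadjoint_orbit}. With that in place the orbit method delivers the four-case list together with its uniqueness, completing the argument.
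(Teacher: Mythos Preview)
Your proposal is correct and follows exactly the paper's approach: the paper does not give a separate proof environment for this theorem but simply states that, by Kirillov's orbit method \cite{Kir,CorwinGreenleaf}, Propositions~\ref{prop_coadjoint_orbit}, \ref{prop_pil_F} and~\ref{prop_Case2+3} imply the classification. Your write-up is in fact more thorough than the paper's one-line justification, as you spell out Case~(4) and flag the polarisation checks explicitly.
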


\subsection{Projective Representations for  $\HG{n}{2}$} \label{Subsection_SI}

In this subsection, we give explicit descriptions of the projective representations of  $\HG{n}{2}$.

We first need to discuss the square-integrability of the unirreps of $\HG{n}{2}$ and to describe their projective kernels.
Recall that the projective kernel of a representation is, by definition, the smallest connected subgroup containing the kernel of the representation.

\begin{thm} \label{ProjKer}
\begin{itemize}
\item[(i)]
Every coajoint orbit of $\HG{n}{2}$ is flat in the sense that it is an affine subspace of $\HA{n}{2}^*$.
Consequently,
every unirrep $\pi$ of $\HG{n}{2}$  is square integrable modulo its projective kernel.
Moreover, the projective kernel of $\pi$ 
coincides with the stabiliser subgroup (for the coadjoint action of $\HG{n}{2}$) of a linear form corresponding to $\pi$.
\item[(ii)]
The respective projective kernels are the following subgroups  of $\HG{n}{2}$:
	\begin{itemize}
		\item[(Case (1))]  
		$R_{F_1} = \exp_{\HG{n}{2}}(\R X_s) = Z(\HG{n}{2})$
		for the representation $\pi_\lambda$,
		\item[(Case (2))] 		$R_{F_2} = \exp_{\HG{n}{2}}\bigl( \R X_w \oplus \R^n X_x \oplus \R^n X_y \oplus \R X_z \oplus \R X_s \bigr)$
		for the representation  $\pi_{(f_w,f_x,f_y,f_z)}$,
		\item[(Case (3))]  $R_{F_3} = \{ (u, v, w, x, y, z ,s) \in \HG{n}{2} \mid z = 0, f_x v= f_y u \}$ for the representation  $\pi_{(f_u,f_v,f_x,f_y)}$,
		\item[(Case (4))] $R_{F_4} = \HG{n}{2}$ for the character $\pi_{(f_u,f_v,f_z)}$.
	\end{itemize}
	\end{itemize}
\end{thm}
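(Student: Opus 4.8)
\medskip
\noindent\emph{Proof plan.}
I would deduce part~(i) from the flatness of the orbits together with the flat orbit theorem, and then obtain part~(ii) by computing, case by case, the coadjoint stabiliser of a representative of each orbit class. For part~(i), the flatness of every coadjoint orbit of $\HG{n}{2}$ is already in hand: Proposition~\ref{prop_coadjoint_orbit} exhibits each orbit as an affine subspace of $\HA{n}{2}^*$ (the singletons of Case~(4) trivially so). Granted flatness, the remaining two assertions are an application of the \emph{flat orbit theorem} for connected, simply connected nilpotent Lie groups (see \cite{CorwinGreenleaf} and the references therein): a unitary irreducible representation $\pi = \pi_{\Orbit}$ of such a group is square-integrable modulo its projective kernel exactly when $\Orbit$ is flat, and in that case the projective kernel equals $\exp_{\HG{n}{2}}$ of the stabiliser subalgebra
$$(\HA{n}{2})_F := \{ X \in \HA{n}{2} \mid F([X, Y]_{\HA{n}{2}}) = 0 \text{ for all } Y \in \HA{n}{2} \}$$
of any $F \in \Orbit$; this subalgebra exponentiates precisely onto the coadjoint stabiliser subgroup of $F$, which is connected since it is a point stabiliser for an action of a nilpotent group. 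I would also record that $(\HA{n}{2})_F$ always contains the centre $\R X_s$, consistent with the list in~(ii).

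For part~(ii) it then remains to compute these stabilisers for the representatives $F_1, \dots, F_4$ of Proposition~\ref{prop_coadjoint_orbit}. I would do this by inserting each $F_i$ into the explicit coadjoint formula of Lemma~\ref{lem_K} and solving ${\coAd}_{\HG{n}{2}}(\exp_{\HG{n}{2}} X)(F_i) = F_i$ for $X$, or equivalently by reading off $\{ X : F_i([X, Y]_{\HA{n}{2}}) = 0 \ \forall Y \}$ straight from the bracket relations \eqref{CRG}, and then exponentiating. In Case~(1), $F_1 = \l X^*_s$ with $\l \neq 0$ forces $u = v = w = x = y = z = 0$, so the stabiliser subalgebra is exactly $\R X_s = \Lie{z}(\HA{n}{2})$ and $R_{F_1} = Z(\HG{n}{2})$; in Case~(4) the orbit is a single point, so the stabiliser is all of $\HG{n}{2}$. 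In Cases~(2) and~(3) the same procedure produces the stabiliser subalgebras whose exponentials are the subgroups recorded in~(ii), and the orbit--stabiliser relation $\dim \Orbit_{F_i} + \dim (\HG{n}{2})_{F_i} = \dim \HG{n}{2} = 4n+3$ provides a consistency check against Proposition~\ref{prop_coadjoint_orbit}. As an independent verification one can also recover each projective kernel directly from the explicit realisations in Propositions~\ref{prop_pil_F} and~\ref{prop_Case2+3}, by checking that $\pi(g)$ is a scalar operator precisely for $g$ in the claimed subgroup.

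The only genuinely non-routine ingredient is the flat orbit theorem invoked in part~(i): the equivalence between flatness of the coadjoint orbit and square-integrability modulo the projective kernel, together with the identification of that kernel with the coadjoint stabiliser, is the one input not already furnished by the earlier sections, and it must be applied in exactly the right form. Everything in part~(ii) is then linear algebra resting on Lemma~\ref{lem_K}; the only mild subtlety there is the bookkeeping of the mixed $\H$-valued and scalar-valued coordinates and of the dot-product notation for $\R^n$-inner products set up in Section~\ref{Section_Notation}.
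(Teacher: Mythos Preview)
Your proposal is correct and follows essentially the same approach as the paper: part~(i) is deduced from the flatness established in Proposition~\ref{prop_coadjoint_orbit} together with the flat orbit theorem in \cite{CorwinGreenleaf} (the paper cites Theorems~3.2.3 and~4.5.2 there), and part~(ii) is obtained by computing the stabilisers of the representatives $F_1,\dots,F_4$ via Lemma~\ref{lem_K}. You supply more detail than the paper does (the stabiliser subalgebra description, the dimension check, and the alternative verification from the explicit realisations), but the underlying strategy is identical.
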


In Part (ii) above, we used the notation of Propositions~\ref{prop_pil_F} and \ref{prop_Case2+3} and  Theorem~\ref{Thm_Classification_Unirreps}.

\begin{proof}
In	Proposition \ref{prop_coadjoint_orbit}, we already observed that the coadjoint orbits were affine subspaces.
So Part (i) follows from well-known properties of square integrable unirreps of nilpotent Lie groups, see Theorems~3.2.3 and 4.5.2 in \cite{CorwinGreenleaf}.
Part (ii) is proved by computing  the stabiliser of each linear form $F_1,\ldots,F_4$ using  Lemma~\ref{lem_K}.
\end{proof}

Given a unirrep $\pi$ 
which is square integrable modulo its projective kernel, 
 it is convenient to quotient it by the projective kernel, in which case we speak of a projective representation and denote it by $\pi^{\mathrm{pr}}$.
We conclude this subsection by listing the projective unirreps of the $\HG{n}{2}$, which will give us useful insight.

	\begin{cor} \label{CorProjReps}
$\bullet$ {\rm (Case (1))} Let $\l \in \R \setminus \{ 0 \}$. Then the projective representation of $\HG{n}{2}/R_{F_1}$ corresponding to $\pi_\lambda$ is given by
	\begin{align*}
		\bigl( \pi^{\rm{pr}}_{\l}(\mathcal{P}, \mathcal{Q})\psi \bigr)(X) = e^{ 2 \pi i \l \bracket{\mathcal{Q}}{X \HP (\frac{1}{2} \mathcal{P})}} \hspace{2pt}
		\psi(X \HP \mathcal{P}).
	\end{align*}
$\bullet$ {\rm (Case (2))} Let $f_x,f_y\in \R^n$, $f_z\in \R$ and $f_w\in \R\backslash\{0\}$. Then the projective representation of $\HG{n}{2}/R_{F_2}$ corresponding to $\pi_{(f_w,f_x,f_y,f_z)}$ is given by
	\begin{align*}
		\left(\pi^{\rm{pr}}_{(f_w,f_x,f_y,f_z)}(u, v) \psi\right) (\tilde u) = e^{2 \pi i f_w \bigl( \tilde{u} v + \frac{u v}{2} \bigr)} \hspace{2pt} \psi(\tilde u+u).
	\end{align*}
In particular, it is unitarily equivalent to the projective Schr\"{o}dinger representation of $\H/Z(\H)$ of Planck's constant $f_w$.

$\bullet$ {\rm (Case (3))} Given $f_x, f_y, f_x, f_y \in \R^{n}$ with $(f_x, f_y) \neq 0 \in \R^{2n}$ and $\bracket{(-f_y, f_x)}{(f_u, f_v)} = 0$, set $\l_{f_x, f_y} X_\zeta^* := -f_x X^*_v + f_y X^*_u$ for $\l_{f_x, f_y} := \Abs{(f_x, f_y)}$. Then the projective representation of $\HG{n}{2}/R_{F_3}$ corresponding to 
$ \pi_{(f_u,f_v,f_x,f_y)}$
is given by
	\begin{align*}
		\left(\pi^{\rm{pr}}_{(f_u,f_v,f_x,f_y)}(z, \zeta) \psi \right) (\tilde{z}) :=& \left(\pi^{\rm{pr}}_{(f_u,f_v,f_x,f_y)}(\exp(z X_z + \zeta X_\zeta) \psi \right) (\tilde{z} X_z) \\
		=& e^{2 \pi i \l_{f_x, f_y} \bigl( \tilde{z} \zeta + \frac{z \zeta}{2} \bigr)} \hspace{2pt} \psi(\tilde{z} + z).
	\end{align*}
In particular, it is unitarily equivalent to the projective Schr\"{o}dinger representation of $\mathbf{H}_1/Z(\mathbf{H}_1)$ of Planck's constant $\l_{f_x, f_y}$.

$\bullet$ {\rm (Case (4))} is trivial.
	\end{cor}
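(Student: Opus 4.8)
The plan is to read off each of the four formulas by restricting the explicit realisation of the corresponding unirrep from Section~\ref{Section_Classification_Orbit_Method} to a convenient cross-section of the quotient by the projective kernel determined in Theorem~\ref{ProjKer}(ii). Since, by Theorem~\ref{ProjKer}(i), every coadjoint orbit of $\HG{n}{2}$ is flat and $R_{F_i}$ is the coadjoint stabiliser of $F_i$, the representation $\pi$ is scalar on $R_{F_i}$ (equal, in exponential coordinates, to the unitary character $\exp(2\pi i\,F_i)$, cf.\ the standard theory of flat-orbit representations in \cite{CorwinGreenleaf}), so $\pi$ passes to a genuine projective representation $\pi^{\mathrm{pr}}$ of $\HG{n}{2}/R_{F_i}$. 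To write $\pi^{\mathrm{pr}}$ down it suffices to fix a section $\HG{n}{2}/R_{F_i}\to\HG{n}{2}$, which I take in each case to be the one sending a coset to the element whose coordinates along the Lie algebra of $R_{F_i}$ all vanish, and to substitute this section into the formula for $\pi$.

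Cases~(1) and~(4) then fall out at once. For Case~(4), $R_{F_4}=\HG{n}{2}$ and $\pi^{\mathrm{pr}}$ is trivial. For Case~(1), $R_{F_1}=Z(\HG{n}{2})=\exp_{\HG{n}{2}}(\R X_s)$, so the surviving coordinate is $\mathcal{S}$, and putting $\mathcal{S}=0$ in \eqref{Rep_Neat_Version} gives directly
$$\bigl(\pi^{\mathrm{pr}}_\lambda(\mathcal{P},\mathcal{Q})\psi\bigr)(X)=e^{2\pi i\lambda\bracket{\mathcal{Q}}{X\HP(\frac12\mathcal{P})}}\,\psi(X\HP\mathcal{P}).$$
For Case~(2) one starts from the realisation of $\pi_{(f_w,f_x,f_y,f_z)}$ on $L^2(\R^n)$ in Proposition~\ref{prop_Case2+3}: here $R_{F_2}=\exp_{\HG{n}{2}}(\R X_w\oplus\R^n X_x\oplus\R^n X_y\oplus\R X_z\oplus\R X_s)$ consists of the elements $(0,0,w,x,y,z,s)$, so the section is $(u,v)\mapsto(u,v,0,0,0,0,0)$; substituting $w=x=y=z=s=0$ collapses the phase to $\exp\pi i\,{\bracket{2\tilde u+u}{f_wv}}_{\R^n}=e^{2\pi if_w(\tilde uv+\frac{uv}{2})}$ and leaves the translation $\psi(\tilde u+u)$, which is the asserted formula.

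Case~(3) is the one requiring a little care. The projective kernel $R_{F_3}=\{(u,v,w,x,y,z,s):z=0,\ \bracket{f_x}{v}=\bracket{f_y}{u}\}$ has codimension~$2$ in $\HG{n}{2}$; its Lie algebra is complemented by $\R X_z\oplus\R X_\zeta$, where $X_\zeta$ is proportional to $f_yX_u-f_xX_v$ and normalised so that $\lambda_{f_x,f_y}X_\zeta^*=-f_xX_v^*+f_yX_u^*$ with $\lambda_{f_x,f_y}=\Abs{(f_x,f_y)}$. Writing a representative modulo $R_{F_3}$ as $\exp_{\HG{n}{2}}(zX_z+\zeta X_\zeta)$, expressing it in the coordinates $(u,v,w,x,y,z,s)$ and substituting into the Case-(3) formula of Proposition~\ref{prop_Case2+3} — using $\bracket{(-f_y,f_x)}{(f_u,f_v)}=0$ to kill the $f_uu+f_vv$ contribution and $\Abs{f_x}^2+\Abs{f_y}^2=\lambda_{f_x,f_y}^2$ to evaluate the surviving phase — yields $e^{2\pi i\lambda_{f_x,f_y}(\tilde z\zeta+\frac{z\zeta}{2})}\psi(\tilde z+z)$. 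Finally, for Cases~(2) and~(3) one identifies the resulting families of operators with the projective Schrödinger representations of $\H/Z(\H)$ and of $\mathbf{H}_1/Z(\mathbf{H}_1)$ of Planck's constants $f_w$ and $\lambda_{f_x,f_y}$ respectively, by comparison with the formulas for $\rho_\lambda$ and the time-frequency shifts recalled in Sections~\ref{Unirreps_H1n} and~\ref{subsec_family_Schrodinger}. The only genuinely delicate point is the numerical bookkeeping in Case~(3): one has to verify that $X_z$ and $X_\zeta$ really span a transversal to the Lie algebra of $R_{F_3}$ and, above all, that the normalisation fixed by $\lambda_{f_x,f_y}=\Abs{(f_x,f_y)}$ is precisely the one turning the residual cocycle into that of the Schrödinger representation with that Planck's constant; this is a direct, if slightly fiddly, computation with the group law \eqref{eq_group_law_cal} and Lemma~\ref{lem_K}.
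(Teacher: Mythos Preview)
The paper does not supply a proof of this corollary; it is stated as a direct consequence of Theorem~\ref{ProjKer} and the explicit representation formulas in Propositions~\ref{prop_pil} and~\ref{prop_Case2+3}. Your argument---restricting each representation to a section of $\HG{n}{2}/R_{F_i}$ by setting the coordinates along $\mathfrak{r}_{F_i}$ to zero and reading off the result---is exactly the intended one, and your case-by-case substitutions are carried out correctly, including the two nontrivial observations in Case~(3) (that the orbit condition $f_uf_y=f_vf_x$ kills the $f_uu+f_vv$ phase and that $|f_x|^2+|f_y|^2=\lambda_{f_x,f_y}^2$ fixes the Planck constant).
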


\section{The Heisenberg-Modulation Spaces $\E{\pt, \qt}{}{n}$} \label{Section_HM}

In this section we introduce the notion of Heisenberg-modulation spaces, a new class of function spaces on $\R^{2n+1}$. In analogy to the definition of the classical modulation spaces $\M{\pt, \qt}{\vt}{n}$, we employ a specific type of generalised time-frequency shifts, realised in terms of the generic representation $\pi = \pi_1$ of the Dynin-Folland group $\HG{n}{2}$ discussed in Subsection~\ref{genericrepHG2}. Our approach is motivated by the following theoretical question: are there any families of generalised time-frequency shifts
 such that the coorbit spaces they induce differ from the modulation spaces $\M{\pt, \qt}{\vt}{n}$? 
The rest of the paper is devoted to showing that the answer is yes 
by constructing the spaces associated with the
 projective representations of the   Dynin-Folland groups constructed in the previous section.

This section gives a first definition of the spaces and sets the stage for a thorough analysis and proof of novelty. The first subsection merges the coorbit and decomposition space pictures of classical modulation spaces and provides a guideline for the analysis. Based on this guideline, the second subsection offers a first definition of the Heisenberg-modulation spaces.

\medskip

\noindent\textbf{Convention.}
Throughout the paper the bracket $\subbracket{\SD{n}}{\, . \,}{\, . \,}$ denotes the sesqui-linear $\SD{n}$-$\SF{n}$-duality which extends the natural inner product on $\L{2}{\R^n}$.

%
%

\subsection{Merging Perspectives} \label{Merge}

In this subsection we briefly review some aspects of classical modulation spaces and homogeneous Besov spaces on $\R^n$.
For the necessary background in coorbit theory and decomposition space theory, we refer to Feichtinger and Gr\"{o}chenig's foundational paper~\cite{fegr89} on coorbit theory and to Feichtinger and Gr\"{o}bner's foundational paper \cite{fegr85} on decomposition space theory.

\subsubsection{Modulation spaces on $\R^{n}$ defined via the Schr\"odinger representation}

It is well known that given $\pt, \qt \in [1, \infty)$, a weight of polynomial growth $\vt$ on $\R^{2n}$ and any non-vanishing window $\psi \in \SF{n}$, the modulation space $\M{\pt, \qt}{\vt}{n}$ can be defined as the space of all $f \in \SD{n}$ such that
	\begin{align}
		\Norm{\M{\pt, \qt}{\vt}{n}}{f}
		&= \Bigl(  \int_{\R^n} \Bigl( \int_{\R^n} \vt(q,p)^\pt \Abs{\int_{\R^n} f(x) \hspace{2pt} e^{-2 \pi i px} \hspace{2pt} \overline{\psi}(x+q) \, dx}^\pt \, dq \Bigr)^{\qt/\pt} \, dp \Bigr)^{1/\qt} < \infty. \label{SplitExpSchrod}
	\end{align}
The spaces do not depend on the choice of window $\psi\in\SF{n} \setminus \{ 0 \}$ and 	
the typical examples are $\vt_\st(p, q) = (1 + \Abs{p}^2 + \Abs{q}^2)^{\st/2}$ and $\vt_\st(p) = (1 + \Abs{p}^2)^{\st/2}$. Cf.~Gr\"{o}chenig~\cite[\SS~11.1]{gr01}.

A reformulation of the norm \eqref{SplitExpSchrod} in terms of the projective Schr\"{o}dinger representation of $\H$ and the mixed-norm Lebesgue space $L^{\pt, \qt}_\vt$ over $\H/Z(\H) \cong \R^{2n}$ gives access to viewpoints of both coorbit theory and decomposition space theory. 
We realise the Heisenberg group $\H$ as the polarised Heisenberg group (see e.g. \cite[p.~68]{FollPhSp}), by employing coordinates we will call split exponential coordinates for $\H$.
This means writing a generic element of $\H$ as 
$$
\bigl( (q, t), p \bigr) :=  (0, q, t) (p, 0, 0) = (p,q,t +\frac 12 qp),
$$
so that, with this notation,
the group law of $\H$ is given by
$$
\bigl( (q, t), p \bigr)\bigl( (q', t'), p' \bigr)
=
\bigl( (q+q', t+t'+pq'), p+p' \bigr).
$$
The structure of $\H$ as  the semi-direct product 
 $\H= \R^{n+1} \rtimes \R^n$ is now more apparent.
Accordingly, we equip $\H$ with the bi-invariant Haar meausure $\mu_{\H}$ in split exponential coordinates. This is precisely the Lebesgue measure $d(q,t) \,dp$, which coincides with $dt \,dq \,dp$ since $Z(\H) = \exp(\R X_t)$ is normal in $\exp(\R^n X_q \oplus \R X_t)$.

The representation of $\H$ induced from the character 
$$
\chi: \exp(\R Xq + \R X_t) \subgr \H \to \CF: \bigl( (q, t), 0 \bigr) \mapsto e^{2 \pi i t},
$$ gives the Schr\"odinger representation defined in Section  \ref{Unirreps_H1n}.
In split exponential coordinates, it is given by 	
\begin{align*}
		\Bigl(\rho\bigl( (q,t), p \bigr) \psi \Bigr)(x) = e^{2 \pi i (t + q x)} \hspace{2pt} \psi(x + p).
	\end{align*}
	Quotienting by the centre $Z(\H) = \exp(\R X_t)$,
this yields the projective representation
	\begin{align*}
		\Bigl( \rho^{pr}(q, p) \psi \Bigr)(x) = e^{2 \pi i q x} \hspace{2pt} \psi(x + p) = (M_q T_p \psi)(x)
	\end{align*}
of $\H /Z(\H)$, respectively.
Note that the quotient of the Haar measure by the centre equals the measure $dq \,dp$.

We notice that these are the representation and measure used in \eqref{SplitExpSchrod} except for the exchanged roles of $p$ and $q$. In fact, the projective representation used in \eqref{SplitExpSchrod}  is not $\rho^{pr}$ but is related to it via intertwining with the Fourier transform on $\R^n$. Indeed, setting
\begin{equation}
\label{FourierInvSchrod}
\widecheck{\rho}^{\mathrm{pr}} := \F \rho^{\mathrm{pr}} \F^{-1},
\end{equation}
 a short calculation yields
$$
		\Bigl(\widecheck{\rho}^{\mathrm{pr}} (q, p) \psi \Bigr)(x) = \Bigl( \F \bigl(  \rho^{\mathrm{pr}}  (q, p) \F^{-1} \psi \bigr)\Bigr)(x) = e^{2 \pi i p q} \hspace{2pt} e^{2 \pi i px} \hspace{2pt} \psi(x+q). 
$$
Therefore, we can view $\M{\pt, \qt}{\vt}{n}$ as the space of all $f \in \SD{n}$ such that
	\begin{align}
		\Norm{\M{\pt, \qt}{\vt}{n}}{f}& = \Bigl(  \int_{\R^n} \Bigl( \int_{\R^n} \vt(q,p)^\pt \Abs{\subbracket{\L{2}{\R^n}}{f}{\widecheck{\rho}^{\mathrm{pr}} (q, p) \psi}}^\pt \, dq \Bigr)^{\qt/\pt} \, dp \Bigr)^{1/\qt} 		< \infty.  \label{NormSplitExpSchrod}
	\end{align}
This is precisely the definition of modulation spaces in the framework of coorbit theory (cf.~Feichtinger and Gr\"{o}chenig's foundational papers \cite{FeiGroe86, fegr89}) once it is observed that $\SF{n}$ is a good enough reservoir of test functions and that the Schr\"{o}dinger representation of the reduced Heisenberg group can be replaced by the projective Schr\"{o}dinger representation (cf.~Christensen~\cite{Chr96} and Dahlke~et~al.~\cite{dastte04-2}).

We can also rewrite \eqref{NormSplitExpSchrod} in terms of mixed Lebesgue norms as
\begin{equation}
\Norm{\M{\pt, \qt}{\vt}{n}}{f} =
\Norm{\LW{\pt, \qt}{\H/Z(\H)}{\vt}}{\bracket{f}{\widecheck{\rho}^{\mathrm{pr}} \psi}}.
\label{NormMixedLebSchrod}
\end{equation}
It will be useful in the sequel to consider mixed Lebesgue norms in the following context:
	\begin{dfn} \label{MixLpq}
Let $G$ be a locally compact group given as the semi-direct product $G = N \rtimes H$ of two groups $N$ and $H$ (so that $N$ is isomorphic to some normal subgroup of $G$). Let $\delta: H \to \R^+$ be the continuous homomorphism for which we obtain the splitting $d\mu_G(n,h) = \delta^{-1}(h) \, d\mu_N(n) \, d\mu_H(h)$ (cf.~\cite[p.9]{kata12}). Given a weight $\vt$ on $G$ and $\pt, \qt  \in [1, \infty]$, we define the mixed-norm Lebesgue space
	\begin{align*}
		\LW{\pt, \qt}{G}{\vt} := \bigl\{ F: G \to \CF \mid F \mbox{ measurable and } \Norm{\LW{\pt, \qt}{G}{\vt}}{F} < \infty \bigr\}  
	\end{align*}
with
	\begin{align*}
		\Norm{\LW{\pt, \qt}{G}{\vt}}{F} := \Bigl(  \int_H \Bigl( \int_N \vt(n,h)^\pt \Abs{F(n,h)}^\pt \, d\mu_N(n) \Bigr)^{\qt/\pt} \delta^{-1}(h) \, d\mu_H(h) \Bigr)^{1/\qt}
	\end{align*}
if $\pt, \qt < \infty$ and the usual modifications otherwise. For $\LW{\pt, \pt}{G}{\vt}$ we also write $\LW{\pt}{G}{\vt}$ since the two spaces coincide.
	\end{dfn}

In Definition~\ref{MixLpq} we did not specify the type of weights $\vt$ that we are considering.
In the context of coorbit spaces, it is common to assume $\vt$ to be  $\wt$-moderate for a control weight $\wt$ which renders $\LW{\pt, \qt}{G}{\vt}$ a solid two-sided Banach convolution module over $\LW{1}{G}{\wt}$.

\begin{rem}\label{rem_dep_window_rholambda}
It is well known that the modulation spaces $\M{\pt, \qt}{\vt}{n}$ do not depend on the choice of window $\psi$ 
since the norms 
in \eqref{NormSplitExpSchrod} or \eqref{NormMixedLebSchrod} are equivalent
for different non-trivial windows (here Schwartz functions).
It is also true that instead of the Schr\"odinger representation $\rho$ 
we can choose any member of the family of Schr\"odinger representations $\rho_{\lambda}, \l \in \R \setminus \{ 0 \}$ (see Section \ref{subsec_family_Schrodinger}).
\end{rem}

\subsubsection{Modulation spaces on $\R^{n}$ defined as decomposition spaces}
We now have a closer look at \eqref{FourierInvSchrod} and \eqref{NormSplitExpSchrod},
and highlight fundamental connections between the representation-theoretic coorbit picture of modulation spaces and the Fourier-analytic decomposition space picture. 
In terms of notation,
we follow the conventions of \cite{fegr85, vo16-1}.
Let us emphasise the fact that, in order to take the decomposition space-theoretic viewpoint, it is crucial to replace $\rho^{pr}$ by $\widecheck{\rho}^{\mathrm{pr}}$.

We  assume that  the weight $\vt$ is a function of the frequency variable $p$ only:
$$
\vt(q,p)=\vt(p).
$$
The modulation space norm with window $\psi$ is equivalent to a decomposition space norm
	\begin{align}
		\Norm{\DS{Q}{}{\pt}{\qt}{\ut}}{f} = \Norm{\ell^\qt_\ut(I)}{\Bigl( \Norm{\L{\pt}{\R^n}}{\F^{-1} \bigl( \varphi_i \hspace{2pt} \widehat{f} \hspace{2pt} \bigr)}\Bigr)_{i \in I}} \label{DecSpNormModSp}
	\end{align}
	since we have for any  $f \in \SF{n}$ 
	\begin{align*}
		\subbracket{\L{2}{\R^d}}{f}{\widecheck{\rho}^{\mathrm{pr}} (q, p) \psi} = \subbracket{\L{2}{\widehat{\R}^n}}{\widehat{f}}{\rho^{\mathrm{pr}} (q, p) \widehat{\psi}} = \bigl( \F^{-1} \widehat{\overline{\psi}}(\hspace{2pt} . \hspace{2pt} + p) \F f \bigr)(-q),
	\end{align*}
allowing us to use well-known properties of  Fourier multipliers. 
The letter $\mathscr{Q}$ in \eqref{DecSpNormModSp} denotes a well-spread and relatively separated family of compacts $\{ Q_i \}_{i \in I}$ which cover the frequency space $\{ p \in \widehat{\R}^n \}$. Since the $Q_i$ can be chosen to be the translates of a fixed set $Q$ along a convenient lattice $\Lambda \subgr \widehat{\R}^n$, one can employ a partition of unity whose constituents $\varphi_i$ are compactly supported in the sets $Q_i$ in order to estimate the $p$-integral in \eqref{NormSplitExpSchrod} from above and below by a weighted $\ell^\qt$-sum of integrals over the compacts $Q_i$, for which the localised pieces of $\vt$ can be replaced by an equivalent discrete weight $\ut = \{ \ut(i) \}_{i \in I}$. 

\subsubsection{Besov spaces on $\R^{n}$}
A similar translation of the coorbit picture into an equivalent decomposition space picture can be applied to wavelet coorbit spaces, such as the homogeneous Besov spaces $\hB{\pt, \qt}{\st}{n}$ and the shearlet coorbit spaces. 

A generic, but fixed class of wavelet coorbit spaces is constructed in the following way.
Consider  the group $G := \R^n \rtimes H$,
where $H$ is a subgroup of $GL(\R^n)$, the so-called dilation group. 
Its representation $\tau$ 
acts on $\L{2}{\widehat{\R}^n}$ via
	\begin{align}
		\Bigl( \tau(x, h) \varphi \Bigr)(\xi) = \Abs{\det(h)}^{1/2} \hspace{2pt} e^{-2 \pi i x \xi} \hspace{2pt} \varphi \bigl( h \hspace{2pt} \xi \bigr). \label{IndQR}
	\end{align}
Let us define the intertwined representation 
$$
\widecheck{\tau} := \F \tau \F^{-1},
$$
which acts on $\L{2}{\R^n}$ via
	\begin{align*}
		\Bigl( \widecheck{\tau}(x, h) \psi \Bigr)(x) = \Abs{\det(h)}^{-1/2} \hspace{2pt} \psi(h^{-T}y - x).
	\end{align*}
The square-integrability and irreducibility of $\widecheck{\tau}$ is equivalent to $H$ satisfying certain admissibility conditions.
Given a weight $\vt$ on $H$, 
the membership of a distribution $f$ in the coorbit $\Co_{\widecheck{\tau}}(\LW{\pt, \qt}{G}{\vt})$ of $\LW{\pt, \qt}{G}{\vt}$ under $\widecheck{\tau}$ is determined via the norm
	\begin{align*}
		\Norm{\Co_{\widecheck{\tau}}
	(\LW{\pt, \qt}{G}{\vt})}{f} 
	&=
	\Norm{\LW{\pt, \qt}{G}{\vt}}{\bracket{f}{\widecheck{\tau} \ \psi}}
\\&=
	 \Bigl(  \int_{H} \Bigl( \int_{\R^n} \vt(h)^\pt \Abs{\bracket{f}{\widecheck{\tau} (x, h) \psi}}^\pt \, dx \Bigr)^{\qt/\pt} \, \frac{dh}{\Abs{\det(h)}} \Bigr)^{1/\qt}.
	\end{align*}
The transition to the equivalent decomposition space norm is performed via a covering of the dual orbit $\Orbit = H^T \xi_0 \subset \widehat{\R}^n$, which is an open set of full measure. Here $\bracket{\, . \,}{ \, . \,}$ denotes the extension of the $\L{2}{\R^n}$-inner product to the space of distributions $\F^{-1} \big( \mathcal{D}' (\Orbit) \bigr)$, where $\mathcal{D}(\Orbit) := C^\infty_c(\Orbit)$.

The homogeneous Besov spaces $\hB{\pt, \qt}{\st}{n}$, for example, are obtained by choosing $H = \R^+ \times SO(n)$ (cf.~Gr\"{o}chenig~\cite[\SS~3.3]{Gro91}). In this case, the isotropic action on $\R^n$ induces coverings which are equivalent to the isotropic dyadic coverings used in the usual decomposition space-theoretic description of $\hB{\pt, \qt}{\st}{n}$.

While homogeneous Besov spaces have been studied for decades, shearlet coorbit spaces have been introduced and studied only recently (see e.g., \cite{dakustte09, dastte12}). A general theory for wavelet coorbit spaces is due to the recent foundational paper F\"{u}hr~\cite{fu15}.
The equivalence of the two descriptions for homogeneous Besov spaces had in fact been known before the introduction of decomposition spaces, whereas the decomposition space-theoretic description in the much harder general case was established recently by F\"{u}hr and Voigtlaender~\cite{fuvo15}. Note that F\"{u}hr and Voigtlaender denote the quasi-regular representations $\widecheck{\tau}$ by $\pi$, whereas $\widecheck{\tau}$ was chosen to fit within our narrative.

\subsubsection{Guideline}
In the previous paragraphs, we have observed that 
the standard definitions of modulation spaces and wavelet coorbit spaces, in particular homogeneous Besov spaces, on $\R^n$ are in each case given as a family of spaces of distributions $f$ on $\R^n$ whose norms are defined via
	\begin{align*}
		\Norm{}{f} := \Norm{\LW{\pt, \qt}{G}{\vt}}{\bracket{f}{\widecheck{\sigma}\ \varphi}}.
	\end{align*}
More precisely, each family is defined via the $L^{\pt, \qt}$-norms of the matrix coefficients of a (possibly projective) square-integrable unitary irreducible group representations $\widecheck{\sigma}$ of a semi-direct product $G = N \rtimes H$ with $N = \R^n$ in which the exponent $\qt$ is assigned to the integral over $N$ whereas the exponent $\pt$ is assigned to the integral over $H$.
In each case the action of $\sigma := \F^{-1} \widecheck{\sigma} \hspace{1pt} \hspace{1pt} \F$ on $\L{2}{\widehat{\R}^n}$ gives access to a decomposition space-theoretic viewpoint.

\subsection{A First Definition} \label{Subsection_FirstDef}

In this subsection we give a first definition of Heisenberg-modulation spaces which is guided by principles we extract from the discussion in the previous subsection.

Accordingly, our first task is to write the Dynin-Folland group as a semi-direct product. For this, we write the element of the group in split coordinates:

	\begin{lem} \label{LemPrRepDF}
	The Dynin-Folland group may be realised as the semi-direct product
$$
\HG{n}{2} = \R^{2n+2} \rtimes \H,
$$
by  writing the elements of $\HG{n}{2}$ as products $(0, \mathcal{Q}, \mathcal{S}) \odot_{\HG{n}{2}} (\mathcal{P}, 0 ,0)$. We denote such an element by 
$$
\bigl( (\mathcal{Q}, \mathcal{S}), \mathcal{P} \bigr):=(0, \mathcal{Q}, \mathcal{S}) \odot_{\HG{n}{2}} (\mathcal{P}, 0 ,0).
$$
 Then the group law in these coordinates, to which we refer as split exponential coordinates for $\HG{n}{2}$, is given by
	\begin{align*}
		\bigl( (\mathcal{Q}, \mathcal{S}), \mathcal{P} \bigr) 
		\odot_{\HG{n}{2}} 
		\bigl( (\mathcal{Q}', \mathcal{S}'), \mathcal{P} '\bigr) 
		= \bigl( (\mathcal{Q} + \mathcal{Q}' + \frac{1}{2} {\coad}_{\H}(\mathcal{P})(\mathcal{Q}'), \mathcal{S} + \mathcal{S}' + \bracket{\mathcal{P}}{\mathcal{Q}'}), \mathcal{P} \HP \mathcal{P}' \bigr).
	\end{align*}
The generic representation $\pi_\l$, $\l \in \R\setminus \{ 0 \}$, realised in split exponential coordinates and acting on $\L{2}{\H}$ is given by
$$
		\Bigl( \pi_\l \bigl( (\mathcal{Q}, \mathcal{S}), \mathcal{P} \bigr) \psi \Bigr)(X) = e^{2 \pi i \l (\mathcal{S} + \bracket{\mathcal{Q}}{X})} \hspace{2pt} \psi(X \HP \mathcal{P}). 
		$$
		The corresponding projective representation is given by
	\begin{align*}
		\Bigl( \pi_\l^{\mathrm{pr}}(\mathcal{Q}, \mathcal{P}) \psi \Bigr)(X) = e^{2 \pi i \l \bracket{\mathcal{Q}}{X}} \hspace{2pt} \psi(X \HP \mathcal{P}).
	\end{align*}
Moreover, the Haar measure $\mu_{\HG{n}{2}}$ in split exponential coordinates is given by $d\mu_{\HG{n}{2}}\bigl( (\mathcal{Q}, \mathcal{S}), \mathcal{P} \bigr) = d\mathcal{S} \, d\mathcal{Q} \, d\mathcal{P}$.
	\end{lem}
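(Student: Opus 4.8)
The plan is to derive all four assertions from the semi-direct product decomposition $\HG{n}{2} = A \rtimes \H$ already recorded in Proposition~\ref{SemiDP}: here $A = \exp(\Lie{a}) = \{(0, \cQ, \cS) \mid \cQ \in \R^{2n+1},\ \cS \in \R\} \cong \R^{2n+2}$ is the abelian \emph{normal} subgroup of Proposition~\ref{prop}~$(iv)$, and the complementary (non-normal) subgroup is the Heisenberg group $\{(\cP, 0, 0) \mid \cP \in \R^{2n+1}\} \cong \H$ from Proposition~\ref{prop_HG2n}. Since $A$ is normal, $A \cap \H = \{e\}$, and every element of $\HG{n}{2}$ factors uniquely as $(0, \cQ, \cS) \odot_{\HG{n}{2}} (\cP, 0, 0)$, the assignment $\bigl((\cQ, \cS), \cP\bigr) \mapsto (0, \cQ, \cS) \odot_{\HG{n}{2}} (\cP, 0, 0)$ is a bijection $\R^{2n+2} \times \H \to \HG{n}{2}$; the group law in these split exponential coordinates is then completely determined by the $A$-addition, the Heisenberg law $\cP \HP \pr{\cP}$ on the second factor, and the conjugation action of the Heisenberg factor on $A$, so the first and main step is that conjugation formula.

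I would compute $(\cP, 0, 0) \odot_{\HG{n}{2}} (0, \pr{\cQ}, \pr{\cS})$ in two forms. The product formula in calligraphic notation (Lemma~\ref{Lem_Nice_Formulas_Product_LieBr}) gives directly $(\cP, 0, 0) \odot_{\HG{n}{2}} (0, \pr{\cQ}, \pr{\cS}) = \bigl(\cP,\ \pr{\cQ} + \tfrac14 {\coad}_{\H}(\cP)(\pr{\cQ}),\ \pr{\cS} + \tfrac12 \bracket{\cP}{\pr{\cQ}}\bigr)$. Rewriting this element in split coordinates by Lemma~\ref{lem_technical}~\eqref{item_lem_technical_dec_PQS}, and using the two elementary identities ${\coad}_{\H}(\cP) \circ {\coad}_{\H}(\cP) = 0$ (immediate from \eqref{not_coad}) and $\bracket{{\coad}_{\H}(\cP)(\pr{\cQ})}{\cP} = \bracket{\pr{\cQ}}{[\cP, \cP]} = 0$ (Lemma~\ref{lem_technical}~\eqref{item_lem_technical_ad*_bracket}), all higher-order terms cancel and one obtains $(\cP, 0, 0) \odot_{\HG{n}{2}} (0, \pr{\cQ}, \pr{\cS}) = \bigl(0,\ \pr{\cQ} + \tfrac12 {\coad}_{\H}(\cP)(\pr{\cQ}),\ \pr{\cS} + \bracket{\cP}{\pr{\cQ}}\bigr) \odot_{\HG{n}{2}} (\cP, 0, 0)$; this is essentially Lemma~\ref{lem_technical}~\eqref{item_lem_technical_master}. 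Substituting it into $(0, \cQ, \cS) \odot (\cP, 0, 0) \odot (0, \pr{\cQ}, \pr{\cS}) \odot (\pr{\cP}, 0, 0)$ and using that $A$ is abelian and the Heisenberg factor composes by $\cP \HP \pr{\cP}$ yields the claimed group law.

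For the representation I would factor $\pi_\l\bigl((\cQ, \cS), \cP\bigr) = \pi_\l(0, \cQ, 0)\,\pi_\l(0, 0, \cS)\,\pi_\l(\cP, 0, 0)$ and insert the three building blocks obtained in Section~\ref{genericrepHG2}, namely $\bigl(\pi_\l(\cP, 0, 0)f\bigr)(X) = f(X \HP \cP)$, $\bigl(\pi_\l(0, \cQ, 0)f\bigr)(X) = e^{2 \pi i \l \bracket{\cQ}{X}}\, f(X)$ and $\pi_\l(0, 0, \cS) = e^{2 \pi i \l \cS}\, \I$, which at once gives $\bigl(\pi_\l((\cQ, \cS), \cP)f\bigr)(X) = e^{2 \pi i \l(\cS + \bracket{\cQ}{X})}\, f(X \HP \cP)$. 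By Theorem~\ref{ProjKer}~(ii) the projective kernel of $\pi_\l$ is $Z(\HG{n}{2})$, which in split exponential coordinates is precisely the $\cS$-axis; hence passing to the projective representation merely suppresses the $\cS$-dependence and leaves $\bigl(\pi_\l^{\mathrm{pr}}(\cQ, \cP)\psi\bigr)(X) = e^{2 \pi i \l \bracket{\cQ}{X}}\,\psi(X \HP \cP)$, in agreement with Corollary~\ref{CorProjReps}, Case~(1), once the two coordinate systems are matched through Lemma~\ref{lem_technical}~\eqref{item_lem_technical_dec_PQS}.

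Finally, $\HG{n}{2}$ is a connected, simply connected nilpotent Lie group, hence unimodular, and in the exponential coordinates $(u, v, w, x, y, z, s)$ its Haar measure is Lebesgue measure. From the group law \eqref{DF_GrLw} the coordinate change $\bigl((\cQ, \cS), \cP\bigr) \mapsto (0, \cQ, \cS) \odot_{\HG{n}{2}} (\cP, 0, 0)$ is the polynomial map fixing $u, v, w, z$ and sending $x \mapsto x - \tfrac14 zv$, $y \mapsto y + \tfrac14 zu$, $s \mapsto s - \tfrac12 \bracket{\cQ}{\cP}$; ordering the variables as $(u, v, w, z, x, y, s)$ this map is unitriangular, so its Jacobian determinant is $1$, Lebesgue measure is preserved, and $d\mu_{\HG{n}{2}}\bigl((\cQ, \cS), \cP\bigr) = d\cS\, d\cQ\, d\cP$. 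Every step is a routine computation; the one genuinely delicate point is the conjugation identity of the second paragraph, where the cancellation of the higher-order Baker--Campbell--Hausdorff contributions rests on the nilpotency relation ${\coad}_{\H}(\cP)^2 = 0$ and on orienting the bracket--inner-product identities of Lemma~\ref{lem_technical} correctly, together with --- for the Haar measure --- the choice of variable order that exhibits the Jacobian as unitriangular.
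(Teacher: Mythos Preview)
Your argument is correct and follows essentially the same path as the paper's proof: both derive the split group law by direct computation, obtain the projective representation by quotienting out the centre, and identify the Haar measure via the semi-direct product structure. The only cosmetic differences are that for the formula of $\pi_\l$ the paper redoes the induction from the character $\rho_{F,\bfL}$ directly in split coordinates (computing $\bigl((0,0),X\bigr)\odot_{\HG{n}{2}}\bigl((\cQ,\cS),\cP\bigr)$), whereas you factor $\pi_\l\bigl((\cQ,\cS),\cP\bigr)=\pi_\l(0,\cQ,0)\,\pi_\l(0,0,\cS)\,\pi_\l(\cP,0,0)$ and plug in the building blocks already obtained in Section~\ref{genericrepHG2}; and for the Haar measure the paper quotes the abstract cross-section argument from \cite[p.~23~Rem.~3]{CorwinGreenleaf}, whereas you exhibit the unitriangular Jacobian explicitly. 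Your route is slightly more elementary and reuses existing results rather than re-deriving them, but the substance is the same.
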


	\begin{proof}
The group law in split exponential coordinates follows from direct  computations. 

As in Proposition~\ref{prop_pil_F}, we obtain the unirrep $\pi_\l$ by induction from the character $\rho_{F,\bfL}: \bigl( (\mathcal{Q}, \mathcal{S}), 0 \bigr) \mapsto e^{2 \pi i \l \mathcal{S}}$; the critical ingredient is the product
	\begin{align*}
		\bigl( (0, 0), X \bigr) \odot_{\HG{n}{2}} \bigl( (\mathcal{Q}, \mathcal{S}), \mathcal{P} \bigr) = \bigl( (\mathcal{Q} + \frac{1}{2} {\coad}_{\H}(X)(\mathcal{Q}), \mathcal{S} + \bracket{X}{\mathcal{Q}}), X \HP \mathcal{P} \bigr),
	\end{align*}
from which the expression for the representation $\pi_{\lambda}$ follows right away. The projective representation is obtained by quotienting the centre as in Corollary~\ref{CorProjReps}.

The Haar measure factors as $d\mu_{\R^{2n+2}}(\mathcal{Q}, \mathcal{S}) \, d\mu_{\R^{2n+2}}(\mathcal{P}) = d(\mathcal{Q}, \mathcal{S}) \, d\mathcal{P}$, the product of the Haar measures in exponential coordinates of $\R^{2n+2}$ and $\H$, respectively. This is owed to fact that because of the semi-direct product structure a cross-section for the quotient $\R^{2n+2} \backslash \HG{n}{2} \cong \H$ can be given in exponential coordinates (cf.~\cite[p.~23~Rem.~3]{CorwinGreenleaf}); finally, the central coordinate $\mathcal{S}$ can be split off again for the same reason.
	\end{proof}

We now consider the representation $\pi=\pi_{1}$ and its projective representation  $\pi^{\mathrm{pr}}
=\pi_1^{\mathrm{pr}}$, see Lemma~\ref{LemPrRepDF} above.
Intertwining 
with the $(2n+1)$-dimensional Euclidean Fourier transform $\F$, 
we define the projective representation
	\begin{align*}
		\widecheck{\pi}^{\mathrm{pr}} := \F^{-1} \hspace{2pt} \pi^{\mathrm{pr}} \hspace{2pt} \F.
	\end{align*}
	One checks readily that it  acts on $L^{2}(\R^{2n+1})$ via
	\begin{align*}
		\Bigl( \widecheck{\pi}^{\mathrm{pr}}(\mathcal{Q}, \mathcal{P}) \psi \Bigr)(X) = e^{2 \pi i \bracket{\mathcal{Q} - \mathcal{P}}{X}} \hspace{1pt} \psi \Bigl(X + \coad(\mathcal{P}) X \Bigr).
	\end{align*}
We recall that the Schwartz space $\SF{2n+1}$ coincides with the space of smooth vectors of $\widecheck{\pi}$, which we consider a reasonably good choice of test functions for our first definition.

	\begin{dfn} \label{1stDef}
Let $\pt, \qt \in [1, \infty]$ and let $\psi \in \SF{2n+1}$. We define the space $\E{\pt, \qt}{}{n}$ by
	\begin{align*}
		\E{\pt, \qt}{}{n} := \bigl \{ f \in \SD{2n+1} \mid \Norm{\E{\pt, \qt}{}{n}}{f} < \infty \bigr\}
	\end{align*}
with
$$
\Norm{\E{\pt, \qt}{}{n}}{f} :=
\Norm{\LW{\pt, \qt}{\HG{n}{2}/Z(\HG{n}{2})}{\vt}}{\bracket{f}{\widecheck{\pi}^{\mathrm{pr}} \psi}}.
$$
We refer to the spaces $\E{\pt, \qt}{}{n}$ as \emph{Heisenberg-modulation spaces or $\H$-modulation spaces}.
	\end{dfn}

The name of these spaces is motivated by the Heisenberg-type of frequency shifts $\pi^{\mathrm{pr}}(0, \mathcal{P}), \mathcal{P} \in \H,$ we indirectly make use of.
The  letter `E' for the notation is generic and stands for `espace' or `espacio'.

More explicitly, the $\H$-modulation space norm is given by:
	\begin{align*}
		\Norm{\E{\pt, \qt}{}{n}}{f} :=& 
		\left( \int_{\H} \left( \int_{\R^{2n+1}} \Abs{\subbracket{\SDG{\H}}{f}{\widecheck{\pi}^{\mathrm{pr}}(\mathcal{Q}, \mathcal{P}) \psi}}^\pt \,d\mathcal{Q} \right)^{\qt/\pt} d\mathcal{P} \right)^{1/\qt} \\
		=& \left( \int_{\H} \left( \int_{\R^{2n+1}} \Abs{\int_{\R^{2n+1}} f(X) \hspace{1pt} e^{-2 \pi i \bracket{ \mathcal{Q} - \mathcal{P}}{X}} \hspace{1pt} \overline{\psi} \Bigl(X + \coad(\mathcal{P}) X \Bigr) \,dX}^\pt \,d\mathcal{Q} \right)^{\qt/\pt} d\mathcal{P} \right)^{1/\qt}
	\end{align*}
if $\pt, \qt < \infty$ and the usual modifications otherwise. 

\medskip

In the last two sections of this paper we develop a decomposition space-theoretic picture and a coorbit-theoretic picture of the spaces $\E{\pt, \qt}{}{n}$. Let us mention two aspects of the following results.
Firstly, this will imply (see Section~\ref{CS_DF})  that, as in $\R^{n}$ (see Remark \ref{rem_dep_window_rholambda}), the specific choice of $\pi_\l = \pi_1 = \pi$  for $\l = 1$  is unnecessary, and that distinct non-vanishing windows $\psi$ will give equivalent norms.
Secondly, by building upon recent deep results in decomposition space theory by Voigtlaender~\cite{vo16-1}, our descriptions will lead to a conclusive comparison of the spaces $\E{\pt, \qt}{}{n}$ with modulation spaces and  Besov spaces on $\R^{2n+1}$ in the final subsection.

\section{The Decomposition Spaces $\EE{\pt, \qt}{\st}{n}$} \label{DS_E}

In this section we introduce a new class of decomposition spaces. 
 For the sake of brevity, we abstain from recalling the essentials of decomposition space theory and instead refer to Feichtinger and Gr\"{o}bner's foundational paper \cite{fegr85} and Voigtlaender's recent opus magnum \cite{vo16-1}. As their notations and conventions differ very little, we will adhere to the more recent account \cite{vo16-1}.

\medskip


In analogy to the frequently used weights
	\begin{align}
		\vt_\st(p) := (1 + \Abs{p}^2)^{\st/2}, \hspace{5pt} \st \in \R, \label{v_s}
	\end{align}
in modulation space theory, we define weights on $\H$ whose polynomial growth is controlled by powers of the homogeneous Cygan-Koranyi quasi-norm
	\begin{align*}
		| \hspace{7pt} |_{\H}: \H &\to [0, \infty), \\
		(p, q, t) &\mapsto \Bigl( (\Abs{p}^2 + \Abs{q}^2)^2 + 16 \hspace{1pt} t^2 \Bigr)^{1/4}.
	\end{align*}
Like every quasi-norm it is continuous, homogeneous of degree $1$ with respect to the natural dilations on $\H$, symmetric with respect to $X \mapsto X^{-1}$ and definite, i.e. $\Abs{X} = 0$ if and only if $\Abs{X} = 0$. The fact that this quasi-norm is sub-additive, thus in fact a norm, was first proved in Cygan~\cite{Cyg83}. For more details on quasi-norms we refer to the monograph Fischer and Ruzhansky~\cite[\SS~3.1.6]{FiRuz}. 

	\begin{dfn} \label{PolyHeisWeights}
For $\st \in \R$ we define the weight $\hvt_\st$ by
	\begin{align*}
		&\hvt_\st: \H \to (0, \infty), \\
		&\hvt_\st(\mathcal{P}) := (1 + \Abs{\mathcal{P}}^4_{\H})^{\st/4}.
	\end{align*}
	\end{dfn}
Since any two quasi-norms on $\H$ are equivalent, different choices of quasi-norm in Definition~\ref{PolyHeisWeights} yield equivalent weights $\hvt_\st$.

To arrive at an admissible covering of $\Orbit = \widehat{\R}^{2n+1}$ and a convenient $\L{1}{\R^{2n+1}}$-bounded admissible partition of unity (BAPU), we make use of a discrete subgroup $\Gamma \subgr \H$. 

We define 
\begin{equation}
\Gamma := \exp_{\H}\Bigl( \{ (a, b, c) \in \widehat{\R}^{2n+1} \mid a, b \in (2 \Z)^n, c \in 2 \Z \} \Bigr).
\label{Gamma}
\end{equation}
It is well known (see e.g. \cite[SS 5.4]{CorwinGreenleaf})
that $\Gamma$ is a lattice subgroup of $\H$
in the sense that it is discrete and cocompact in $\H$ and that 
 $\{ (a, b, c) \in \Z^{2n+1} \mid a, b \in (2 \Z)^n, c \in 2 \Z \}$ is an additive subgroup of $\h$.
Furthermore, its fundamental domain is  
$$
\Sigma := \exp_{\H}\Bigl( [0, 2)^{2n+1} \Bigr).
$$

Let us construct an admissible covering with subordinate BAPU around this lattice:

	\begin{prop} \label{PropDSDF}
Let $\Gamma$ be the lattice as above in \eqref{Gamma}.
For a chosen $\e \in (0, \frac{1}{2})$,
we fix   a non-negative smooth function
$\tilde{\vartheta}$ 
supported inside $P := (-\e, 2 + \e)^{2n+1} \subseteq \widehat{\R}^{2n+1}$ which equals $1$ on $(-\frac{\e}{2}, 2 + \frac{\e}{2})^{2n+1}$ and set 
$$
\tilde{\vartheta}_\gamma(\Xi) := \tilde{\vartheta}(\Xi \HP \gamma^{-1})
\qquad\mbox{as well as}\qquad 
\vartheta_\gamma := \tilde{\vartheta}_\gamma/(\sum_{\gamma' \in \Gamma} \tilde{\vartheta}_{\gamma'}).
$$ 
Finally, for $\st \in \R$ we define $\hut_\st := \{ \hut_\st(\gamma) \}_{\gamma \in \Gamma}$ by $\hut_\st(\gamma) := \hvt_\st(\gamma)$.
Then
	\begin{itemize}
		\item[(i)] $\mathscr{P} := \{ P \HP \gamma\}_{\gamma \in \Z^{2n+1}}$ is a structured covering of $\Orbit := \widehat{\R}^{2n+1}$.
		\item[(ii)] $\Theta := \{ \vartheta_\gamma \}_{\gamma \in \Gamma}$ is an $L^1$-BAPU subordinate to $\mathscr{P}$ with $\supp (\vartheta_\gamma) \subset P \HP \gamma$ for all $\gamma \in \Gamma$.
		\item[(iii)] The weight $\hut_\st$ is $\mathscr{P}$-moderate and we have 
		 for all $\gamma \in \Gamma$ and $\Xi \in P_\gamma$:
$$
\hut_\st(\gamma) \asymp \hvt_\st(\Xi) \asymp \int_{P_\gamma} \hvt_\st(\Xi') \, d\Xi'.
$$
	\end{itemize}
	\end{prop}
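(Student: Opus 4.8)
The whole argument rests on one structural observation, which I would establish first: in the exponential coordinates of $\widehat{\R}^{2n+1}$, right translation $R_\gamma\colon\Xi\mapsto\Xi\HP\gamma$ by a fixed $\gamma=(a,b,c)$ is, by \eqref{Heisenberg_Group_Law_Coordinates}, an \emph{affine} bijection: its linear part $T_\gamma$ is the lower-triangular unipotent matrix with bottom row $(\tfrac12 b^{\mathrm T},-\tfrac12 a^{\mathrm T},1)$, so $\det T_\gamma=1$ and $T_\gamma$ depends only on $(a,b)$, while its translation part is $b_\gamma=\gamma$. Hence $P_\gamma:=P\HP\gamma=T_\gamma P+b_\gamma$ has $|P_\gamma|=|P|$ for every $\gamma$, the linear part of $R_\gamma^{-1}\circ R_{\gamma'}=R_{\gamma'\HP\gamma^{-1}}$ equals $T_\gamma^{-1}T_{\gamma'}=T_{\gamma'\HP\gamma^{-1}}$ (bottom row $(\tfrac12(b'-b)^{\mathrm T},-\tfrac12(a'-a)^{\mathrm T},1)$), and in these coordinates $\Gamma$ is the genuine lattice $2\Z^{2n+1}$ (identified with the index set $\Z^{2n+1}$ of the statement), with $\Sigma=[0,2)^{2n+1}\subseteq P$ as fundamental domain. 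With this in hand part (i) is quick: $\mathscr{P}$ covers $\Orbit=\widehat{\R}^{2n+1}$ because each point lies in some $\Sigma\HP\gamma\subseteq P_\gamma$ (slightly shrinking $P$ while keeping $\Sigma$ inside supplies the tight inner covering required by the definition); admissibility holds because $P_\gamma\cap P_{\gamma'}\neq\emptyset$ forces $\gamma'\HP\gamma^{-1}$ into the fixed relatively compact set $\overline{P}^{-1}\HP\overline{P}$, which meets the discrete group $\Gamma$ in finitely many points, uniformly in $\gamma$; and the ``structured'' condition follows because, for overlapping cells, reading off the first $2n$ coordinates of an identity $\xi\HP\gamma=\xi'\HP\gamma'$ with $\xi,\xi'\in P$ bounds $\Abs{a-a'}$ and $\Abs{b-b'}$, hence bounds $\|T_\gamma^{-1}T_{\gamma'}\|$ and, symmetrically, $\|T_{\gamma'}^{-1}T_\gamma\|$.

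For part (ii), the point I would make is that the normalising denominator $S:=\sum_{\gamma'\in\Gamma}\tilde\vartheta_{\gamma'}$ is locally finite (by the admissibility just established), smooth, bounded below by $1$ on $\widehat{\R}^{2n+1}$ (each point sits in some $\Sigma\HP\gamma$, where $\tilde\vartheta_\gamma\equiv 1$), and \emph{right-$\Gamma$-invariant} (reindex the sum). Therefore $\vartheta_\gamma=\vartheta_0\circ R_\gamma^{-1}$ for the single function $\vartheta_0:=\tilde\vartheta/S\in C_c^\infty(P)$ with $0\le\vartheta_0\le 1$. From this, $\supp\vartheta_\gamma=(\supp\vartheta_0)\HP\gamma\subseteq P_\gamma$ and $\sum_{\gamma\in\Gamma}\vartheta_\gamma=S/S\equiv 1$ are immediate; and since the $(T_\gamma,b_\gamma)$-normalisation of $\vartheta_\gamma$ is $\vartheta_\gamma\circ R_\gamma=\vartheta_0$ --- literally the same function for all $\gamma$ --- and $\Abs{\det T_\gamma}=1$, the relevant BAPU constant collapses to $\Norm{1}{\F^{-1}\vartheta_0}<\infty$. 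Thus $\Theta$ is an $L^1$-BAPU (indeed an $L^q$-BAPU for every $q\in(0,1]$) subordinate to $\mathscr{P}$, with the stated support property.

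For part (iii), I would write $\Xi=\xi\HP\gamma$ with $\xi\in P$ and use the symmetry and subadditivity of the Cygan--Koranyi quasi-norm to get $\bigl|\Abs{\Xi}_\H-\Abs{\gamma}_\H\bigr|\le\Abs{\xi}_\H\le\sup_{\overline{P}}\Abs{\cdot}_\H<\infty$; together with the elementary comparisons $(1+r^4)^{1/4}\asymp 1+r$ and $1+r\asymp 1+r'$ when $\Abs{r-r'}$ is bounded, this gives $\hvt_\st(\Xi)\asymp\hvt_\st(\gamma)=\hut_\st(\gamma)$ with constants uniform in $\gamma$ and $\Xi\in P_\gamma$, which is the first claimed equivalence. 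The $\mathscr{P}$-moderateness of $\hut_\st$ then follows at once by choosing $\Xi$ in an overlap $P_\gamma\cap P_{\gamma'}$, and the second equivalence follows by applying the same estimate to two points $\Xi,\Xi'\in P_\gamma$ (noting $\xi\HP(\xi')^{-1}\in P\HP P^{-1}$, relatively compact) to see that $\hvt_\st$ varies by at most a fixed factor on each $P_\gamma$, then integrating and using $|P_\gamma|=|P|$.

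The only step I expect to demand real care is the transition-map bound in part (i): the shear entries of $R_\gamma$ and $R_{\gamma'}$ individually grow without bound as $\gamma\to\infty$, yet they cancel in $R_\gamma^{-1}\circ R_{\gamma'}$ so that only the bounded differences of the horizontal coordinates survive in the linear part $T_\gamma^{-1}T_{\gamma'}$; pinning this cancellation down directly from \eqref{Heisenberg_Group_Law_Coordinates} is the crux. Everything else reduces to bookkeeping with that group law, the unimodularity of the $T_\gamma$, the fundamental-domain property of $\Sigma$, and the standard behaviour of the Fourier algebra $\F^{-1}L^1$ and of the Cygan--Koranyi norm under these affine maps.
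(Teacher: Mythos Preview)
Your proof is correct and follows essentially the same route as the paper. You isolate the same key structural fact that the paper records as a separate lemma (Lemma~\ref{lem_Hmult}): right $\H$-translation is affine with unipotent linear part $T_\gamma$, $\det T_\gamma=1$, and $T_\gamma^{-1}T_{\gamma'}=T_{\gamma'-\gamma}$; from this, all three parts unfold just as in the paper --- admissibility via a compactness argument on $\gamma'\HP\gamma^{-1}$, the structured-covering bound via the boundedness of the horizontal differences $(a'-a,b'-b)$, the uniform $L^1$-BAPU bound via $\Abs{\det T_\gamma}=1$, and the weight moderateness via the triangle inequality for the Cygan--Koranyi norm on the compact set $\overline P$. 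Your treatment of part~(ii) is slightly more explicit than the paper's (you spell out the right-$\Gamma$-invariance of the normaliser $S$, whence $\vartheta_\gamma=\vartheta_0\circ R_\gamma^{-1}$), but the substance is identical.
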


In the proof of Proposition \ref{PropDSDF}
and other arguments, 
we will use the following observation.
\begin{lem}
\label{lem_Hmult}
The $\H$-group multiplication can be expressed by affine linear maps:
	\begin{align}
		\Xi \HP \gamma
	=
		\begin{pmatrix} 
				\xi + a \\
				\omega + b \\
				\tau + c +\frac{1}{2}(\xi b - \omega a)
		\end{pmatrix}
	=
		\begin{pmatrix} 
				1 & 0 & 0 \\
				0 & 1 & 0 \\
				\frac{1}{2} b & -\frac{1}{2} a & 1
		\end{pmatrix}
		\begin{pmatrix} 
				\xi \\
				\omega \\
				\tau
		\end{pmatrix}
	+
		\begin{pmatrix} 
				a \\
				b \\
				c
		\end{pmatrix}
	=: T_\gamma \hspace{2pt} \Xi + \gamma \label{T_gamma}
	\end{align}
		for any $\Xi = (\xi, \omega, \tau)$ and $\gamma = (a, b, c)$ in $\H$. 
		The $(2n+1)\times (2n+1)$ matrix $T_{\gamma}$ has determinant 1.
		We also have
 $$
 T_\gamma^{-1} = T_{\gamma^{-1}} = T_{-\gamma}, 
 \quad\mbox{and}\quad 
T_{\gamma}T_{\gamma'}=T_{\gamma+\gamma'}=T_{\gamma \HP\gamma'}.
$$ 
\end{lem}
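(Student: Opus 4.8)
The plan is to verify all the claims by a direct computation starting from the explicit Heisenberg group law \eqref{Heisenberg_Group_Law_Coordinates}. First I would write, for $\Xi = (\xi, \omega, \tau)$ and $\gamma = (a, b, c)$ in $\H \cong \R^{2n+1}$,
$$\Xi \HP \gamma = \bigl( \xi + a,\ \omega + b,\ \tau + c + \tfrac12 (\xi b - \omega a) \bigr),$$
and observe that for fixed $\gamma$ the right-hand side is an affine function of $\Xi$: the first two blocks are translations, while $\tfrac12(\xi b - \omega a)$ is linear in $(\xi, \omega)$ because $a, b$ are held fixed. Reading off the coefficients then gives $\Xi \HP \gamma = T_\gamma \Xi + \gamma$, where in the block $(2n+1)\times(2n+1)$ matrix $T_\gamma$ the symbols $1$ and $0$ in the first two rows stand for the $n \times n$ identity and zero blocks, and $\tfrac12 b$, $-\tfrac12 a$ in the last row stand for the $1 \times n$ row vectors implementing $\xi \mapsto \tfrac12 \xi b$ and $\omega \mapsto -\tfrac12 \omega a$ via the Euclidean inner product. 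This is precisely the matrix $T_\gamma$ in \eqref{T_gamma}.

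The determinant claim is immediate, since as an ordinary $(2n+1) \times (2n+1)$ matrix $T_\gamma$ is lower triangular with every diagonal entry equal to $1$.

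For the composition and inversion identities, the key observation is that $T_\gamma$ depends only on the first $2n$ coordinates $(a,b)$ of $\gamma$, and that both $\gamma \HP \gamma'$ and $\gamma + \gamma'$ have $(a + a', b + b')$ as their first $2n$ coordinates; hence $T_{\gamma \HP \gamma'} = T_{\gamma + \gamma'}$ automatically. It then remains to show $T_\gamma T_{\gamma'} = T_{\gamma + \gamma'}$, which I would obtain either by multiplying the two block matrices directly — only the last block row requires care, where $(\tfrac12 b, -\tfrac12 a, 1)$ acting on $T_{\gamma'}$ produces $(\tfrac12(b + b'), -\tfrac12(a + a'), 1)$ — or, more structurally, by comparing the affine maps on the two sides of the associativity relation $(\Xi \HP \gamma) \HP \gamma' = \Xi \HP (\gamma \HP \gamma')$, which forces $T_{\gamma'} T_\gamma = T_{\gamma \HP \gamma'} = T_{\gamma + \gamma'}$; commutativity $T_\gamma T_{\gamma'} = T_{\gamma + \gamma'} = T_{\gamma' + \gamma} = T_{\gamma'} T_\gamma$ is then clear. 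Finally, $\gamma^{-1} = -\gamma$ in exponential coordinates (again immediate from \eqref{Heisenberg_Group_Law_Coordinates}), so $T_\gamma T_{-\gamma} = T_{\gamma + (-\gamma)} = T_0 = \I$ yields $T_\gamma^{-1} = T_{-\gamma} = T_{\gamma^{-1}}$.

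There is no genuine obstacle in this lemma; it is a purely computational bookkeeping statement, and the only mild subtlety is keeping the block structure and the placement of the Euclidean dot products straight throughout.
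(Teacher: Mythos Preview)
Your proposal is correct and is exactly the kind of direct verification the paper has in mind: the paper itself gives no proof beyond ``follows from easy computations left to the reader,'' so you have simply written out those computations. There is nothing to compare.
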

Lemma \ref{lem_Hmult} follows from easy computations left to the reader.

	\begin{proof}[Proof of Proposition \ref{PropDSDF} Part (i)]
 Since $\Gamma$ is a lattice subgroup with 
$$
\overline{\Sigma \HP \gamma} \subset P_\gamma := P \HP \gamma,
$$
 the family of non-empty, relatively compact, connected, open sets $\mathscr{P}$ constitutes a covering of $\Orbit$. 
  If $P_\gamma \cap P_{\gamma'} \neq \emptyset$, then there exist $\Xi, \Xi' \in P$ such that $\Xi \HP \gamma = \Xi' \HP \gamma'$.
The equality $\Abs{\Xi'^{-1} \HP \Xi}_{\H} = \Abs{\gamma' \HP \gamma^{-1}}_{\H}$ yields the estimate
	\begin{align*}
		\Abs{\gamma' \HP \gamma^{-1}}_{\H} \leq \max_{\Xi, \Xi' \in P} \Abs{\Xi'^{-1} \HP \Xi}_{\H} \leq 2 \max_{\Xi \in P} \Abs{\Xi}_{\H} \leq 2 \hspace{1pt} \bigl( 4 \hspace{1pt} (2 + \e)^4 + 16 \hspace{1pt}  (2 + \e)^2 \bigr)^{1/4} \leq 9.
	\end{align*}
	So for a fixed $\gamma$ there are only finitely many $\gamma' \in \Gamma$ with $\Abs{\gamma' \HP \gamma^{-1}}_{\H} \leq 9$ and the argument is independent of $\gamma$.
	Therefore, the following supremum is finite:
	 $$
 N_\mathscr{P} := \sup_{\gamma \in \Gamma} \Abs{\gamma^*} < \infty
 \qquad\mbox{where}\qquad 
 \gamma^* := \{ \gamma' \in \Gamma \mid P_\gamma \cap P_{\gamma'} \neq \emptyset \},
 $$
and the covering is admissible.

The same argument gives an admissible covering for the open set $(-\frac{\e}{2}, 2 + \frac{\e}{2})^{2n+1} \subseteq P$. 

Lemma \ref{lem_Hmult} easily implies that
$\sup_{\gamma \in \Gamma} \sup_{\gamma' \in \gamma^*} \Norm{}{T_\gamma^{-1} T_{\gamma'}} < \infty$, 
and this shows that $\mathscr{P}$ is a structured covering of $\Orbit = \widehat{\R}^{2n+1}$.
\end{proof}

	\begin{proof}[Proof of Proposition \ref{PropDSDF} Part (ii)]
 Since the conditions~$(i)$ - $(iii)$ from \cite[Def.~3.5]{vo16-1} follow from the construction of $\Theta$, we only have to show the condition~$(vi)$, that is, the uniform bound on the Fourier-Lebesgue norm $\Norm{\L{1}{\R^{2n+1}}}{\F^{-1} \vartheta_\gamma}$. 
 The latter follows from $
\Norm{\L{1}{\R^{2n+1}}}{\F^{-1} \vartheta_\gamma} = \Norm{\L{1}{\R^{2n+1}}}{\F^{-1} \vartheta_0}$, which holds since the group multiplication is given by an affine 
transformation with determinant 1, see  Lemma \ref{lem_Hmult}.
\end{proof}

	\begin{proof}[Proof of Proposition \ref{PropDSDF} Part (iii)]
	Part (iii) follows easily from 
 the triangle inequality for the Cygan-Koranyi norm and the compactness of $P$.
	\end{proof}

Our new class of decomposition spaces is defined by the above ingredients.

	\begin{dfn} \label{DefEspaces}
Let $\mathscr{P}$ be the admissible covering of $\widehat{\R}^{2n+1}$ with BAPU $\Theta$ defined by Proposition~\ref{PropDSDF}. Then for $\pt, \qt \in [1, \infty]$ and $\st \in \R$ we denote the decomposition space $\DS{P}{}{\pt}{\qt}{\hut_\st}$ by $\EE{\pt, \qt}{\st}{n}$. For $\EE{\pt, \pt}{\st}{n}$ we may also write $\EE{\pt}{\st}{n}$.
	\end{dfn}

\begin{figure}[hp]
	\begin{center}
		\includegraphics[width=.35\textwidth]{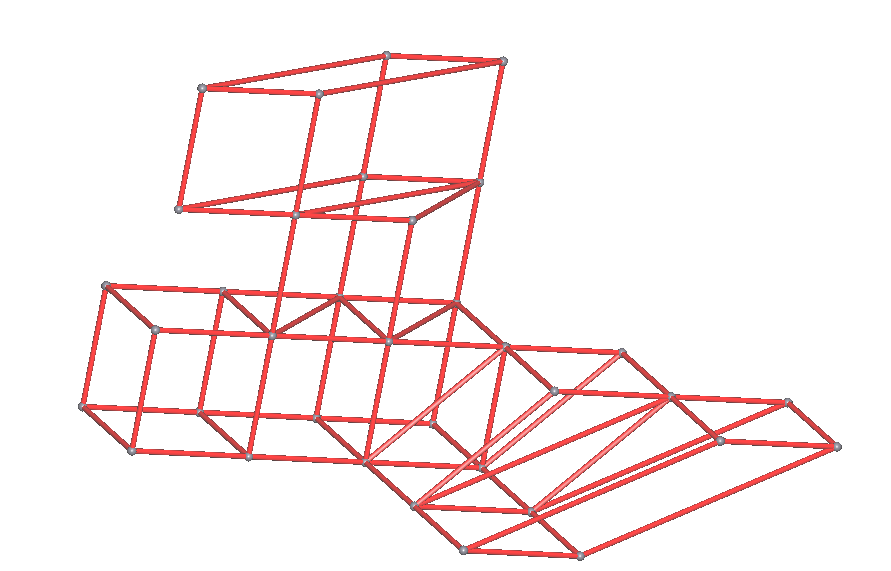}
		\caption{Detail of the Lattice Subgroup $\Gamma$ of $\mathbf{H}_1 \cong \R^3$.}
	\end{center}
\end{figure}

\section{The Coorbit Spaces $\E{\pt, \qt}{\st}{n}$} \label{CS_E}

In this final section we study the coorbit spaces derived from the projective representations of the Dynin-Folland group $\HG{n}{2}$. 
In the first subsection we characterise the coorbits $\E{\pt, \qt}{\st}{n}$ derived from the generic representation and observe that the $\H$-modulation spaces coincide with the unweighted coorbits $\E{\pt, \qt}{0}{n}$. In the second subsection we give a complete classification and characterisation of the coorbits related to $\HG{n}{2}$. In the third subsection we provide a conclusive comparison of the coorbits $\E{\pt, \qt}{\st}{n}$ with the classical modulation spaces and Besov spaces on $\R^{2n+1}$. The comparison is based on Voigtlaender's decomposition space paper \cite{vo16-1}.

Proceeding as in Section~\ref{DS_E}, we abstain from recalling the basics of coorbit theory and instead refer to Feichtinger and Gr\"{o}chenig's foundational paper~\cite{fegr89} as well as Gr\"{o}chenig's paper~\cite{Gro91}.

\subsection{The Generic Coorbits Related to $\HG{n}{2}$} \label{GenCoorbits}

In this subsection we define the generic coorbit spaces under the projective representation $\widecheck{\pi}^{\mathrm{pr}}$ of the Dynin-Folland group $\HG{n}{2}$ and state a decomposition space-theoretic characterisation.

	\begin{lem} \label{LemHWeights}
For the weights $\hvt_\st, \st \in \R$, on $\H$ from Definition~\ref{PolyHeisWeights} we have the following:
	\begin{itemize}
		\item[(i)] If $\st \in [0, \infty)$, then $\hvt_\st$ is submultplicative.
		\item[(ii)] If $\st \in (-\infty, 0)$, then $\hvt_\st$ is $\hvt_{-\st}$-moderate.
		\item[(iii)] The extensions of $\hvt_\st$ to $\HG{n}{2}$ and $\HG{n}{2}/Z(\HG{n}{2})$, which we also denote by $\hvt_\st$, satisfy (i) and (ii) for the respective (quotient) group structures.
	\end{itemize}
Let $\hvt_{\st_1}$ be $\hvt_{\st_2}$-moderate for $\st_1, \st_2 \in \R$. Then for every $\pt, \qt \in \R$ the space $\LW{\pt, \qt}{\HG{n}{2}/Z(\HG{n}{2})}{\hvt_{\st_1}}$ is an
 $\LW{1}{\HG{n}{2}/Z(\HG{n}{2})}{\hvt_{\st_2}}$-Banach convolution module.
	\end{lem}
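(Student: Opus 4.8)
The plan is to deduce (i)--(iii) from the elementary geometry of the Cygan--Koranyi quasi-norm and then to obtain the module property from the standard weighted convolution machinery, the sole purpose of (i)--(iii) being to supply the moderate weights that this machinery needs. For (i) and (ii) I would first record the pointwise comparison $\hvt_\st(\mathcal{P}) \asymp \max(1,|\mathcal{P}|_{\H})^{\st}$, valid for every $\st \in \R$ directly from the definition of $\hvt_\st$, so that it suffices to argue with $\hvt_\st$ itself while tracking constants. The only extra input is the sub-additivity $|XY|_{\H} \le |X|_{\H} + |Y|_{\H}$ --- precisely Cygan's theorem recalled before Definition~\ref{PolyHeisWeights} --- together with the trivial estimate $(a+b)^4 \le 8(a^4+b^4)$; combining these gives $1 + |XY|_{\H}^4 \le 8\,(1+|X|_{\H}^4)(1+|Y|_{\H}^4)$ for all $X,Y \in \H$. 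Raising this to the power $\st/4 \ge 0$ yields submultiplicativity of $\hvt_\st$ up to the constant $8^{\st/4}$, which is (i). For (ii) I would apply the same inequality to the decomposition $X = (XY)(Y^{-1})$ and use the symmetry $|Y^{-1}|_{\H} = |Y|_{\H}$ to get $1 + |X|_{\H}^4 \le 8\,(1+|XY|_{\H}^4)(1+|Y|_{\H}^4)$; since now $\st < 0$, raising to $\st/4$ reverses the inequality and gives $\hvt_\st(XY) \lesssim \hvt_\st(X)\,\hvt_{-\st}(Y)$, while the mirror bound $\hvt_\st(XY) \lesssim \hvt_{-\st}(X)\,\hvt_\st(Y)$ follows by interchanging the roles of $X$ and $Y$. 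Thus $\hvt_\st$ is $\hvt_{-\st}$-moderate.

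For (iii) the key observation is that $\HG{n}{2} = \R^{2n+2} \rtimes \H$ (Lemma~\ref{LemPrRepDF}, Proposition~\ref{SemiDP}) with $Z(\HG{n}{2}) = \exp(\R X_s)$ contained in the normal factor, so that the projection onto the $\H$-coordinate, $\bigl((\mathcal{Q},\mathcal{S}),\mathcal{P}\bigr)\mapsto \mathcal{P}$, is a group homomorphism which descends to a homomorphism $\HG{n}{2}/Z(\HG{n}{2}) = \R^{2n+1}\rtimes\H \to \H$. The extensions of $\hvt_\st$ are by definition the pull-backs of $\hvt_\st$ along these homomorphisms, and since pulling a submultiplicative (respectively moderate) weight back along a group homomorphism visibly preserves submultiplicativity (respectively moderateness), (i) and (ii) carry over verbatim to both $\HG{n}{2}$ and its central quotient.

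For the module property, put $G := \HG{n}{2}/Z(\HG{n}{2})$. Using that $\hvt_{\st_1}$ is $\hvt_{\st_2}$-moderate one has the pointwise estimate $|\phi*\psi|\,\hvt_{\st_1} \le C\,\bigl((|\phi|\,\hvt_{\st_2})*(|\psi|\,\hvt_{\st_1})\bigr)$ on $G$, which reduces both inclusions to the unweighted Young-type inequality $\Norm{\L{\pt,\qt}{G}}{\Phi*\Psi} \le \Norm{\L{1}{G}}{\Phi}\,\Norm{\L{\pt,\qt}{G}}{\Psi}$ and its right-sided analogue; the latter I would prove by regarding $\Phi*\Psi = \int_G \Phi(g')\,(L_{g'}\Psi)\,d\mu_G(g')$ as a Bochner integral in the solid Banach space $\L{\pt,\qt}{G}$ and applying the vector-valued Minkowski inequality, which leaves one to check that left (and, for the right module, right) translations act as isometries of the unweighted mixed-norm space. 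Completeness of $\LW{\pt,\qt}{G}{\hvt_{\st_1}}$ is classical, and the fact that $\hvt_{\st_2}$ is itself submultiplicative up to a constant --- needed to make $\LW{1}{G}{\hvt_{\st_2}}$ a Banach algebra --- follows from (i), once one notes that $\hvt_{\st_1}$ being $\hvt_{\st_2}$-moderate forces $\st_1 \le \st_2$ and $\st_1 + \st_2 \ge 0$, hence $\st_2 \ge |\st_1| \ge 0$.

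The step I expect to be the genuine obstacle is the last isometry claim for the \emph{mixed}-norm space on a semi-direct product: for a general $N \rtimes H$ the operators $L_{g'}$ and $R_{g'}$ distort $\Norm{\L{\pt,\qt}{N\rtimes H}}{\,\cdot\,}$ by a power of the homomorphism $\delta$ of Definition~\ref{MixLpq}. What rescues us is that $G$ is nilpotent, hence unimodular, with every $\Ad$-map unipotent and of determinant $1$; therefore $\delta \equiv 1$, the Haar measure is the bi-invariant product measure $d\mathcal{Q}\,d\mathcal{P}$ consistent with Lemma~\ref{LemPrRepDF}, and a direct change of variables in the $\mathcal{Q}$- and $\mathcal{P}$-integrals gives $\Norm{\L{\pt,\qt}{G}}{L_{g'}\Psi} = \Norm{\L{\pt,\qt}{G}}{\Psi} = \Norm{\L{\pt,\qt}{G}}{R_{g'}\Psi}$ for all $g' \in G$. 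Everything else is bookkeeping with the Cygan--Koranyi triangle inequality and a routine application of the weighted Young inequality.
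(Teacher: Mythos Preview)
Your proposal is correct and follows the same approach as the paper. The paper's own proof is a one-liner: it observes that $\delta \equiv 1$ for the $\HG{n}{2}$-Haar measure in split-exponential coordinates and then simply declares the arguments identical to those of Lemmas~11.1 and 11.2 in Gr\"{o}chenig's book~\cite{gr01}; you have essentially unpacked exactly those Euclidean arguments in the Heisenberg setting, correctly identifying the Cygan--Koranyi sub-additivity as the replacement for the Euclidean triangle inequality in (i)--(ii), the homomorphism $\HG{n}{2}\to\H$ for (iii), and the unipotence of the $\H$-action on the normal factor (hence $\delta\equiv 1$ and translation-invariance of the mixed norm) for the module statement.
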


	\begin{proof}
Since for the $\HG{n}{2}$-Haar measure in split-exponential coordinates $\delta = 1$, the proofs are identical to the proofs of the standard Euclidean statements in \cite{gr01}, Lemmas~11.1 and 11.2.
	\end{proof}

	\begin{dfn} [The Generic Coorbits] \label{GenCo}
Let $\st \in \R$, $\pt, \qt \in [1, \infty]$ and let $\widecheck{\pi}^{\mathrm{pr}}$ be the projective representation of the Dynin-Folland group $\HG{n}{2}$ from Subsection~\ref{Subsection_FirstDef}. We define the generic coorbit space related to $\HG{n}{2}$ by
	\begin{align*}
		\E{\pt, \qt}{\st}{n} := \Co_{\widecheck{\pi}^{\mathrm{pr}}} \Bigl( \LW{\pt, \qt}{\HG{n}{2}/Z(\HG{n}{2})}{\hvt_\st} \Bigr) := \Co_{\widecheck{\pi}^{\mathrm{pr}}} \Bigl( \LW{\pt, \qt}{\R^d \rtimes \H/\exp(\R X_s)}{\hvt_\st} \Bigr).
	\end{align*}
	\end{dfn}

We can announce our first main theorem.

	\begin{thm} \label{MainThm1}
	Let $\pt, \qt \in [1, \infty]$. 
The case $\st=0$ of the coorbit spaces $\E{\pt, \qt}{\st}{n}$ from Definition~\ref{GenCo} coincides with the space $\E{\pt, \qt}{}{n}$ from Definition~\ref{1stDef}:
$$
\E{\pt, \qt}{0}{n} = \E{\pt, \qt}{}{n}.
$$ 
For any  $\st \in \R$,  
the coorbit space $\E{\pt, \qt}{\st}{n}$
coincides with the 
 decomposition space  $\EE{\pt, \qt}{\st}{n}$  from Definition~\ref{DefEspaces}:
 $$
\E{\pt, \qt}{\st}{n} = \EE{\pt, \qt}{\st}{n}.
$$
Moreover, Definition~\ref{GenCo} and, a fortiori, Definition~\ref{1stDef}, do not depend on the specific choices of representation $\widecheck{\pi}^{\mathrm{pr}}_\l, \l \in \R \setminus \{ 0 \}$, window $\psi \in \SF{2n+1} \setminus \{ 0 \}$, and quasi-norm $\Abs{\, . \,}_{\H}$.
	\end{thm}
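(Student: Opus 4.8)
The plan is to establish the three claimed identities in sequence, with the bulk of the work residing in the coorbit/decomposition-space equivalence $\E{\pt,\qt}{\st}{n} = \EE{\pt,\qt}{\st}{n}$. First, the identity $\E{\pt,\qt}{0}{n} = \E{\pt,\qt}{}{n}$ is essentially a matter of unwinding definitions: Definition~\ref{1stDef} computes $\Norm{\LW{\pt,\qt}{\HG{n}{2}/Z(\HG{n}{2})}{\vt}}{\bracket{f}{\widecheck{\pi}^{\mathrm{pr}}\psi}}$ with trivial weight, and Definition~\ref{GenCo} does the same with $\hvt_0 \equiv 1$, so one only needs to check that the reservoir $\SF{2n+1}$ used for the first definition is the canonical reservoir of smooth vectors attached to $\widecheck{\pi}^{\mathrm{pr}}$ so that the abstract coorbit construction of Feichtinger--Gr\"ochenig~\cite{fegr89} applies and returns the naive integral expression. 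This reduces to verifying the standard coorbit axioms for $\widecheck{\pi}^{\mathrm{pr}}$: square-integrability modulo the centre (which follows from Theorem~\ref{ProjKer}(i), the projective kernel being $Z(\HG{n}{2})$) and the existence of a good window --- any nonzero $\psi\in\SF{2n+1}$ works since $\SF{2n+1}$ is the G\r{a}rding/smooth-vector space. I would also invoke Lemma~\ref{LemHWeights} to guarantee that $\LW{\pt,\qt}{\HG{n}{2}/Z(\HG{n}{2})}{\hvt_\st}$ is a solid two-sided Banach convolution module over $\LW{1}{\HG{n}{2}/Z(\HG{n}{2})}{\hvt_{|\st|}}$, which is exactly the hypothesis needed for coorbit theory to produce a well-defined Banach space independent of the window.

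For the central claim $\E{\pt,\qt}{\st}{n} = \EE{\pt,\qt}{\st}{n}$, the strategy is to exploit the semi-direct product structure $\HG{n}{2} = \R^{2n+2}\rtimes\H$ (Lemma~\ref{LemPrRepDF}) and transport the coorbit norm to the Fourier side via $\widecheck{\pi}^{\mathrm{pr}} = \F^{-1}\pi^{\mathrm{pr}}\F$. Concretely, I would compute the matrix coefficient $\subbracket{\SDG{\H}}{f}{\widecheck{\pi}^{\mathrm{pr}}(\cQ,\cP)\psi}$ and show, using the explicit action $(\widecheck{\pi}^{\mathrm{pr}}(\cQ,\cP)\psi)(X) = e^{2\pi i\bracket{\cQ-\cP}{X}}\psi(X + \coad(\cP)X)$, that the inner ($\cQ$-)integral over $\R^{2n+1}$ is (up to a $\cP$-dependent unimodular change of variables, governed by $T_\cP$ of Lemma~\ref{lem_Hmult}) an $\L{\pt}{\R^{2n+1}}$-norm of a band-limited piece $\F^{-1}(\varphi_\cP\,\widehat f)$ where $\varphi_\cP$ is essentially $\overline{\widehat\psi}$ translated by $\cP$ in frequency. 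Discretising the outer ($\cP$-)integral along the lattice $\Gamma\subgr\H$ from \eqref{Gamma} --- using the BAPU $\Theta$ of Proposition~\ref{PropDSDF}(ii), solidity of $\LW{\pt,\qt}{}{\hvt_\st}$, the finiteness of the overlap $N_{\mathscr P}$, and the weight equivalences $\hut_\st(\gamma)\asymp\hvt_\st(\Xi)$ of Proposition~\ref{PropDSDF}(iii) --- converts the continuous coorbit norm into the decomposition-space norm $\Norm{\ell^\qt_{\hut_\st}(\Gamma)}{(\Norm{\L{\pt}{\R^{2n+1}}}{\F^{-1}(\varphi_\gamma\widehat f)})_{\gamma}}$. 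This is the kind of ``coorbit = decomposition space'' passage already illustrated in the Merging Perspectives subsection for the classical case; the novelty is purely that the covering geometry is dictated by $\H$-translations rather than by a dilation group, and Lemma~\ref{lem_Hmult} is precisely what keeps all the relevant maps affine with Jacobian~$1$ so the Fourier-multiplier estimates go through uniformly. I expect this upper-and-lower sandwiching, in particular verifying that the window's frequency support is adapted to $\mathscr P$ and that one may pass between the continuous and discrete weights and integrals without loss, to be the main obstacle, and I would cite the relevant machinery from Voigtlaender~\cite{vo16-1} (and \cite{fegr89, Gro91}) wherever a general principle can be quoted rather than re-proved.

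Finally, the independence assertions come almost for free once the two descriptions are in hand. Independence of the window $\psi\in\SF{2n+1}\setminus\{0\}$ and of the representative $\l\in\R\setminus\{0\}$ is a built-in feature of coorbit theory: different admissible windows and unitarily equivalent (here: rescaled) representations produce the same coorbit space with equivalent norms, and $\widecheck{\pi}^{\mathrm{pr}}_\l$ is intertwined with $\widecheck{\pi}^{\mathrm{pr}}_1$ by a dilation on $\H$ exactly as in Remark~\ref{rem_dep_window_rholambda} --- one checks the family $\{\pi_\l\}$ restricts on the centre to distinct characters but yields the same quotient representation up to equivalence. Independence of the quasi-norm $\Abs{\,.\,}_\H$ is immediate because any two homogeneous quasi-norms on $\H$ are equivalent, so the weights $\hvt_\st$ and hence the spaces $\LW{\pt,\qt}{}{\hvt_\st}$ change only up to equivalent norms, as already noted after Definition~\ref{PolyHeisWeights}. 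I would phrase the whole last paragraph as a short corollary of the equivalence $\E{\pt,\qt}{\st}{n}=\EE{\pt,\qt}{\st}{n}$ together with the general stability theorems for coorbit and decomposition spaces, since the decomposition-space side makes the weight- and covering-independence transparent while the coorbit side makes the window- and representation-independence transparent.
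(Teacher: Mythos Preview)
Your proposal is correct and follows essentially the same route as the paper: one first checks that $\SF{2n+1}$ furnishes admissible windows (the paper does this by observing that the voice transform factors as a partial Fourier transform composed with the measure-preserving change of variables $(\cP,\Xi)\mapsto(\cP,\Xi\HP\cP)$, hence maps Schwartz to Schwartz), then proves the norm equivalence by choosing $\widehat\psi$ to be a bump adapted to the fundamental domain, discretising the $\cP$-integral along $\Gamma$, and sandwiching via Young's inequality together with the Jacobian-$1$ property of the maps $T_\gamma$ from Lemma~\ref{lem_Hmult}. One small correction: the $\l$-independence is not obtained in the paper via unitary equivalence or a dilation on $\H$ (the $\pi_\l$ are pairwise inequivalent), but rather falls out because the entire norm comparison is carried through for arbitrary fixed $\l$, with $\l$ entering only through the harmless change of variables $\cQ\mapsto -\l\cQ$ and a constant factor $|\l|^{-(2n+1)/\pt}$.
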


The proof will be a consequence of Proposition~\ref{MainThm2}~Case (1). An essential step in the proof is to show that for every $\st \in \R$ the space $\SF{2n+1}$ can be taken as the space of windows $\psi$ and its topological dual $\SDG{\H} \cong \SD{2n+1}$ is a sufficiently large reservoir of distributions for the coorbit-theoretic definition of $\E{\pt, \qt}{\st}{n}$.

\subsection{A Classification of the Coorbits Related to $\HG{n}{2}$} \label{CS_DF}

We recall that by \cite[Thm.~4.8~(i)]{fegr89} two coorbits are isometrically isomorphic if the defining representations are unitarily equivalent. This is important because Proposition~\ref{MainThm2}, which classifies the coorbit spaces related to the Dynin-Folland group according to the projective unirreps of $\HG{n}{2}$, also provides decomposition space-theoretic characterisations of the coorbits. As a consquence, these characterisations permit a conclusive comparison of the spaces with classical modulation spaces and Besov spaces via the decomposition space-theoretic machinery developed in \cite{vo16-1}.

	\begin{prop} \label{MainThm2}
Let $\pt, \qt \in [1, \infty]$ and $\st \in \R$. Let $\vt_\st$ and $\hvt_\st$ be defined as in \eqref{v_s} and Definition~\ref{PolyHeisWeights}, respectively. Let $\pi_\l^{\mathrm{pr}}$ be the projective representation of $\HG{n}{2}$ from Lemma~\ref{LemPrRepDF} and set $\widecheck{\pi}_\l^{\mathrm{pr}} := \F^{-1} \hspace{2pt} \pi_\l^{\mathrm{pr}} \hspace{2pt} \F$ for the $(2n+1)$-dimensional Euclidean Fourier transform $\F$. Moreover, let $\EE{\pt, \qt}{\st}{n}$ and $\E{\pt, \qt}{\st}{n}$ be defined as in Definition~\ref{DefEspaces} and Definition~\ref{GenCo}.
Then, up to isometric isomorphisms, we have the following classification and characterisations of coorbits of $L^{\pt, \qt}$ under the projective unitary irreducible representations of the Dynin-Folland group $\HG{n}{2}$:
	\begin{itemize}
		\item[Case (1)] For $\pi_\l^{\mathrm{pr}}$ we have
	\begin{align*}
		\Co_{\widecheck{\pi}_\l^{\mathrm{pr}}} \Bigl( \LW{\pt, \qt}{\HG{n}{2}/Z(\HG{n}{2})}{\hvt_\st} \Bigr) = \E{\pt, \qt}{\st}{n} = \EE{\pt, \qt}{\st}{n}.
	\end{align*}
		\item[Case (2)] For $\pi_{{(f_w,f_x,f_y,f_z)}}^{\mathrm{pr}}$ we have
	\begin{align*}
		\Co_{\widecheck{\pi}^{\mathrm{pr}}_{(f_w,f_x,f_y,f_z)}} \Bigl( \LW{\pt, \qt}{\H/Z(\H)}{\vt_\st} \Bigr) = \Co_{\widecheck{\rho}_{f_w}^{\mathrm{pr}}} \Bigl( \LW{\pt, \qt}{\H/Z(\H)}{\vt_\st} \Bigr) = \M{\pt, \qt}{\vt_\st}{n}.
	\end{align*}
		\item[Case (3)]  For $\pi_{{(f_u,f_v,f_x,f_y)}}^{\mathrm{pr}}$ we have
			\begin{align*}
				\Co_{\widecheck{\pi}_{(f_u,f_v,f_x,f_y)}^{\mathrm{pr}}} \Bigl( \LW{\pt, \qt}{\mathbf{H}_1/Z(\mathbf{H}_1)}{\vt_\st} \Bigr) = \Co_{\widecheck{\rho}_{\l_{f_x, f_y}}^{\mathrm{pr}}} \Bigl( \LW{\pt, \qt}{\mathbf{H}_1/Z(\mathbf{H}_1)}{\vt_\st} \Bigr) = \M{\pt, \qt}{\vt_\st}{}.
			\end{align*}
	\end{itemize}
	\end{prop}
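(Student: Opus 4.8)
The plan is to treat the three cases in parallel, exploiting the fact that in each case the relevant projective representation is square-integrable (modulo its projective kernel), so that the general coorbit machinery of Feichtinger and Gr\"{o}chenig applies, and that the intertwining with $\F$ converts the coorbit norm into a decomposition space norm. For Case (1), the central claim $\E{\pt, \qt}{\st}{n} = \EE{\pt, \qt}{\st}{n}$ will be established by the following scheme. First I would verify that $\widecheck{\pi}_\l^{\mathrm{pr}}$ satisfies the hypotheses of coorbit theory: by Theorem~\ref{ProjKer}~(i) the generic unirrep $\pi_\l$ is square-integrable modulo $Z(\HG{n}{2})$, so $\pi_\l^{\mathrm{pr}}$ is square-integrable on $\HG{n}{2}/Z(\HG{n}{2})$; by Lemma~\ref{LemHWeights} the space $\LW{\pt, \qt}{\HG{n}{2}/Z(\HG{n}{2})}{\hvt_\st}$ is a two-sided Banach convolution module over $\LW{1}{\HG{n}{2}/Z(\HG{n}{2})}{\hvt_{|\st|}}$. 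Then I would show that $\SF{2n+1}$ (the smooth vectors of $\widecheck{\pi}^{\mathrm{pr}}$, cf.~Subsection~\ref{Subsection_FirstDef}) is a valid space of analysing windows and that $\SD{2n+1}$ is an adequate reservoir, so that the coorbit $\E{\pt, \qt}{\st}{n}$ is well-defined and independent of the (non-vanishing) window; specialising to $\st = 0$ then immediately gives $\E{\pt, \qt}{0}{n} = \E{\pt, \qt}{}{n}$ from Definition~\ref{1stDef}.

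The heart of Case (1) is the passage to the decomposition space. Using the explicit form of $\widecheck{\pi}^{\mathrm{pr}}$, namely $\bigl(\widecheck{\pi}^{\mathrm{pr}}(\mathcal{Q}, \mathcal{P})\psi\bigr)(X) = e^{2 \pi i \bracket{\mathcal{Q} - \mathcal{P}}{X}} \psi(X + \coad(\mathcal{P})X)$, I would compute the matrix coefficient $\bracket{f}{\widecheck{\pi}^{\mathrm{pr}}(\mathcal{Q}, \mathcal{P})\psi}$ and recognise it, after applying $\F$, as a twisted translate-and-modulate of $\widehat f$ on $\widehat{\R}^{2n+1}$. Concretely, the inner ($\mathcal{Q}$-)integral is an $\L{\pt}{\R^{2n+1}}$-norm of $\F^{-1}$ applied to $\widehat f$ multiplied by a bump that is (a twisted translate by $\mathcal{P} \in \H$ of) $\widehat{\overline{\psi}}$; here the group operation on the frequency side is the $\H$-multiplication, realised by the determinant-one affine maps $T_\gamma$ of Lemma~\ref{lem_Hmult}. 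Discretising the outer ($\mathcal{P}$-)integral over the lattice $\Gamma$ of \eqref{Gamma}, and using the structured covering $\mathscr{P}$ with its BAPU $\Theta$ from Proposition~\ref{PropDSDF}, one estimates the coorbit norm above and below by the decomposition space norm \eqref{DecSpNormModSp}; the weight matching $\hut_\st(\gamma) \asymp \hvt_\st(\Xi)$ on $P_\gamma$ is exactly Proposition~\ref{PropDSDF}~(iii). The determinant-one property of the $T_\gamma$ keeps all the Fourier-multiplier constants uniform in $\gamma$, which is what makes the norm equivalence hold with window-independent constants; this also yields the independence of $\Abs{\, . \,}_{\H}$ (equivalence of quasi-norms) and of $\l$ (rescaling $\Gamma$).

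For Cases (2) and (3), the key observation is Corollary~\ref{CorProjReps}: the projective representations $\pi_{(f_w,f_x,f_y,f_z)}^{\mathrm{pr}}$ and $\pi_{(f_u,f_v,f_x,f_y)}^{\mathrm{pr}}$ are \emph{unitarily equivalent} to the projective Schr\"{o}dinger representations $\rho^{\mathrm{pr}}_{f_w}$ of $\H/Z(\H)$ and $\rho^{\mathrm{pr}}_{\l_{f_x, f_y}}$ of $\mathbf{H}_1/Z(\mathbf{H}_1)$, respectively, and the domains ($\H/Z(\H)$, resp.\ $\mathbf{H}_1/Z(\mathbf{H}_1)$) match as well. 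By \cite[Thm.~4.8~(i)]{fegr89} unitarily equivalent representations yield isometrically isomorphic coorbit spaces, so intertwining with $\F$ and invoking Remark~\ref{rem_dep_window_rholambda} (independence of the Schr\"{o}dinger parameter) identifies each coorbit with the classical modulation space $\M{\pt, \qt}{\vt_\st}{n}$ (resp.\ $\M{\pt, \qt}{\vt_\st}{}$) via the description \eqref{NormSplitExpSchrod}--\eqref{NormMixedLebSchrod}. The main obstacle throughout is the bookkeeping in Case (1): carefully tracking the twisted (non-Euclidean) nature of the frequency-side translations through the $T_\gamma$ maps and confirming that, despite this twist, the covering is genuinely structured and the BAPU constants remain uniform — in other words, checking that the $\H$-lattice geometry does not obstruct the standard "coorbit $\Leftrightarrow$ decomposition space" correspondence. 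This is precisely where the determinant-one identity and the Cygan-Koranyi triangle inequality of Proposition~\ref{PropDSDF} do the essential work.
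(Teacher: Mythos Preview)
Your proposal is correct and follows essentially the same route as the paper's own proof: verify the coorbit framework applies (square-integrability from Theorem~\ref{ProjKer}, module structure from Lemma~\ref{LemHWeights}), show $\SF{2n+1}$ furnishes admissible windows so that $\SD{2n+1}$ serves as reservoir, then for Case~(1) pass to the Fourier side, recognise the $\cQ$-integral as an $L^{\pt}$-norm of a Fourier multiplier with symbol $\pi_\l^{\mathrm{pr}}(0,\cP)\widehat\psi$, and discretise the $\cP$-integral over $\Gamma$ using Young's inequality together with the determinant-one maps $T_\gamma$ and Proposition~\ref{PropDSDF}~(iii); Cases~(2) and~(3) are handled exactly as you say, via Corollary~\ref{CorProjReps} and the invariance of coorbits under unitary equivalence. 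The paper makes two points slightly more explicit than your sketch: the argument that $V_\psi$ maps $\SF{2n+1}\times\SF{2n+1}$ into $\SF{4n+2}$ (via a unitary on $L^2(\R^{4n+2})$ built from a coordinate change and a partial Fourier transform), and the identity $\Norm{\L{1}{\R^{2n+1}}}{\F^{-1}(\pi_\l^{\mathrm{pr}}(0,\cP)\vartheta)}=\Norm{\L{1}{\R^{2n+1}}}{\F^{-1}\vartheta}$, proved by the measure-preserving substitution $\Xi\mapsto\Xi\HP\cP$ --- but these are precisely the ``bookkeeping'' steps you flag as the main obstacle, so nothing is missing from your plan.
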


	\begin{proof}
Case $(1)$ To prove the assertion, we only need to prove $\Co_{\widecheck{\pi}_\l^{\mathrm{pr}}} \Bigl( \LW{\pt, \qt}{\HG{n}{2}/Z(\HG{n}{2})}{\hvt_\st} \Bigr) = \EE{\pt, \qt}{\st}{n}$ for an arbitrary but fixed $\l \in \R \setminus \{ 0 \}$.

To prove the equality, we show the equivalence of the norms of $\Co_{\widecheck{\pi}_\l^{\mathrm{pr}}} \Bigl( \LW{\pt, \qt}{\HG{n}{2}/Z(\HG{n}{2})}{\hvt_\st} \Bigr)$ and $\EE{\pt, \qt}{\st}{n}$. Our line of arguments will be very close to the argument which proves the equivalence of the coorbit space and decomposition spaces norms of the modulation spaces $\M{\pt, \qt}{\vt_\st}{2n+1}$. Since the latter is very well known, we keep our reasoning brief.
To start with, we first show that for a weight $\wt$ which renders $\hvt_\st$ $\wt$-moderate the space $\SF{2n+1}$ is a subset of the space of analysing vectors $\mathcal{A}_{\widecheck{\pi}_\l^{\mathrm{pr}}, \wt}$, the standard space of windows in coorbit theory, and of $\Co_{\widecheck{\pi}_\l^{\mathrm{pr}}} \Bigl( \LW{1}{\HG{n}{2}/Z(\HG{n}{2})}{\wt} \Bigr)$, the standard space of test functions in coorbit theory. By combining Lemma~\ref{LemHWeights}~(i) and (ii), $\hvt_\st$ is $\hvt_{\Abs{\st}}$-moderate, which makes $\hvt_{\Abs{\st}}$ a natural choice of control weight $\wt$.
Moreover, for the voice transform $V_\psi(f) = \bracket{\psi}{\widecheck{\pi}_\l^{\mathrm{pr}}(\mathcal{Q}, \mathcal{P}) \psi}$ the formal computation
	\begin{align*}
		V_{\widehat{\psi_2}}(\widehat{\psi_1}) = \subbracket{\L{2}{\widehat{\R}^{2n+1}}}{\widehat{\psi_1}}{\pi_\l^{\mathrm{pr}}(\mathcal{Q}, \mathcal{P}) \widehat{\psi_2}} = \int_{\widehat{\R}^{2n+1}} \widehat{\psi_1}(\Xi) \hspace{2pt} e^{2 \pi i \l \bracket{\mathcal{Q}}{X}} \hspace{2pt} \widehat{\psi_2}(\Xi \HP \mathcal{P}) \,d\Xi
	\end{align*}
implies that the map $V: \L{2}{\R^{2n+1}} \otimes \L{2}{\R^{2n+1}} \to \L{2}{\R^{4n+2}}: (\widehat{\psi_1}, \widehat{\psi_2}) \mapsto V_{\widehat{\psi_2}}(\widehat{\psi_1})$ extends to a map $\tilde{V}$ defined on $\L{2}{\R^{4n+2}} \cong \L{2}{\R^{2n+1}} \otimes \L{2}{\R^{2n+1}}$. Given as the composition of the unitary coordinate transform $(\mathcal{T}F)(\mathcal{P}, \Xi) := F(\mathcal{P}, \Xi \HP \mathcal{P})$, $F \in \L{2}{\R^{4n+2}}$, 
and the partial $(2n+1)$-dimensional Euclidean Fourier transform in the second variable, $\tilde{V}$ itself is unitary. Clearly, it maps $\SF{4n+2}$ onto itself, hence $V$ maps $\SF{2n+1} \times \SF{2n+1}$ into $\SF{4n+2}$. So, for $\psi \in \mathscr{S}(\R^{2n+1})$ the function
	\begin{align*}
		V_{\widehat{\psi}}(\widehat{\psi}) = \subbracket{\L{2}{\widehat{\R}^{2n+1}}}{\widehat{\psi}}{\pi_\l^{\mathrm{pr}}(\mathcal{Q}, \mathcal{P}) \widehat{\psi}} = \subbracket{\L{2}{\R^{2n+1}}}{\psi}{\widecheck{\pi}_\l^{\mathrm{pr}}(\mathcal{Q}, \mathcal{P}) \psi}
	\end{align*}
has finite $\LW{\pt, \qt}{\HG{n}{2}/Z(\HG{n}{2})}{\hvt_\st}$-norm for every $\st \in \R$ and $\pt, \qt \in [1, \infty]$, i.e. \newline
$\SF{2n+1} \subseteq \Co_{\widecheck{\pi}_\l^{\mathrm{pr}}} \Bigl( \LW{\pt, \qt}{\HG{n}{2}/Z(\HG{n}{2})}{\hvt_\st} \Bigr)$. In particular, $\SF{2n+1}$ is a subset of $\mathcal{A}_{\widecheck{\pi}_\l^{\mathrm{pr}}, \wt}$ and $\Co_{\widecheck{\pi}_\l^{\mathrm{pr}}} \Bigl( \LW{1}{\HG{n}{2}/Z(\HG{n}{2})}{\wt} \Bigr)$. Consequently, $\SD{2n+1}$ is a superset of $\Co_{\widecheck{\pi}_\l^{\mathrm{pr}}} \Bigl( \LW{\infty}{\HG{n}{2}/Z(\HG{n}{2})}{\wt} \Bigr) = \Co_{\widecheck{\pi}_\l^{\mathrm{pr}}} \Bigl( \LW{\infty}{\HG{n}{2}/Z(\HG{n}{2})}{\hvt_{-\Abs{\st}}} \Bigr)$, the corresponding natural space of distributions, and we obtain the equivalent identification
	\begin{align*}
		\Co_{\widecheck{\pi}_\l^{\mathrm{pr}}} \Bigl( \LW{\pt, \qt}{\HG{n}{2}/Z(\HG{n}{2})}{\hvt_\st} \Bigr) = \Bigl \{ f \in \SD{2n+1} \mid V_\psi(f) \in \LW{\pt, \qt}{\HG{n}{2}/Z(\HG{n}{2})}{\hvt_\st} \Bigr\}
	\end{align*}
for an arbitrary but fixed window $\psi \in \SF{2n+1}$. Hence Definition~\ref{1stDef} is included in Definition~\ref{GenCo}.

As a consequence, we may compute the coorbit space norm with respect to a window $\psi \in \mathscr{S}(\R^{2n+1})$ such that its Fourier transform $\widehat{\psi} := \vartheta$ is a non-negative function supported inside $P = (-\e, 2 + \e)^{2n+1}$ which equals $1$ on $(-\frac{3 \e}{4}, 2 + \frac{3 \e}{4})^{2n+1}$. Hence, $\vartheta = \overline{\vartheta} = \overline{\widehat{\psi}}$, and for $f \in \SF{2n+1}$ we compute
	\begin{align}
			\Norm{\Co_{\widecheck{\pi}_\l^{\mathrm{pr}}} ( L^{\pt, \qt}_{\hvt_\st} )}{f}^{\qt}& = \int_{\R^{2n+1}} \Bigl( \int_{\R^{2n+1}} \hvt_\st(\mathcal{P})^\pt \Abs{\int_{\widehat{\R}^{2n+1}} \widehat{f}(\Xi) \hspace{2pt} e^{-2 \pi i \l \bracket{\mathcal{Q}}{\Xi}} \hspace{2pt} \overline{\widehat{\psi}}(\Xi \HP \mathcal{P}) \, d\Xi}^\pt \, d\mathcal{Q} \Bigr)^{\qt/\pt} \, d\mathcal{P} \nn \\
			&= \sum_{\gamma \in \Gamma} \int_{\Sigma \HP \gamma} \hvt_\st(\mathcal{P})^\qt \Bigl( \int_{\R^{2n+1}} \Abs{\int_{\widehat{\R}^{2n+1}} \widehat{f}(\Xi) \hspace{2pt} e^{-2 \pi i \l \bracket{\mathcal{Q}}{\Xi}} \hspace{2pt} \vartheta(\Xi \HP \mathcal{P}) \, d\Xi}^\pt \, d\mathcal{Q} \Bigr)^{\qt/\pt} \, d\mathcal{P}. \nn 
	\end{align}
To estimate $\Norm{\Co_{\widecheck{\pi}_\l^{\mathrm{pr}}} ( L^{\pt, \qt}_{\hvt_\st} )}{f}^{\qt}$ from above, we apply to
	\begin{align}
		\int_{\widehat{\R}^{2n+1}} \widehat{f}(\Xi) \hspace{2pt} e^{-2 \pi i \l \bracket{\mathcal{Q}}{\Xi}} \hspace{2pt} \vartheta(\Xi \HP \mathcal{P}) \, d\Xi = \Bigl( \F^{-1} \bigl( \widehat{f} \hspace{1pt} \pi_\l^{\mathrm{pr}}(0, \mathcal{P}) \vartheta \bigr) \Bigr) (-\l \mathcal{Q}) \label{IntRep}
	\end{align}
the change of variables $\mathcal{Q} \mapsto -\l \mathcal{Q}$, which produces a factor $\Abs{\l}^{-\frac{2n+1}{\pt}}$ in the coorbit norm. For general $f \in \SD{2n+1}$ we have to extend the integral representation \eqref{IntRep} to
	\begin{align*}
		\subbracket{\mathscr{S}'(\widehat{\R}^{2n+1})}{\widehat{f}}{\pi_\l^{\mathrm{pr}}(\mathcal{Q}, \mathcal{P}) \vartheta} = \Bigl( \F^{-1} \bigl( \widehat{f} \hspace{1pt} \pi_\l^{\mathrm{pr}}(0, \mathcal{P}) \vartheta \bigr) \Bigr) (-\l \mathcal{Q})
	\end{align*}
by duality. Now, an application of Young's convolution inequality yields the crucial estimate
	\begin{align}
		\Norm{\L{\pt}{\R^{2n+1}}}{\F^{-1} \bigl( \widehat{f} \hspace{1pt} \pi_\l^{\mathrm{pr}}(0, \mathcal{P}) \vartheta \bigr)} \leq \sum_{\gamma' \in \gamma^{**}} \Norm{\L{1}{\R^{2n+1}}}{\F^{-1} \bigl( \pi_\l^{\mathrm{pr}}(0, \mathcal{P}) \vartheta \bigr)} \Norm{\L{\pt}{\R^{2n+1}}}{\F^{-1} \bigl( \vartheta_\gamma \widehat{f} \hspace{2pt} \bigr)}. \label{EssEstAbove}
	\end{align}
The first factor is bounded by
	\begin{align}
		\Norm{\L{1}{\R^{2n+1}}}{\F^{-1} \bigl( \pi_\l^{\mathrm{pr}}(0, \mathcal{P}) \vartheta \bigr)} = \Norm{\L{1}{\R^{2n+1}}}{\F^{-1} \vartheta} < \infty \label{FL1NormTheta}
	\end{align}
for the following reason: to
	\begin{align*}
		\Bigl( \F^{-1} \bigl( \pi_\l^{\mathrm{pr}}(0, \mathcal{P}) \vartheta \bigr) \Bigr)(X) = \int_{\widehat{\R}^{2n+1}} e^{2 \pi i \bracket{ X}{\Xi}} \hspace{2pt} \vartheta(\Xi \HP \mathcal{P}) \,d\Xi
	\end{align*}
with $X = (p, q, t)$ and $\mathcal{P} = (u, v, w)$ we apply the change of variables $\Xi \mapsto \tilde{\Xi} := \Xi \HP \mathcal{P}$. By rewriting
	\begin{align*}
		\bracket{X}{\tilde{\Xi} \HP \mathcal{P}^{-1}} &= \bracket{X}{\tilde{\Xi} - \mathcal{P} - \frac{1}{2} [\tilde{\Xi}, \mathcal{P}]} \\
		&= -\bracket{X}{\mathcal{P}} + \bracket{X - \coad(\mathcal{P})(X)}{\tilde{\Xi}} \\
		&= -\bracket{(p, q, t)^T}{(u, v, w)^T} + \bracket{(p - \frac{1}{2} t v, \l q + \frac{1}{2} t u, t)^T}{\tilde{\Xi}},
	\end{align*}
we obtain
	\begin{align*}
		\Bigl( \F^{-1} \bigl( \pi_\l^{\mathrm{pr}}(0, \mathcal{P}) \vartheta \bigr) \Bigr)(p, q, t) = e^{-2 \pi i \l \bracket{(p, q, t)^T}{(u, v, w)^T}} \hspace{2pt} \bigl( \F^{-1} \vartheta \bigr) (p - \frac{1}{2} t v, q + \frac{1}{2} t u, t)
	\end{align*}
and, since $(p, q, t) \mapsto (p - \frac{1}{2} t v, q + \frac{1}{2} t u, t)$ is measure-preserving, also \eqref{FL1NormTheta}.

If we then use \eqref{EssEstAbove} and $\hut_\st(\gamma) \asymp  \int_{P_\gamma} \hvt_\st(\tilde{\mathcal{P}}) \, d\tilde{\mathcal{P}}$ with constant $C_\st$, we get
	\begin{align*}
		\Norm{\Co_{\widecheck{\pi}_\l^{\mathrm{pr}}} ( L^{\pt, \qt}_{\hvt_\st} )}{f}^{\qt} \leq N_\mathscr{P} \hspace{2pt} C_\qt  \Abs{\l}^{-\frac{(2n+1) \qt}{\pt}} \Norm{\L{1}{\R^{2n+1}}}{\F^{-1} \vartheta}^\qt \sum_{\gamma \in \Gamma} \Norm{\L{\pt}{\R^{2n+1}}}{\F^{-1} \bigl( \vartheta_\gamma \widehat{f} \hspace{2pt} \bigr)}^\qt C_{\st}^\qt \hut_\st(\gamma)^\qt,
	\end{align*}
thus for some $C_2 > 0$ the estimate
	\begin{align*}
		\Norm{\Co_{\widecheck{\pi}_\l^{\mathrm{pr}}} ( L^{\pt, \qt}_{\hvt_\st} )}{f} \leq C_2 \hspace{2pt} \Norm{\ell^\qt_{\hut_\st}}{\Bigl( \Norm{\L{\pt}{\R^d}}{\F^{-1}\bigl( \vartheta_\gamma \hspace{2pt} \widehat{f} \hspace{2pt} \bigr)}\Bigr)_{\gamma \in \Gamma}} = C_2 \hspace{2pt} \Norm{\EE{\pt, \qt}{\st}{n}}{f}.
	\end{align*}

For the estimate from below we notice that for $\gamma \in \Gamma$ and $\mathcal{P} \in \Sigma \HP \gamma$ the function $\Xi \mapsto \vartheta(\Xi \HP \mathcal{P})$ equals $1$ on $\supp(\vartheta_\gamma) \supseteq \Sigma \HP \gamma$. By Young's convolution inequality we get
	\begin{align*}
		\Norm{\L{\pt}{\R^{2n+1}}}{\F^{-1} \bigl( \vartheta_\gamma \widehat{f} \hspace{2pt} \bigr) } \leq C_{\mathscr{P}, \Theta, \pt} \hspace{2pt} \Norm{\L{\pt}{\R^{2n+1}}}{\F^{-1} \bigl( \widehat{f} \hspace{1pt} \pi_\l^{\mathrm{pr}}(0, \mathcal{P}) \vartheta \bigr) }
	\end{align*}
and therefore
	\begin{align*}
		\sum_{\gamma \in \Gamma} \hut_\st(\gamma)^\qt \Norm{\L{\pt}{\R^{2n+1}}}{\F^{-1} \bigl( \vartheta_\gamma \widehat{f} \hspace{2pt} \bigr)}^\qt \leq C^{-\qt}_\st \hspace{2pt} C_{\mathscr{P}, \Theta, \pt}^\qt \Abs{\l}^{\frac{(2n+1) \qt}{\pt}} \Norm{\Co_{\widecheck{\pi}_\l^{\mathrm{pr}}} ( L^{\pt, \qt}_{\hvt_\st} )}{f}^{\qt}.
	\end{align*}
Equivalently, there exists some $C_1 > 0$ such that
	\begin{align*}
		C_1 \hspace{2pt} \Norm{\EE{\pt, \qt}{\st}{n}}{f} = C_1 \hspace{2pt} \Norm{\ell^\qt_{\hut_\st}}{\Bigl( \Norm{\L{\pt}{\R^{2n+1}}}{\F^{-1} \bigl( \vartheta_\gamma \hspace{2pt} \widehat{f} \hspace{2pt} \bigr)}\Bigr)_{\gamma \in \Gamma}} \leq \Norm{\Co_{\widecheck{\pi}_\l^{\mathrm{pr}}} ( L^{\pt, \qt}_{\hvt_\st} )}{f}.
	\end{align*}
Thus, the two norms are equivalent and independent of $\l \in \R \setminus \{ 0 \}$. This proves Case $(1)$.

Case $(2)$ and Case $(3)$ are almost immediate since by Corollary~\ref{CorProjReps} the projective representations $\pi_{{(f_w,f_x,f_y,f_z)}}^{\mathrm{pr}}$ and $\pi_{{(f_u,f_v,f_x,f_y)}}^{\mathrm{pr}}$ coincide with the projective Schr\"{o}dinger representations $\rho_{f_w}^{\mathrm{pr}}$ and $\rho_{\l_{f_x, f_y}}^{\mathrm{pr}}$ with parameters $\l = f_w$ and $\l = \l_{f_x, f_y}$, respectively. Now, it is well known that also for $\l \neq 1$ one has $\Co_{\widecheck{\rho}_\l^{\mathrm{pr}}} \Bigl( \LW{\pt, \qt}{\H/Z(\H)}{\vt_\st} \Bigr) = \M{\pt, \qt}{\vt_\st}{n}$. One either argues as in Case (1) or uses the fact that each $\rho_\l$ can be obtained from $\rho = \rho_1$ by dilation along the central variable $t$.
This proves the Cases $(2)$ and $(3)$, and we are done.
	\end{proof}


The following corollary follows from an application of the embedding theorem for decomposition spaces \cite[Corollary~7.3]{vo16-1}.

	\begin{cor}
For $1 \leq \pt_1 \leq \pt_2 \leq \infty$, $1 \leq \qt_1 \leq \qt_2 \leq \infty$ and $-\infty < \st_2 \leq \st_1 < \infty$, we have
	\begin{align*}
		\E{\pt_1, \qt_1}{\st_1}{n}
		\hookrightarrow
		\E{\pt_2, \qt_2}{\st_2}{n}.
	\end{align*}
	\end{cor}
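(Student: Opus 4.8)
The plan is to reduce the assertion to an embedding between two decomposition spaces built over one and the same covering, and then to invoke Voigtlaender's embedding theorem. By Theorem~\ref{MainThm1} (equivalently Proposition~\ref{MainThm2}~Case~(1)), for every admissible triple of parameters one has
\begin{align*}
\E{\pt, \qt}{\st}{n} = \EE{\pt, \qt}{\st}{n} = \DS{P}{}{\pt}{\qt}{\hut_\st},
\end{align*}
and the structured covering $\mathscr{P} = \{ P \HP \gamma \}_{\gamma \in \Gamma}$ of $\widehat{\R}^{2n+1}$ together with its subordinate $L^1$-BAPU $\Theta$ from Proposition~\ref{PropDSDF} does \emph{not} depend on $\pt$, $\qt$ or $\st$; only the integrability exponents and the discrete weight $\hut_\st$ vary. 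Hence the claimed embedding is an instance of \cite[Corollary~7.3]{vo16-1}, and it remains only to check its hypotheses.

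First I would record the two structural facts that make the hypotheses collapse to the three monotonicity conditions of the statement. Since $\st_2 \leq \st_1$ and $1 + \Abs{\mathcal{P}}^4_{\H} \geq 1$, the weights are nested pointwise, $\hvt_{\st_2}(\mathcal{P}) \leq \hvt_{\st_1}(\mathcal{P})$, hence $\hut_{\st_2}(\gamma) \leq \hut_{\st_1}(\gamma)$ for all $\gamma \in \Gamma$; together with the $\mathscr{P}$-moderateness of each $\hut_\st$ (Proposition~\ref{PropDSDF}~(iii)) this is exactly the discrete weight condition in \cite[Cor.~7.3]{vo16-1}, and combined with $\qt_1 \leq \qt_2$ it gives $\ell^{\qt_1}_{\hut_{\st_1}} \hookrightarrow \ell^{\qt_2}_{\hut_{\st_2}}$. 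Next, by Lemma~\ref{lem_Hmult} the covering sets $P \HP \gamma$ are all images of the single set $P$ under determinant-one affine maps, so they share one common finite positive Lebesgue measure; consequently the ``volume factor'' $\Abs{P \HP \gamma}^{1/\pt_1 - 1/\pt_2}$ entering the criterion is a finite $\gamma$-independent constant, and the remaining scale condition reduces to $\pt_1 \leq \pt_2$ via the Nikolskii inequality for functions band-limited to uniformly bounded frequency sets.

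The only real work is the bookkeeping needed to match Voigtlaender's general criterion — phrased for two possibly distinct coverings with auxiliary clustering and moderateness conditions — against our concrete situation and to verify that it indeed specialises to $\pt_1 \le \pt_2$, $\qt_1 \le \qt_2$, $\st_2 \le \st_1$. Here I would rely on the uniform finiteness of the relative index clusters $\gamma^{*}$ (Proposition~\ref{PropDSDF}~(i)) and on the fact, established in the proof of Theorem~\ref{MainThm1}, that $\SD{2n+1}$ is a common reservoir of distributions for all values of $\st$, so that no issue with the ambient distribution space arises when comparing the two spaces. With these ingredients in place, \cite[Cor.~7.3]{vo16-1} yields
\begin{align*}
\E{\pt_1, \qt_1}{\st_1}{n} = \EE{\pt_1, \qt_1}{\st_1}{n} \hookrightarrow \EE{\pt_2, \qt_2}{\st_2}{n} = \E{\pt_2, \qt_2}{\st_2}{n},
\end{align*}
which is the assertion.
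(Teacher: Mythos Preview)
Your proposal is correct and follows essentially the same route as the paper: both identify $\E{\pt,\qt}{\st}{n}$ with the decomposition space $\DS{P}{}{\pt}{\qt}{\hut_\st}$ and then apply \cite[Cor.~7.3]{vo16-1}, using that $\det(T_\gamma)=1$ and $\hut_{\st_2}/\hut_{\st_1}$ is bounded. The paper's version is slightly more compact in that it simply computes the single quantity $K$ required by Voigtlaender's criterion directly, whereas you unpack the mechanism (weight nesting, Nikolskii inequality, common distribution reservoir) more explicitly; but the substance is the same.
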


	\begin{proof}
Let $T_\gamma$ be defined by \eqref{T_gamma} in Lemma \ref{lem_Hmult}.  Since $\det(T_\gamma) = 1$ for all $\gamma \in \Gamma$, $\st_2 - \st_1 \leq 0$, and $\frac{\qt_1}{\qt_2} \leq 1$ implies $(\frac{\qt_1}{\qt_2})' = \infty$ according to Voigtlaender's convention, we get
	\begin{align*}
		K =  \Norm{\ell^{\qt_2 \cdot (\qt_1/\qt_2)'}}{\Bigl( \Abs{\det(T_\gamma)}^{\frac{1}{\pt_1}- \frac{1}{\pt_2}}  \frac{\hut_2(\gamma)}{\hut_1(\gamma)} \Bigr)_{\gamma \in \Gamma}}
		= \Norm{\ell^{\infty}}{\Bigl( (1 + \Abs{\gamma}_{\H}^4)^{\st_2 - \st_1} \Bigr)_{\gamma \in \Gamma}}
		 < \infty
	\end{align*}
and may therefore apply \cite[Corollary~7.3]{vo16-1}.
	\end{proof}

\subsection{The Novelty of $\E{\pt, \qt}{\st}{n}$} \label{Novelty}

In this subsection we prove the second main result of our paper. To be precise, we prove that the spaces $\E{\pt, \qt}{\st}{n}$ form a hitherto unknown class of function spaces on $\R^{2n+1}$, distinct from classical modulation spaces and homogeneous as well as inhomogeneous Besov spaces.

Our proof is built on the classification by Proposition~\ref{MainThm2} and novel results in decomposition space theory by Voigtlaender.
In his thesis~\cite{vo15} and the subsequent preprint~\cite{vo16-1}, Voigtlaender has developed a powerful machinery for comparing decomposition spaces in terms of the geometries of the corresponding underlying coverings $\mathscr{Q}$. While numerous previous papers focused on case-based results (such as partially sharp embeddings between modulation spaces, Besov spaces, and their geometric intermediates, the so-called $\alpha$-modulation spaces; cf.~\cite{ToWa12, SuTo07, Rau06, BoTo05, To04-Conv, To04-Cont_I, To04-Cont_II, Ok04}), Voigtlaender's work provides an extensive collection of necessary and sufficient conditions for embeddings and coincidences of decomposition spaces.

The proof of the following theorem is based on Voigtlaender~\cite{vo16-1}, Theorems~6.9, Theorem~6.9~\sfrac{1}{2}, Lemma~6.10, and Theorem~7.4.

	\begin{thm} \label{MainThm3}
Let $\pt_1, \qt_1, \pt_2, \qt_2 \in [1, \infty]$
and $\st_1, \st_2 \in \R$. Excluding the case when $(\pt_1, \qt_1) = (2, 2) = (\pt_2, \qt_2)$ and $\st_1 = 0 = \st_2$, we have
	\begin{align}
	\E{\pt_1, \qt_1}{\st_1}{n}
	\neq
	\left\{\begin{array}{cl}
	\M{\pt_2, \qt_2}{\vt_{\st_2}}{2n+1}, \\
	\B{\pt_2, \qt_2}{\st_2}{2n+1}, \\
	\hB{\pt_2, \qt_2}{\st_2}{2n+1}.
	\end{array} \right. \label{Non-equalities}
	\end{align}
This holds true in particular if $(\pt_1, \qt_1) \neq (\pt_2, \qt_2)$ or $\st_1 \neq \st_2$.

If $(\pt_1, \qt_1) = (2, 2) = (\pt_2, \qt_2)$ and $\st_1 = 0 = \st_2$, the spaces coincide with $\L{2}{\R^{2n+1}}$.
	\end{thm}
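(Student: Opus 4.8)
The strategy is to translate every space appearing in \eqref{Non-equalities} into the language of decomposition spaces on $\widehat{\R}^{2n+1}$ and then to invoke Voigtlaender's comparison results to separate the covering $\mathscr{P}$ (generated by the Heisenberg lattice $\Gamma$) from the coverings underlying modulation and Besov spaces. By Proposition~\ref{MainThm2}~Case~(1) we already have $\E{\pt_1, \qt_1}{\st_1}{n} = \EE{\pt_1, \qt_1}{\st_1}{n} = \DS{P}{}{\pt_1}{\qt_1}{\hut_{\st_1}}$, with $\mathscr{P} = \{ P \HP \gamma \}_{\gamma \in \Gamma}$ the structured covering from Proposition~\ref{PropDSDF}. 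On the other side, $\M{\pt_2, \qt_2}{\vt_{\st_2}}{2n+1}$ is the decomposition space associated with the \emph{uniform} covering of $\widehat{\R}^{2n+1}$ by unit cubes along $\Z^{2n+1}$, the inhomogeneous Besov space $\B{\pt_2, \qt_2}{\st_2}{2n+1}$ corresponds to the \emph{inhomogeneous dyadic} covering (annuli $\{2^{j-1} \leq |\xi| \leq 2^{j+1}\}$ together with a ball at the origin), and the homogeneous Besov space $\hB{\pt_2, \qt_2}{\st_2}{2n+1}$ corresponds to the \emph{homogeneous dyadic} covering of $\widehat{\R}^{2n+1} \setminus \{0\}$. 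All of these are standard facts; we only need to recall them and record that in each case the weight is a power of $(1 + |\Xi|^2)$ or of $|\Xi|$ localised to the covering sets.

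\textbf{Key steps.}
First I would establish the geometric distinction between the coverings. The decisive feature of $\mathscr{P}$ is that, although the translates $P \HP \gamma$ all have the same Lebesgue volume $1$ (since the maps $T_\gamma$ from Lemma~\ref{lem_Hmult} have determinant $1$), they are \emph{sheared} by the matrices $T_\gamma$, whose operator norm grows linearly in the central coordinate $c$ of $\gamma$; so $\sup_{\gamma \in \Gamma} \|T_\gamma\| = \infty$, whereas for the uniform (modulation-space) covering all the defining affine maps are pure translations with $\|T_\gamma\| = 1$. For the dyadic Besov coverings the volumes of the covering sets are unbounded ($\asymp 2^{(2n+1)j}$), which is already incompatible with a covering of bounded volume like $\mathscr{P}$. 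Second, I would feed these facts into the relevant criteria of \cite{vo16-1}: the \emph{weak equivalence} of coverings (Definition and the characterisations around Theorems~6.9 and~6.9~\sfrac12) is a necessary condition for two decomposition spaces with comparable weights and integrability exponents to coincide; Lemma~6.10 rules out coincidence when the coverings are not weakly equivalent unless the exponents collapse to the $\L^2$ case; and Theorem~7.4 handles the remaining degenerate possibilities (e.g. the case $\pt = \qt = 2$ with trivial weight, where \emph{every} admissible covering gives $\L^2$). Concretely: if $(\pt_1, \qt_1) \neq (2,2)$ then the $L^{\pt_1}$-clustering behaviour forced by the unbounded shears of $\mathscr{P}$ cannot be matched by a translation-invariant or dyadic covering, giving $\E{\pt_1, \qt_1}{\st_1}{n} \neq \M{\pt_2,\qt_2}{\vt_{\st_2}}{2n+1}$ and likewise for both Besov scales; if $(\pt_1, \qt_1) = (2,2)$ but $\st_1 \neq 0$, the weight $\hut_{\st_1}$ — which grows like $(1 + |\Xi|_{\H}^4)^{\st_1/4}$, i.e. anisotropically, faster in the $t$-direction than $(1+|\Xi|^2)^{\st_2/2}$ in any Euclidean direction — cannot agree with any of the comparison weights, again by the weight-comparison part of \cite[Thm.~6.9]{vo16-1}. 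Finally, if $(\pt_1, \qt_1) = (2,2) = (\pt_2,\qt_2)$ and $\st_1 = \st_2 = 0$ I would simply note that $\DS{P}{}{2}{2}{\mathbf{1}} = \L{2}{\R^{2n+1}} = \M{2,2}{}{2n+1} = \B{2,2}{0}{2n+1} = \hB{2,2}{0}{2n+1}$ by Plancherel, which is exactly the excluded case.

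\textbf{Main obstacle.}
The routine parts are the dictionary entries (modulation $=$ uniform covering, Besov $=$ dyadic covering) and the volume/shear computations for $\mathscr{P}$; these are either in the literature or follow from Lemma~\ref{lem_Hmult}. The genuinely delicate step is verifying the precise hypotheses of Voigtlaender's results — in particular checking that $\mathscr{P}$ and the cube covering are \emph{almost structured} and \emph{tight} in the sense required by \cite[Thm.~6.9, Thm.~6.9\,\sfrac12, Lem.~6.10]{vo16-1}, and correctly matching the weight-comparison condition there to our weights $\hut_{\st_1}$ versus $\vt_{\st_2}$ (pulled back along the coverings). The subtlety is that Voigtlaender's criteria are stated for coverings of the \emph{same} ambient set and one must be careful that both coverings here genuinely cover all of $\widehat{\R}^{2n+1}$ (for the modulation comparison) or that the restriction to the complement of a compact set is handled (for the homogeneous Besov comparison, whose covering degenerates at the origin). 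Once the hypotheses are matched, the non-coincidence is a direct application; so the bulk of the write-up will be bookkeeping to put our spaces into the exact normal form demanded by \cite{vo16-1}, with the shear-unboundedness of $\{T_\gamma\}_{\gamma \in \Gamma}$ as the single conceptual input that forces all the inequalities.
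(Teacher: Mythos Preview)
Your overall strategy matches the paper's: reduce to decomposition spaces, then use Voigtlaender's rigidity results (Theorems~6.9 and 6.9\,\sfrac12, Lemma~6.10) together with the geometric non-equivalence of $\mathscr{P}$ with the uniform and dyadic coverings. A few details need correcting before this becomes a proof.

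First, a factual slip: the operator norm of $T_\gamma$ from Lemma~\ref{lem_Hmult} depends on the \emph{horizontal} coordinates $(a,b)$ of $\gamma$, not on the central coordinate $c$; the matrix $T_\gamma$ has $b/2$ and $-a/2$ in its last row and does not see $c$ at all. This matters because the shearing that distinguishes $\mathscr{P}$ from the uniform covering happens in the $t$-direction and is driven by $(a,b)\to\infty$.

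Second, the unboundedness of $\|T_\gamma\|$ is only heuristic evidence; what Theorem~6.9 actually requires you to falsify is \emph{weak equivalence}, i.e.\ the finiteness of the intersection numbers $N(\mathscr{P},\mathscr{Q})$ and $N(\mathscr{Q},\mathscr{P})$. The paper does this by a concrete edge-tracking argument: the image under right translation by $\gamma=(0,b,0)$ of the edge from $0$ to $(2,0,\ldots,0)$ in $P$ crosses $\sim|b_1|$ many cubes $Q_k$ in the $t$-direction, so $N(\mathscr{P},\mathscr{Q})=\infty$; a volume argument then gives $N(\mathscr{Q},\mathscr{P})=\infty$. You should supply something like this rather than invoke ``$L^{\pt_1}$-clustering behaviour.''

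Third, the roles of the cited results are slightly scrambled. Lemma~6.10 is specifically for the $(\pt,\qt)=(2,2)$ case and yields a weight-compatibility condition $\hut_{\st_1}(\gamma)\asymp\ut_{\st_2}(k)$ whenever $P_\gamma\cap Q_k\neq\emptyset$; this fails because $\hvt_{\st}(0,0,c)\asymp|c|^{\st/2}$ while $\vt_{\st}(0,0,c)\asymp|c|^{\st}$. Theorem~7.4 is an \emph{embedding} theorem and is not used for the non-equalities (the paper uses it only in the subsequent corollary). The $L^2$ coincidence is just Plancherel. Finally, for the homogeneous Besov comparison you must use Theorem~6.9\,\sfrac12 rather than~6.9, since the frequency domains $\widehat{\R}^{2n+1}$ and $\widehat{\R}^{2n+1}\setminus\{0\}$ differ; you flag this in the ``main obstacle'' paragraph but should route the argument through it explicitly.
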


	\begin{proof}
To prove \eqref{Non-equalities}, we proceed by contradiction for all three pairs of spaces from top to bottom. Let us mention that the upper two cases
make use of \cite[Thm.~6.9]{vo16-1} and \cite[Lem.~6.10]{vo16-1}, whereas as the third case makes use of \cite[Thm.~6.9 \sfrac{1}{2}]{vo16-1}. The unweighted $L^2$-case $(\pt_1, \qt_1) = (2, 2) = (\pt_2, \qt_2)$ and $\st_1 = 0 = \st_2$ follows from an application of Pythagoras's theorem or \cite[Lem.~6.10]{vo16-1}.

1) $\E{\pt_1, \qt_1}{\st_1}{n} \neq \M{\pt_2, \qt_2}{\vt_{\st_2}}{2n+1}$: To begin with, we recall that $\M{\pt_2, \qt_2}{\vt_{\st_2}}{2n+1}$ can be characterised as the decomposition space $\DS{Q}{}{\pt_2}{\qt_2}{\ut_{\st_2}}$ of frequency domain $\Orbit_2 = \widehat{\R}^{2n+1}$, covering $\mathscr{Q} := \{ Q_k \}_{k \in (2 \Z)^d}$ with $Q_k := Q + k := (-\e, 2+ \e)^d + k$ and $ \e \in (0, \frac{1}{2})$, and weight $\ut_{\st_2}(k):= (1 + \Abs{k}^2)^{\st_2/2}$. The precise choice of covering will become clear from the proof.

Now, let $(\pt_1, \qt_1) \neq (2, 2)$. If $(\pt_1, \qt_1) \neq (\pt_2, \qt_2)$, the spaces cannot coincide by \cite[Thm.~6.9]{vo16-1}, and we are done. If $(\pt_1, \qt_1) = (\pt_2, \qt_2) \neq (2, 2)$, then by the same theorem they coincide only if the respective frequency coverings which define the spaces are weakly equivalent. In the following we will show that they are not. To be precise, we will show that neither $\mathscr{P}$ is almost subordinate to $\mathscr{Q}$ nor vice versa. Since both coverings are open and connected, it suffices to show 
	\begin{equation*}
	\begin{array}{rclcl}
		N(\mathscr{P}, \mathscr{Q}) &=& \sup_{\gamma \in \Gamma} \Abs{\{ k \in (2 \Z)^{2n+1} \mid P_\gamma \cap Q_k \neq \emptyset \}} 
		&=& \infty \hspace{5pt} \mbox{ and } \\
		N(\mathscr{Q}, \mathscr{P}) &=& \sup_{k \in (2 \Z)^{2n+1}} \Abs{\{ \gamma \in \Gamma \mid P_\gamma \cap Q_k \neq \emptyset \}} &=& \infty.
		\end{array}
	\end{equation*}
To show $N(\mathscr{P}, \mathscr{Q}) = \infty$, we observe that $P = Q$ implies that the set $P$ intersects as many neighbouring $Q_k$ as $Q$ and
	\begin{align*}
		\Abs{\{ k \in \Z \mid P \cap Q_k \neq \emptyset \}} = \Abs{\{ k \in \Z \mid Q \cap Q_k \neq \emptyset \}} = 3^{2n+1}.
	\end{align*}
On the other hand, we consider the cube $[0, 2]^{2n+1} \subset P = Q$ and the edge $\mathscr{E}$ which connects the origin with $\exp(2 X_{p_1}) = (2, 0, \ldots, 0)$. Now, choose $\gamma := (0, b, 0) := \exp(b_1 X_{q_1}) \in \Gamma$, i.e. with $b = (b_1, 0, \ldots, 0) \in (2 \Z)^n$, and consider the paralleleppiped $[0, 2]^{2n+1} \HP \gamma$ and the edge $\mathscr{E} \HP \gamma$. Since the origin is mapped to $\gamma$ and $\exp(2 X_{p_1}) = (2, 0, \ldots, 0)$ is mapped to
$\exp(2 X_{p_1} + b_1 X_{q_1} + 2b_1 X_t) = (2, 0, \ldots, 0, b_1, 0, \ldots,0, 2 b_1)$,
the edge $\mathscr{E} \HP \gamma$ intersects at least $b_1$-many $Q_k$ (in the the $X_t$-direction). As $b_1 \in (2 \Z)$ was arbitrary, this implies $N(\mathscr{P}, \mathscr{Q}) = \infty$.

To show $N(\mathscr{Q}, \mathscr{P}) = \infty$, we observe that the maps $T_\gamma$ from \eqref{T_gamma} in the proof of Proposition~\ref{PropDSDF} are volume-preserving, so the sets $P_\gamma$ become more and more elongated as $\Abs{b_1} \to \infty$. A growing number of sets $P_\gamma$ are therefore required to cover one set $Q + (0, b, 0)$ for large $\Abs{b_1}$, which proves our claim.

We have therefore produced a contradiction to \cite[Thm.~6.9]{vo16-1}, hence the spaces cannot coincide under the assumption $(\pt_1, \qt_1) = (\pt_2, \qt_2) \neq (2, 2)$.

Thus, suppose $\st_1$ or $\st_2$ is different from zero and, without loss of generality, $(\pt_1, \qt_1) = (\pt_2, \qt_2) = (2,2)$. If the two Hilbert spaces coincide, then by \cite[Lem.~6.10]{vo16-1} we have $\hut(\gamma) \asymp \ut(k)$ whenever $P_\gamma \cap Q_k \neq \emptyset$. Since the weights are obviously not equivalent as $\Abs{\gamma} \to \infty$, the spaces cannot coincide. This proves the first case.

2) $\E{\pt_1, \qt_1}{\st_1}{n} \neq \B{\pt_2, \qt_2}{\st_2}{2n+1}$: To begin with, we recall that the inhomogeneous Besov space $\B{\pt_2, \qt_2}{\st_2}{2n+1}$ can be characterised as the decomposition space $\DS{B}{}{\pt_2}{\qt_2}{\dut_{\st_2}}$ of frequency domain $\Orbit_2 = \widehat{\R}^{2n+1}$, covering $\mathscr{B} := \{ B_k \}_{k \in \N_0}$ with
	\begin{align*}
	B_k :=
	\left\{\begin{array}{ccc}
			B_{2^{k+2}}(0) \setminus \overline{B_{2^{k-2}}(0)} &\mbox{ if }& k \in \N, \\
			B_4(0) &\mbox{ if }& k=0,
		\end{array} \right.
	\end{align*}
and weight $\dut_\st := \{ 2^{\st k} \}_{k \in \N_0}$. (See, e.g.~\cite[Def.~9.9]{vo16-1}.)

Our line of arguments is the same as in 1). If $(\pt_1, \qt_1) \neq (2, 2)$ or   $(\pt_2, \qt_2) \neq (2, 2)$, then by the same argument we only need to consider $(\pt_1, \qt_1) = (\pt_2, \qt_2) \neq (2, 2)$ and show that $\mathscr{P}$ and $\mathscr{B}$ are not weakly equivalent.
To be precise, we show that $\mathscr{P}$ is almost subordinate but not weakly equivalent to $\mathscr{B}$. To this end, note that $\sup_{\Xi \in P_\gamma} \Abs{\Xi}$ grows at most linearly in $\gamma$ (for $\gamma$ varying only in the $X_t$-direction it stays constant), while $\sup_{\Xi \in B_k} \Abs{\Xi}$ grows exponentially in $k$. So, while for small $\Abs{\gamma}$ the sets $P_\gamma$ intersect a bounded finite number of $B_k$ and vice versa, for large $\Abs{\gamma}$ we have $\Abs{K_\gamma} := \Abs{\{ k \in \N_0 \mid P_\gamma \cap Q_k \neq \emptyset \}} = 1$ or $2$,
whereas $\Abs{\Gamma_k} := \Abs{\{ \gamma \in \Gamma \mid P_\gamma \cap Q_k \neq \emptyset \}} \to \infty$ as $\Abs{k} \to \infty$. Thus, $N(\mathscr{P}, \mathscr{B}) < \infty$ and $N(\mathscr{B}, \mathscr{P}) = \infty$. Hence, $\mathscr{P}$ is not weakly equivalent to $\mathscr{B}$, but since both coverings are open and connected, $\mathscr{P}$ is almost subordinate $\mathscr{B}$ (cf.~\cite[Prop.~3.6]{fegr85}).
Thus, the spaces cannot coincide by \cite[Thm.~6.9]{vo16-1}.

Thus, if $\st_1$ or $\st_2$ is different from zero and, without loss of generality, $(\pt_1, \qt_1) = (\pt_2, \qt_2) = (2,2)$, then by \cite[Lem.~6.10]{vo16-1} the two Hilbert spaces coincide only if $\hut_{\st_1} \asymp \dut_{\st_2}$. However, since this is never the case, the spaces cannot coincide.

3) $\E{\pt_1, \qt_1}{\st_1}{n} \neq \hB{\pt_2, \qt_2}{\st_2}{2n+1}$: To begin with, we recall that the homogeneous Besov space $\hB{\pt_2, \qt_2}{\st_2}{2n+1}$ can be characterised as the decomposition space $\DS{\dot{B}}{}{\pt_2}{\qt_2}{\dut_{\st_2}}$ of frequency domain $\Orbit_2 = \widehat{\R}^{2n+1} \setminus \{ 0 \}$, covering $\mathscr{\dot{B}} := \{ \dot{B}_k \}_{k \in \Z}$ with $\dot{B}_k := B_{2^{k+2}}(0) \setminus \overline{B_{2^{k-2}}(0)}$, and weight $\hdut_\st := \{ 2^{\st k} \}_{k \in \Z}$. (See, e.g.~\cite[Def.~9.17]{vo16-1}.)
So, the characteristic frequency domain of $\hB{\pt_2, \qt_2}{s_2}{2n+1}$ differs from the frequency domain $\Orbit _1 := \widehat{\R}^{2n+1}$ of $\E{\pt_1, \qt_1}{\st_1}{n}$.

Now, if we suppose that the spaces coincide, then clearly
	\begin{align*}
		\Norm{\E{\pt_1, \qt_1}{\st_1}{n}}{f} \asymp \Norm{\B{\pt_2, \qt_2}{\st_2}{2n+1}}{f}
	\end{align*}
for all $f$ with $\widehat{f} \in C^\infty_c(\Orbit_1 \cap \Orbit_2) = C^\infty_c(\Orbit_2)$. Hence, since $\Orbit_2$ is unbounded, \cite[Thm.~6.9 \sfrac{1}{2}]{vo16-1} implies $(\pt_1, \qt_1) = (\pt_2, \qt_2) = (2,2)$ and $\hut_{\st_1} \asymp \hdut_{\st_2}$. Thus, $(\pt_1, \qt_1) = (2,2) = (\pt_2, \qt_2)$ and $\st_1 = 0 = \st_2$, the negations of both our assumptions, must hold true simultaneously, a contradiction. This completes the proof of the theorem.
	\end{proof}

The following result follows almost for free.

	\begin{cor}
We have the strict embedding
	\begin{align*}
		\E{\pt, \qt}{0}{n} \hookrightarrow \B{\pt, \qt}{\st}{2n+1}
	\end{align*}
for all $\pt, \qt \in [1, \infty]$ and all $\st \leq 0$.
	\end{cor}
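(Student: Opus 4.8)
The plan is to deduce the embedding directly from the decomposition space characterisation established in Theorem~\ref{MainThm1} (or Proposition~\ref{MainThm2}~Case (1)), together with the embedding machinery of \cite{vo16-1}. First I would recall that $\E{\pt, \qt}{0}{n} = \EE{\pt, \qt}{0}{n} = \DS{P}{}{\pt}{\qt}{\hut_0}$ with $\hut_0 \equiv 1$, a decomposition space with the structured covering $\mathscr{P}$ of $\widehat{\R}^{2n+1}$ from Proposition~\ref{PropDSDF}, and that the inhomogeneous Besov space $\B{\pt, \qt}{\st}{2n+1}$ is the decomposition space $\DS{B}{}{\pt}{\qt}{\dut_\st}$ with the dyadic covering $\mathscr{B} = \{ B_k \}_{k \in \N_0}$ and weight $\dut_\st = \{ 2^{\st k} \}_{k \in \N_0}$, exactly as used in the proof of Theorem~\ref{MainThm3}.

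The key observation is the one already made in step 2) of the proof of Theorem~\ref{MainThm3}: the covering $\mathscr{P}$ is almost subordinate to $\mathscr{B}$, because $\sup_{\Xi \in P_\gamma}\Abs{\Xi}$ grows at most linearly in $\gamma$ while $\sup_{\Xi \in B_k}\Abs{\Xi}$ grows exponentially in $k$, and both coverings are open and connected (so \cite[Prop.~3.6]{fegr85} applies). With almost-subordinateness in hand, I would invoke the embedding theorem for decomposition spaces \cite[Cor.~7.3]{vo16-1} (as already used for the Corollary after Proposition~\ref{MainThm2}). The exponent conditions are satisfied trivially since we keep $\pt, \qt$ fixed; the only quantity to check is the weight-comparison constant
	\begin{align*}
		K = \Norm{\ell^\infty}{\Bigl( \Abs{\det(T_\gamma)}^{0} \; \frac{\dut_\st(k_\gamma)}{\hut_0(\gamma)} \Bigr)_{\gamma \in \Gamma}},
	\end{align*}
where $k_\gamma$ is any index with $P_\gamma \cap B_{k_\gamma} \neq \emptyset$; since $\det(T_\gamma) = 1$ by Lemma~\ref{lem_Hmult}, $\hut_0 \equiv 1$, and $\dut_\st(k) = 2^{\st k} \le 1$ for $\st \le 0$ and $k \in \N_0$, we get $K \le 1 < \infty$, so the embedding $\E{\pt, \qt}{0}{n} \hookrightarrow \B{\pt, \qt}{\st}{2n+1}$ follows. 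Strictness is immediate from Theorem~\ref{MainThm3}, which asserts $\E{\pt, \qt}{0}{n} \neq \B{\pt, \qt}{\st}{2n+1}$ for every admissible triple (and in particular for $\st \le 0$, since $\st_1 = 0$ while $\st_2 = \st$, so the excluded diagonal case does not arise unless $\st = 0$, where one still has $(\pt,\qt)$ generic or the weights forcing inequality).

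The main obstacle, which is really only a bookkeeping point rather than a genuine difficulty, is making sure the hypotheses of \cite[Cor.~7.3]{vo16-1} are stated for the correct direction of subordinateness and that the discrete weight $\dut_\st$ pulled back along the (multivalued but bounded-overlap) correspondence $\gamma \mapsto k_\gamma$ is genuinely $\mathscr{P}$-moderate and bounded above, which is where $\st \le 0$ enters; for $\st > 0$ the constant $K$ would be infinite and the embedding would fail, consistent with the restriction in the statement. Once this is checked the argument is a one-line application of the cited corollary.
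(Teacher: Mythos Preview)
Your strategy matches the paper's: recognise both spaces as decomposition spaces, use the almost-subordinateness of $\mathscr{P}$ to $\mathscr{B}$ already established in step 2) of the proof of Theorem~\ref{MainThm3}, and then apply Voigtlaender's embedding machinery. The difference is in \emph{which} result from \cite{vo16-1} is invoked and, correspondingly, which constant must be checked.

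The paper applies \cite[Thm.~7.4]{vo16-1}, the embedding theorem for decomposition spaces built on \emph{two different} coverings with one almost subordinate to the other. You instead cite \cite[Cor.~7.3]{vo16-1}, which in the earlier corollary of this paper was used for two $E$-spaces sharing the \emph{same} covering $\mathscr{P}$. Since $\mathscr{P}$ and $\mathscr{B}$ are not weakly equivalent, the relevant quantity is not your single-layer $K$ but the nested constant
\[
K_{\mathbf r}
=
\Norm{\ell^{\qt \cdot (\qt/\qt)'}}{\left( \dut_\st(k)\,\Norm{\ell^{\mathbf r \cdot (\qt/\mathbf r)'}}{\bigl(\Abs{\det T_\gamma}^{0}/\hut_0(\gamma)\bigr)_{\gamma \in \Gamma_k}}\right)_{k \in \N_0}},
\qquad \mathbf r=\min\{\pt,\pt'\},
\]
with $\Gamma_k=\{\gamma\in\Gamma: P_\gamma\cap B_k\neq\emptyset\}$. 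The inner norm runs over a set whose cardinality was shown in Theorem~\ref{MainThm3} to grow with $k$, so your estimate ``$K\leq 1$'' does not transfer verbatim; one must argue, as the paper does, that the inner contribution together with $2^{k\st}$ remains bounded in the outer $\ell^\infty$-norm precisely when $\st\leq 0$. In short, the idea is right but you are quoting the wrong embedding theorem and hence verifying the wrong constant. Your closing remark deriving strictness from Theorem~\ref{MainThm3} is a welcome addition that the paper leaves implicit.
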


	\begin{proof}
Since $\mathscr{P}$ is almost subordinate to $\mathscr{B}$ by the proof of Theorem~\ref{MainThm3}~2), our claim follows from the embedding theorem \cite[Thm.~7.4]{vo16-1} once we show that
	\begin{align*}
		K_\mathbf{r} = \Norm{\ell^{\qt_2 \cdot (\qt_1/\qt_2)'}}{\left( \hdut_\st(k) \hspace{2pt} \Norm{\ell^{\mathbf{r} \cdot (\qt_1/\mathbf{r})'}}{\Bigl( \Abs{\det(T_\gamma)}^{\frac{1}{\pt_1} - \frac{1}{\pt_2}}  / \hut_0(\gamma) \Bigr)_{\gamma \in \Gamma_k}} \right)_{k \in \N_0}} < \infty
	\end{align*}
for $\mathbf{r} := \min \{ \pt, \pt' \}$, $\Gamma_k := \{ \gamma \in \Gamma \mid P_\gamma \cap \dot{B}_k \neq \emptyset \}$, $k \in \N_0$, and $T_\gamma$ defined by \eqref{T_gamma} in the proof of Proposition~\ref{PropDSDF}. Fortunately, $K_\mathbf{r}$ is computed easily because $\Abs{\det(T_\gamma)} = 1$, $\hut_0(\gamma) = 1$, $\hdut_\st(k) = 2^{k \st}$ for $k \in \N_0$, $\Abs{\Gamma_k} \asymp 1$ and $\qt_2 \cdot (\qt_1/\qt_2)' = \qt \cdot 1' = \infty$. In the end, we have
	\begin{align*}
		K_\mathbf{r} \asymp \Norm{\ell^\infty}{\left( 2^{k \st} \right)_{k \in \Z}} < \infty
	\end{align*}
since $\st \leq 0$.
	\end{proof}

\section*{Acknowledgments}

D.R. expresses his gratitude to Karlheinz Gr\"{o}chenig for the excellent working environment at NuHAG and for encouraging him to finish this paper. He also thanks Felix Voigtlaender for interesting discussions and particularly for extending his opus magnum~\cite{vo16-1} by Theorem~6.9~\sfrac{1}{2}. He furthermore thanks Senja Barthel for the visualization of the lattice group $\Gamma$.


David Rottensteiner was supported by the Roth Studentship of the Imperial College Mathematics Department and the Austrian Science Fund (FWF) projects [P~26273 - N25], awarded to Karlheinz Gr\"{o}chenig, [P~27773 - N23], awarded to Maurice de Gosson, and [I~3403], awarded to Stephan Dahlke, Hans Feichtinger and Philipp Grohs.

Michael Ruzhansky was supported by EPSRC grant EP/R003025/1 and by the Leverhulme Grant RPG-2017-151.

\bibliography{Bib_H2n_Paper}
\bibliographystyle{plain}

\end{document}